\newcommand{\R}{\mathbb{R}}
\newtheorem{theorem}{Theorem}[section]
\newtheorem{corollary}{Corollary}[section]
\newtheorem{lemma}[theorem]{Lemma}
\newtheorem{proposition}[theorem]{Proposition}
\theoremstyle{definition}
\newtheorem{remark}{Remark}[section]
\def\var{\varepsilon}
\newcommand{\gr}[1]{{
#1}}
\title{Asymptotic profile in selection-mutation equations: Gauss versus Cauchy distributions}
\author[A. Calsina]{\`Angel Calsina}
\address[A.C.]{Departament de Matemàtiques, Universitat Autònoma de Barcelona, 08193 Bellaterra (Barcelona), Spain}
\email{acalsina@mat.uab.cat}
\author[S. Cuadrado]{S\'ilvia Cuadrado}
\address[S.C.]{Departament de Matemàtiques, Universitat Autònoma de Barcelona, 08193 Bellaterra (Barcelona), Spain}
\email{silvia@mat.uab.cat}
\author[L. Desvillettes]{Laurent Desvillettes}
\address[L.D.]{Univ. Paris Diderot, Sorbonne Paris Cit\'e, Institut de Math\'ematiques de Jussieu - Paris Rive Gauche, UMR 7586, CNRS, Sorbonne Universit\'es, UPMC Univ. Paris 06, F-75013, Paris, France.}
\email{desvillettes@math.univ-paris-diderot.fr}
\author[G. Raoul]{Ga\"el Raoul}
\address[G.R.]{CMAP, Ecole Polytechnique, CNRS, Université Paris-Saclay, Route de Saclay, 91128 Palaiseau cedex, France.}
\email{raoul@cmap.polytechnique.fr}
\begin{document}

\maketitle

\begin{abstract}
In this paper, we study the asymptotic (large time) behavior of a selection-mutation-competition
model for a population structured with respect to a phenotypic trait, when the rate of mutation is very small. We assume that the reproduction is asexual, and that the mutations can be described by a linear integral operator. We are interested in the interplay between the time variable $t$ and the rate $\var$ of mutations. We show that depending on $\alpha > 0$, the limit $\var \to 0$ with $t = \var^{-\alpha}$ can lead
to population number densities
which are either Gaussian-like (when $\alpha$ is \gr{small}) or Cauchy-like
(when $\alpha$ is \gr{large}).
\end{abstract}

\bigskip

\section{Introduction}

\subsection{Selection-mutation-competition models}\label{subsec:sel-mut-comp}

\gr{The phenotypic diversity of a species impacts its ability to evolve. In particular, the importance of the variance of the population along a phenotypic trait is illustrated by the \emph{fundamental theorem of natural selection} \cite{Fisher}, and the \emph{breeder's equation} \cite{Lush}: the evolution speed of a population along a one dimensional fitness gradient (or under artificial selection) is proportional to the variance of the initial population. Recently, the phenotypic variance of populations has also come to light as an important element to describe the evolutionary dynamics of ecosystems (where many interacting species are considered) \cite{Violle,Bolnick,Vellend}.}

%

\medskip


\gr{Over the last decade, the thematic of \emph{Evolutionary Rescue} has emerged as an important question \cite{Bell,Carlson,Gonzales} (see also the seminal work of Luria and Delbr\"uck \cite{Luria}), and led to a new interest in the phenotypic distribution of populations, beyond phenotypic variance}. Evolutionary Rescue is concerned with a population living in an environment that changes suddenly. The population will survive either if some individuals in the population carry an unusual trait that turns out to be successful in the new environment, or if new mutants able to survive in the new environment appear before the population goes extinct (see \cite{Martin} for a discussion on the relative effect of \emph{de novo mutations} and \emph{standing variance} in Evolutionary Rescue). In any case, the fate of the population will not be decided by the properties of the bulk of its density, but rather by the properties of the tail of the initial distribution of the populations, close to the favourable traits for the new environment. A first example of such problem comes from emerging disease \cite{Gandon}: Animal infections sometimes are able to infect humans. This phenomena, called zoonose, is the source of many human epidemics: HIV, SARS, Ebola, MERS-CoV, etc. A zoonose may happen if a pathogen that reaches a human has the unusual property of being adapted \gr{to this new human host.} A second example comes from the emergence of microbes resistant to an antimicrobial drug that is suddenly spread in the environment of the microbe. This second phenomenon can easily be tested experimentally \cite{Bell,Toprak}, and \gr{has major public health implications} \gr{\cite{Canton}}.


\gr{Most papers devoted to the genetic diversity of populations structured by a continuous phenotypic trait describe the properties of mutation-selection equilibria. It is however also interesting to describe the genetic diversity of population that are not at equilibrium (\emph{transient dynamics}):} pathogen populations for instance are often in transient situations, either invading a new host, or being eliminated by the immune system. We refer to \cite{Hastings} for a review on transient dynamics in ecology. For asexual populations \gr{structured by a continuous phenotypic trait}, several models exist, corresponding to different biological assumptions \cite{Champagnat}. If the mutations are modeled by a diffusion, the steady populations \gr{(for a model close to \eqref{eqq0}, but where mutations are modelled by a Laplacian)} are Gaussian distributions \cite{Kimura, Burger}. \gr{Furthermore, \cite{Alfaro,Coville} have considered some transient dynamics for this model. In the model that we will consider (see \eqref{eqq0}), the mutations are modelled by a non-local term. It was shown in \cite{Burger2} (see also \cite{Burger}) that mutation-selection equilibria are then Cauchy profiles (under some assumptions), and this result has been extended to more general mutation kernels in \cite{Calsina}, provided that the mutation rate is small enough.} Finally, let us notice that the case of sexual population is rather different, since recombinations by themselves can imply that a \emph{mutation-recombination equilibrium} exists, even without selection. We refer to the infinitesimal model \cite{Bulmer}, and to \cite{Turelli} for some studies on the phenotypic distribution of sexual species in a context close to the one presented here for asexual populations.

 \medskip

In this article, we consider a population consisting of individuals structured \gr{by} a quantitative phenotypic trait $x \in I$ ($I$ open interval of $\R$ containing $0$), and denote by $f : = f(t,x) \ge 0$ its density. Here, the trait $x$ is fully inherited by the offspring (if no mutation occurs), so that $x$ is indeed rather a breeding value than a phenotypic trait (see \cite{Mather}). We assume that the individuals reproduce with a rate $1$, and die at a rate
\[x^{2} + \int_{I}f(t,y)\,\mbox{d}y.\]
This means that the individuals with trait $x=0$ are those who are best adapted to their environment,
and that the fitness decreases like a parabola around this optimal trait (this is expected in the surroundings
of a trait of maximal fitness). It also means that the strength of the competition modeled by the logistic term
is identical for all traits. When an individual of trait $x\in I$ gives birth, we assume that the offspring will have the trait $x$ with probability $1-\varepsilon$, and a different trait $x'$ with probability $\var\in(0,1)$. $\var$ is then the probability that a mutation affects the phenotypic trait of the offspring. We can now define the growth rate of the population of trait $x$ (that is the difference between the rate of \emph{births without mutation}, minus the death rate) as
$$ r_\var(t,x) = 1-\varepsilon -x^{2} - \int_{I}f(t,y)\,\mbox{d}y. $$
When a mutation affects the trait of the offspring, we assume that the trait $x'$ of the mutated offspring is drawn from a law over the set of phenotypes $I\subset \mathbb R$ with a density $\gamma := \gamma (x)\in L^1(I)$. The function $\gamma$ then satisfies
\[\gamma(x)\ge 0,\quad \int_I \gamma(x)\, dx = 1,\]
and we assume moreover that $\gamma$ is bounded, $C^{1}$, with bounded derivative and strictly positive on $I$. The main assumption here is that the law of the trait of a mutated offspring does not depend of the trait of its parent. This classical assumption, known as \emph{house of cards} is not the most realistic, but it can be justified when the mutation rate is small \cite{Burger} (see also \cite{Calsina}\gr{)}. All in all, we end up with the following equation:
\begin{equation}
\label{eqq0}
 \frac{\partial f_\var(t,x)}{\partial t}= r_\var(t,x) \, f_\var(t,x) +\varepsilon \,\gamma(x)\,\int_{I}f_\var(t,y)\,\mbox{d}y.
\end{equation}
This paper is devoted to the study of the asymptotic behaviour of
the solutions of equation \eqref{eqq0} when $\varepsilon$ is small
and $t$ large and it is organized as follows. In the rest of Section
1 the main results are quoted, first in an informal way, and then as
rigourous statements. Section 2 contains the proof of Theorem
\ref{theorem1} and its corollary and finally, in Section 3, Theorem
\ref{thm:smallt} is proved.

\subsection{Asymptotic study of the model}\label{subsec:asymptotics}

When we consider the solutions of \eqref{eqq0}, two particular profiles naturally appear:
\begin{itemize}
\item \emph{A Cauchy profile:} For a given mutation rate $\var>0$ small enough, one expects that $f_\var(t,x)$ will converge, as $t$ goes to infinity, to the unique steady-state of \eqref{eqq0}, wich is the following Cauchy profile
\begin{equation}
\label{eqq2}
f_\var(\infty,x) :=  \frac{\var\, \gamma(x)\, \mathcal I_\var(\infty)}{\mathcal I_\var(\infty) - (1-\var) + x^2} ,
\end{equation}
where $\mathcal I_\var(\infty)$ is such that $\int_I f_\var(\infty,x)\,dx=\mathcal I_\var(\infty)$. 
This steady-state of \eqref{eqq0} is the so-called \emph{mutation-selection equilibrium} of the \emph{House of cards} model \eqref{eqq0}, which has been introduced in \cite{Burger2} (we also refer to \cite{Burger} for a broader presentation of existing results).
\smallskip

\item \emph{A Gaussian profile:} If $\var=0$, the solution of (\ref{eqq0}) can be written
\begin{equation}
\label{eqq1}
 f_0(t,x) = f(0,x) \, e^{- \int_0^t \mathcal I_0(s)\, ds + t - t\,x^2} ,
 \end{equation}
where $\mathcal I_0(t):=\int_I f_0(t,x)\,dx$, so that a Gaussian-like behavior (with respect to $x$) naturally appears in this case. Surprisingly, we are not aware of any reference to this property in the population genetics literature.
\end{itemize}

\medskip

We will show that, as suggested by the above arguments, we can describe the phenotypic distribution of the population, that is $x\mapsto f_\var(t,x)$, when either $t\gg 1$ (large time for a given mutation rate $\varepsilon>0$), or $0\leq\var\ll 1$ (small mutation rate, for a given time interval $t\in [0,T]$). Before providing the precise statements of our results (see Subsection~\ref{subsec:rigorous}), we will briefly describe them here, and illustrate them with numerical simulations. The numerical simulations presented in Fig.~\ref{fig1} and Fig.~\ref{fig2} are obtained thanks to a finite difference scheme (explicit Runge-Kutta in time), and we illustrate our result with a single simulation of \eqref{eqq0} with $\var=10^{-2}$, $I=[-3/2,3/2]$, $\gamma(x)=\frac 1{40\pi}e^{\frac{-x^2}{20}}$ and $f_\varepsilon(0,x)=\Gamma_2(\var,x-1)$ (see the definition of $\Gamma_2$ in eq. (\ref{eqq4}) below). The initial condition corresponds to a population at the mutation-selection equilibrium which environment suddenly changes (the optimal trait originally in $x=1$ moves to $x=0$ at $t=0$). This example is guided by the Evolutionary Rescue experiments described in Subsection~\ref{subsec:sel-mut-comp}, where the sudden change is obtained by the addition of e.g. salt or antibiotic to a bacterial culture.

%

\medskip

We describe two phases of the dynamics of the population:

\begin{itemize}
\item \emph{Large time: Cauchy profile.} We show that $f_\var(t,x)$ is asymptotically (when the mutation rate $\var>0$ is small) close to
\begin{equation}
\label{eqq4}
 \Gamma_2(\var,x) =\frac{\var\, \gamma(0)}{\gamma(0)^2\pi^2\,\var^2 + x^2},
\end{equation}
provided $t\gg \var^{-4}$\gr{. The population is then a time-independent Cauchy distribution for large times}. This theoretical result is coherent with \gr{numerical} results: we see in Fig.~\ref{fig1} that $f_\var(t,\cdot)$ is well described by $\Gamma_2(\var,\cdot)$, as soon as $t\geq 10^5$, which is confirmed by the value of $\|f_\var(t,\cdot)-\Gamma_2(\var,\cdot)\|_{L^1(I)}$ for $t\geq 10^5$ given by Fig.~\ref{fig2}.
\medskip

\item \emph{Short time: Gaussian profile.} We also show that $f_\var(t,x)$ is asymptotically (when the mutation rate $\var>0$ is small) close to
\begin{equation}
\label{eqq3}
 \Gamma_1(t,\var,x) = \frac{f(0,x)\, \sqrt t }{f(0,0)\int_I e^{-x^2}\,dx}e^{-x^2 \,t},
\end{equation}
provided $1\ll t\ll \var^{-2/3}$\gr{. The population has then a Gaussian-type distribution for} short (but not too short) times. This theoretical result is coherent with \gr{numerical} results: we see in Fig.~\ref{fig1} that $f_\var(t,\cdot)$ is well described by $\Gamma_1(t,\var,\cdot)$ for $t\in[10^2,10^4]$, which is confirmed by the value of $\|f_\var(t,\cdot)-\Gamma_2(\var,\cdot)\|_{L^1(I)}$ for $t\in[10^2,10^4]$ given by Fig.~\ref{fig2}.
\end{itemize}
%

%
%

\begin{figure}[tbp]
\begin{center}
\includegraphics[scale=0.3]{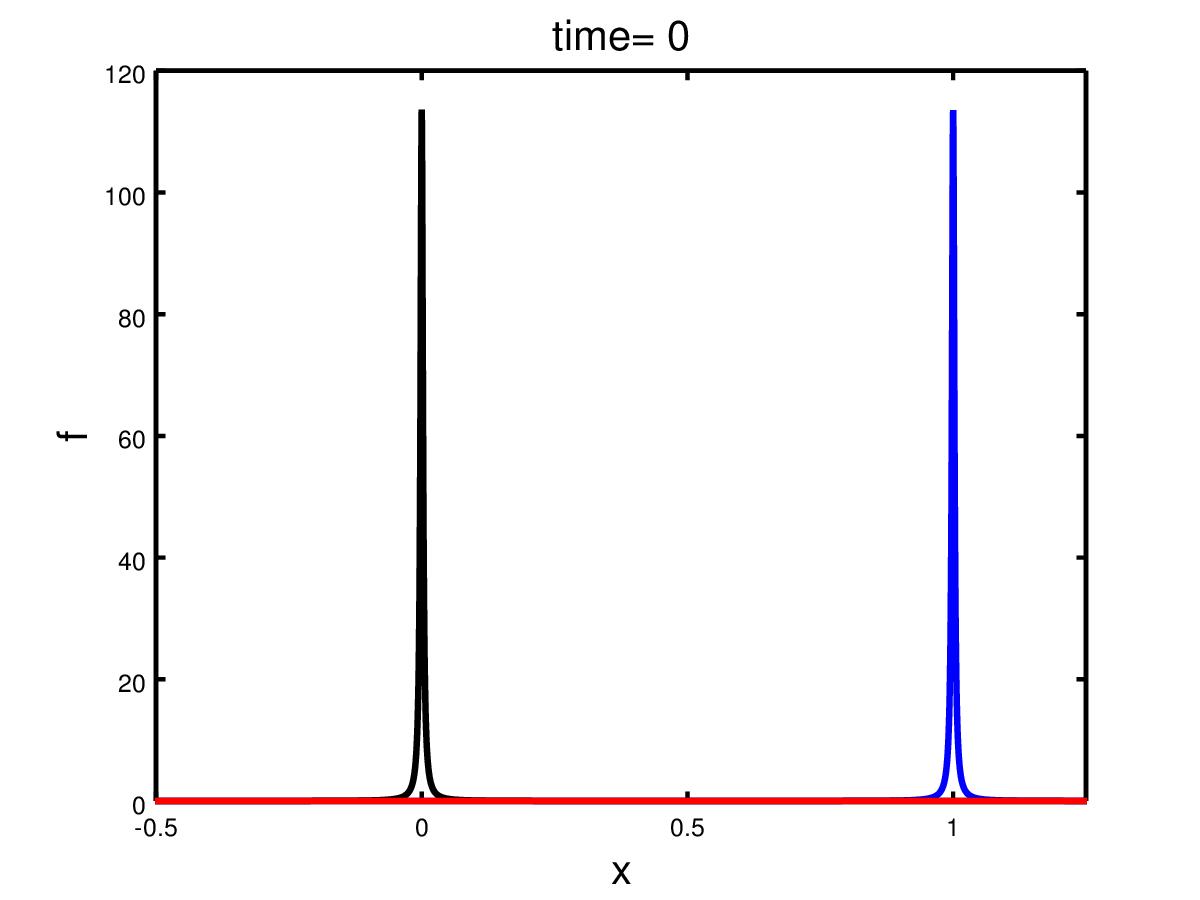}\quad \includegraphics[scale=0.3]{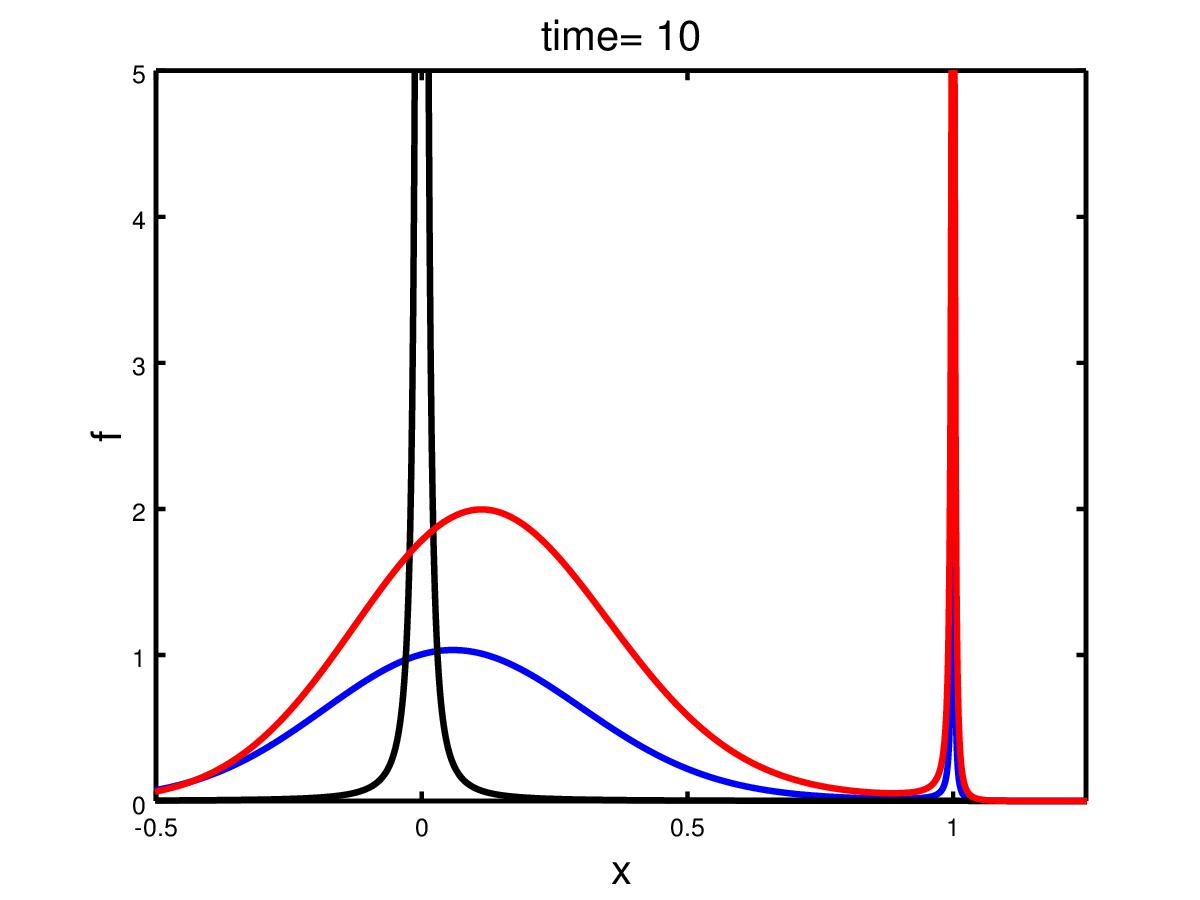}\quad
\includegraphics[scale=0.3]{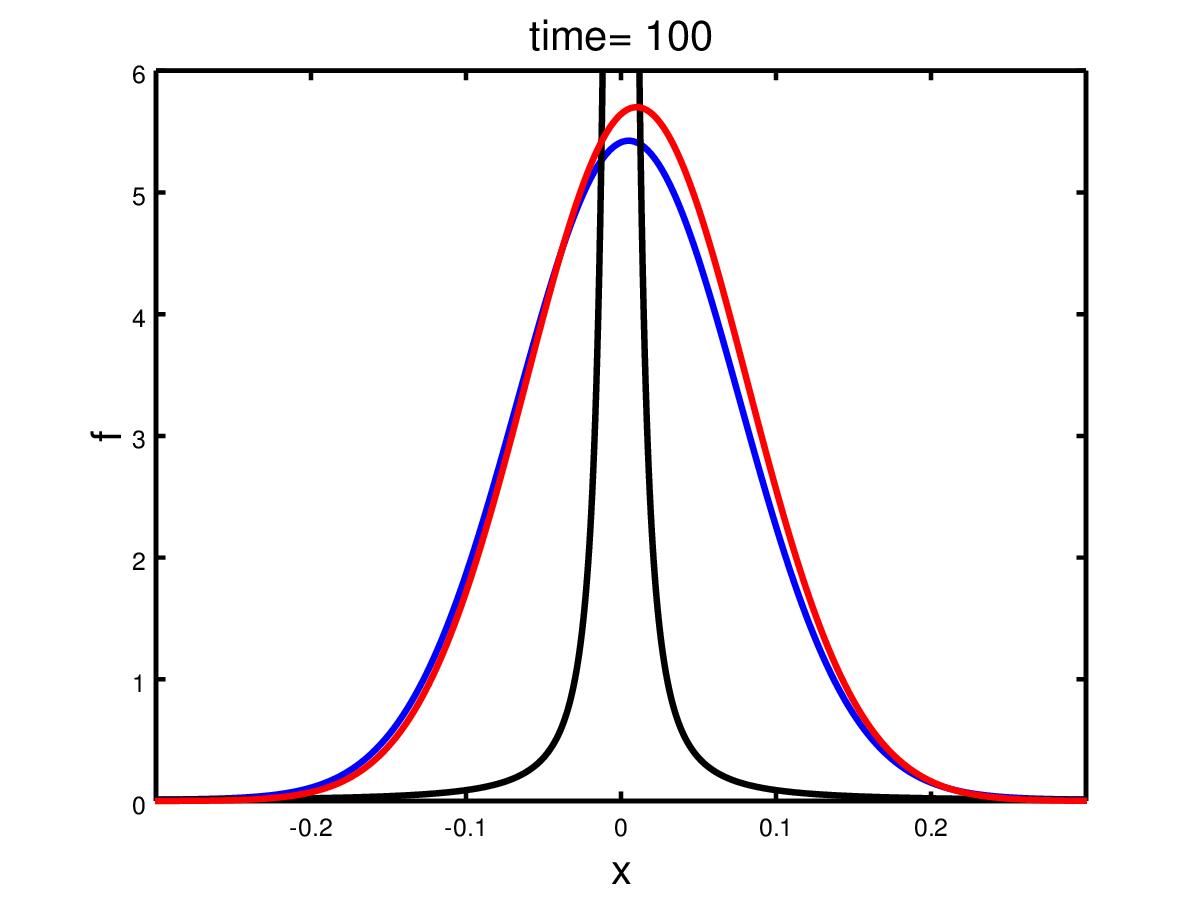}\quad
\includegraphics[scale=0.3]{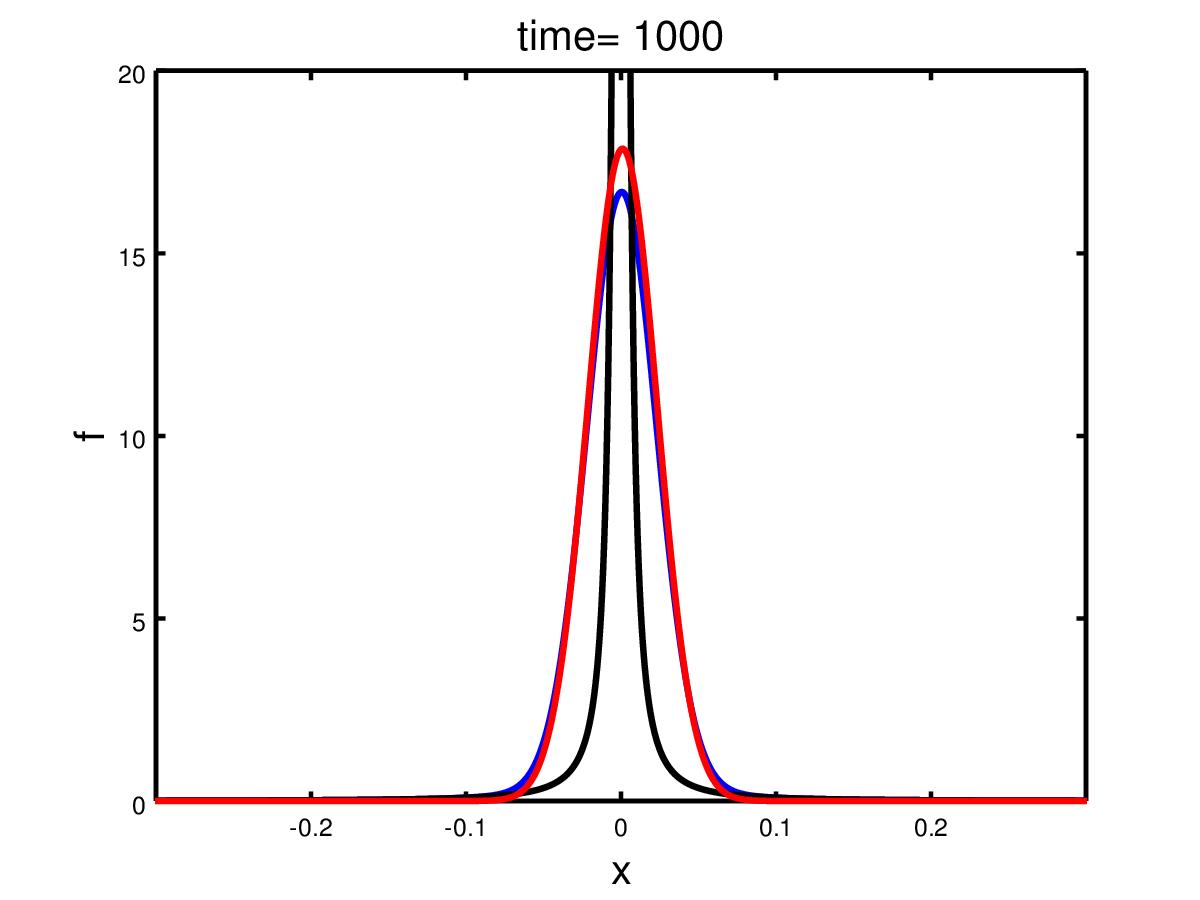}\quad \includegraphics[scale=0.3]{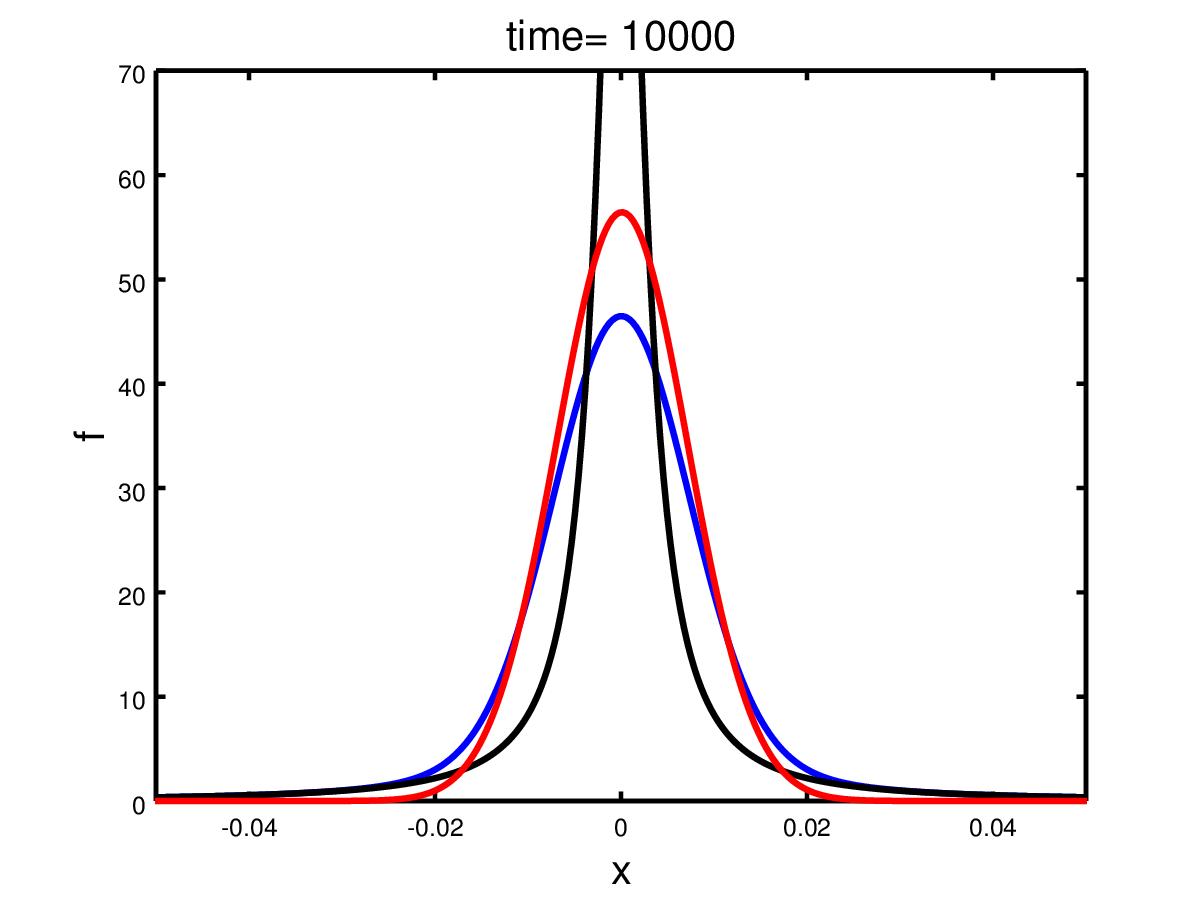}\quad\includegraphics[scale=0.3]{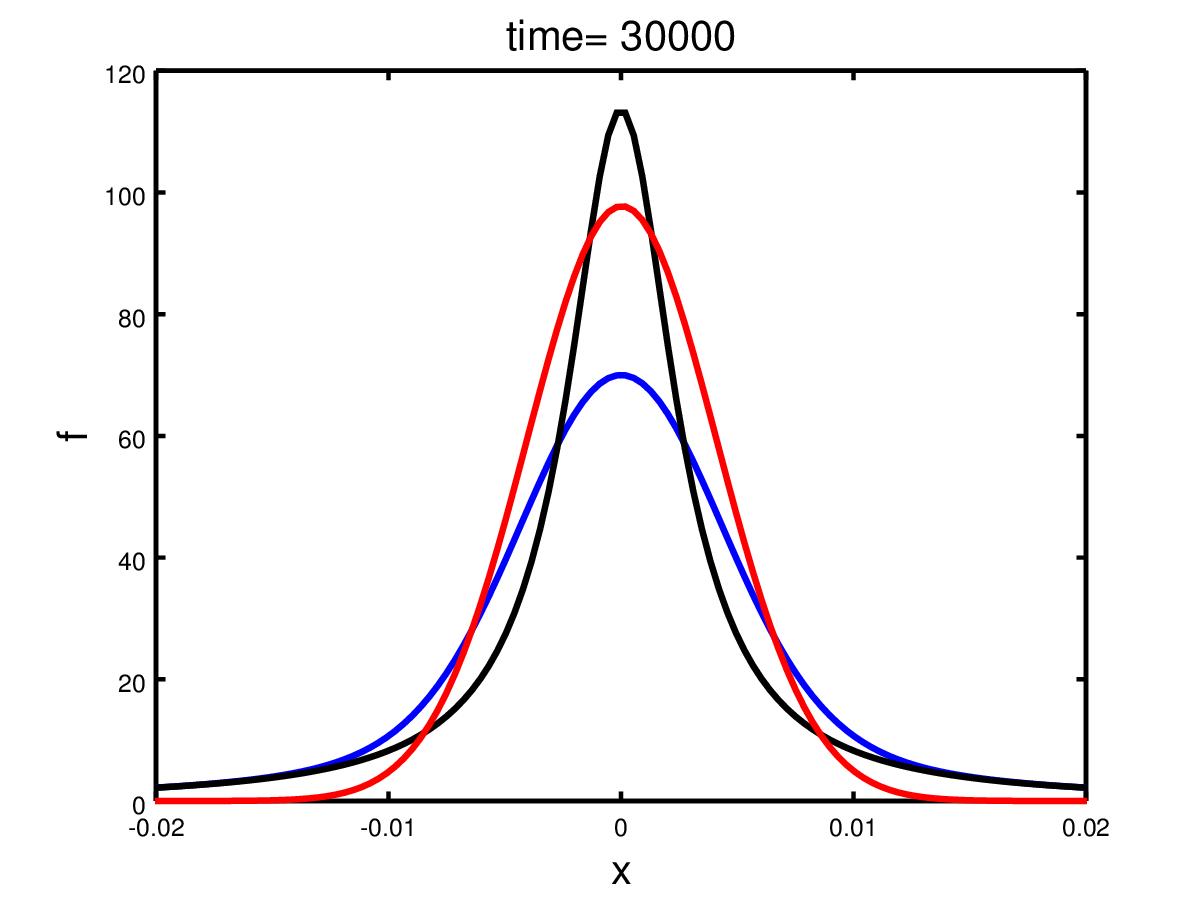}\quad
\includegraphics[scale=0.3]{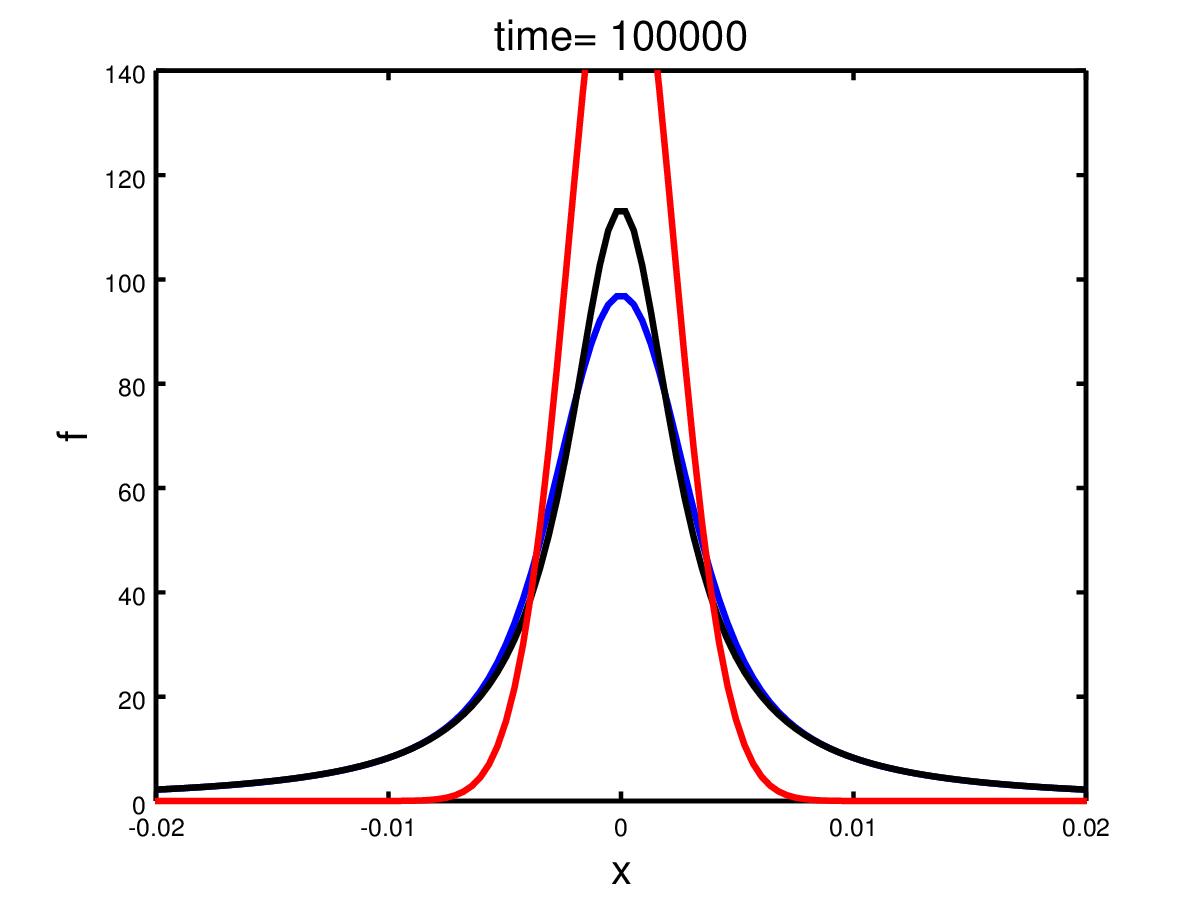}\quad
\includegraphics[scale=0.3]{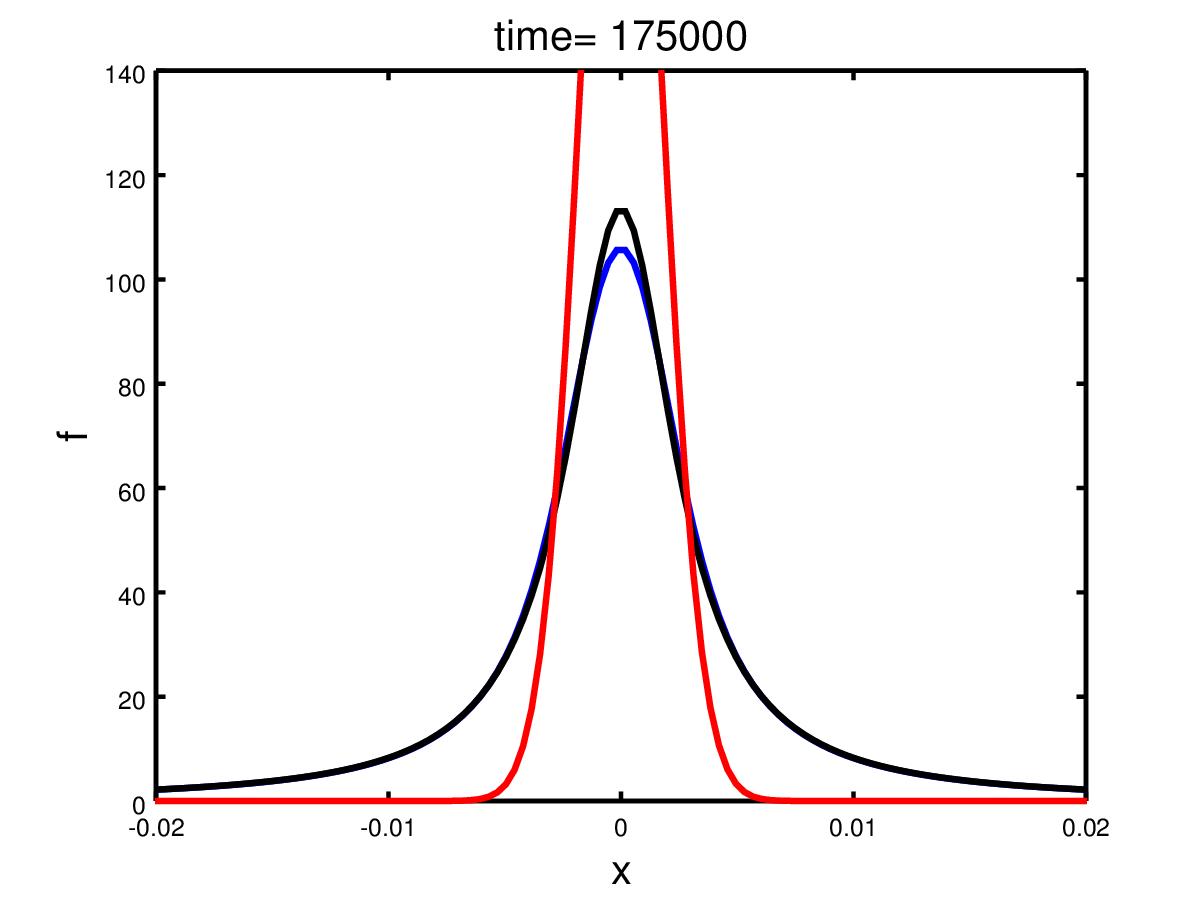}
\caption{The different graphs correspond to different time points, from $t=0$ to $t=175\,000$, 
of the same simulation of \eqref{eqq0} for $\varepsilon=10^{-2}$ (see in the text for a complete description). In each of these plots, the blue (resp. red, black) line represents $x\mapsto f_\var(t,x)$ (resp. $x\mapsto\Gamma_1(t,\var,x)$, $x\mapsto\Gamma_2(\var,x)$). Note that in this figure, the scales of both axis change from one graph to the other, to accommodate with the dynamics of the solution $f(t,\cdot)$.}\label{fig1}
\end{center}
\end{figure}


\begin{figure}[tbp]
\begin{center}
\includegraphics[scale=0.4]{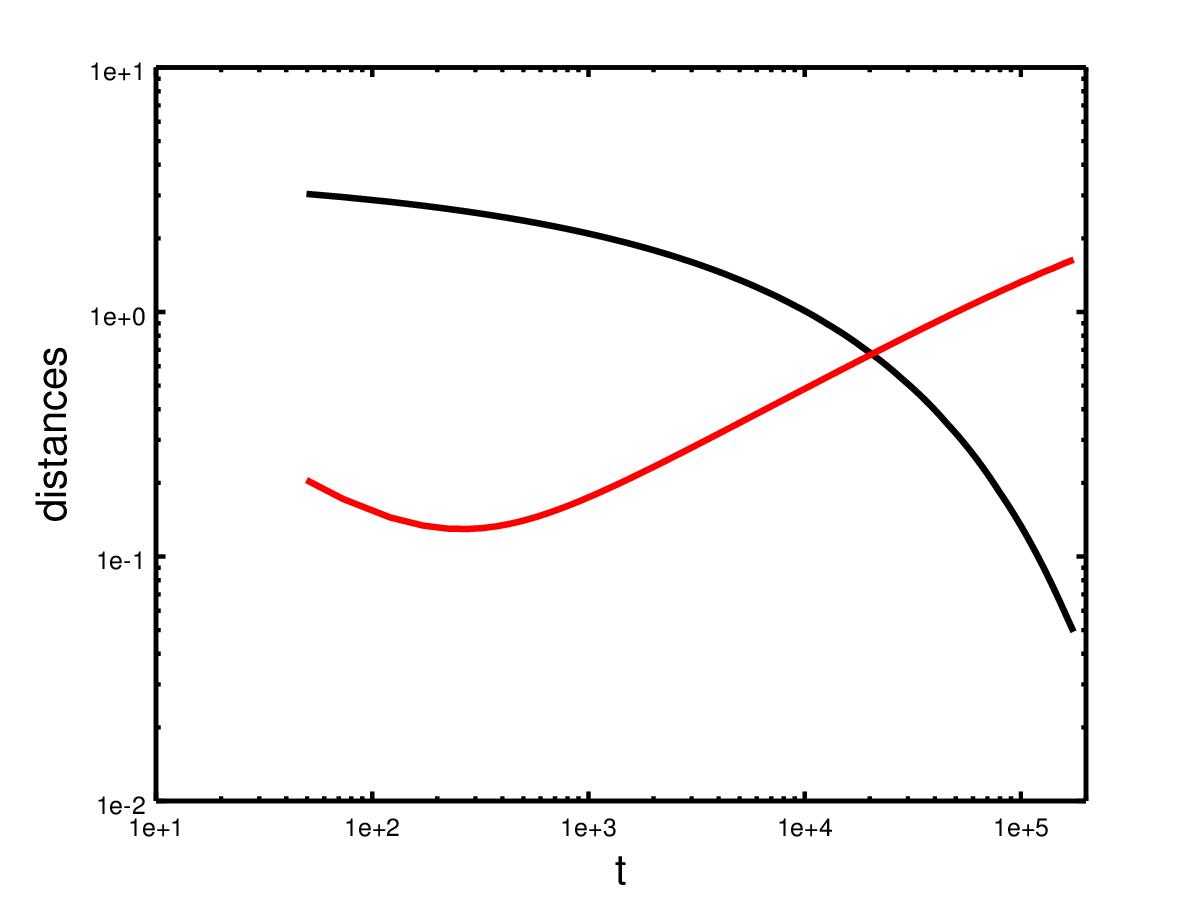}
\caption{Simulation of \eqref{eqq0} with $\varepsilon=10^{-2}$ (see in the text for a complete description). The red line represents $\|f_\var(t,\cdot)-\Gamma_1(t,\var,\cdot)\|_{L^1(I)}$, while the black line represents $\|f_\var(t,\cdot)-\Gamma_2(\var,\cdot)\|_{L^1(I)}$.}\label{fig2}
\end{center}
\end{figure}

 Another way to look at these results is to \gr{consider} $t\geq 0$ and $\var>0$ as two parameters, and to see the approximations presented above as approximations of $f_\var(t,\cdot)$ for some set of parameters: $f_\var(t,\cdot)\sim_{\var\to 0} \Gamma_2(\var,\cdot)$ for $(t,\var)\in\{(\tilde t,\tilde\var);\tilde t\gg \tilde{\var}^{-4}\}$, while $f_\var(t,\cdot)\sim_{\var\to 0} \Gamma_1(t,\var,\cdot)$ for $(t,\var)\in\{(\tilde t,\tilde\var);1\ll\tilde t\ll \tilde{\var}^{-2/3}\}$. We have represented these sets in Fig~\ref{fig3}.

\begin{figure}[tbp]
\begin{center}
\includegraphics[scale=0.4]{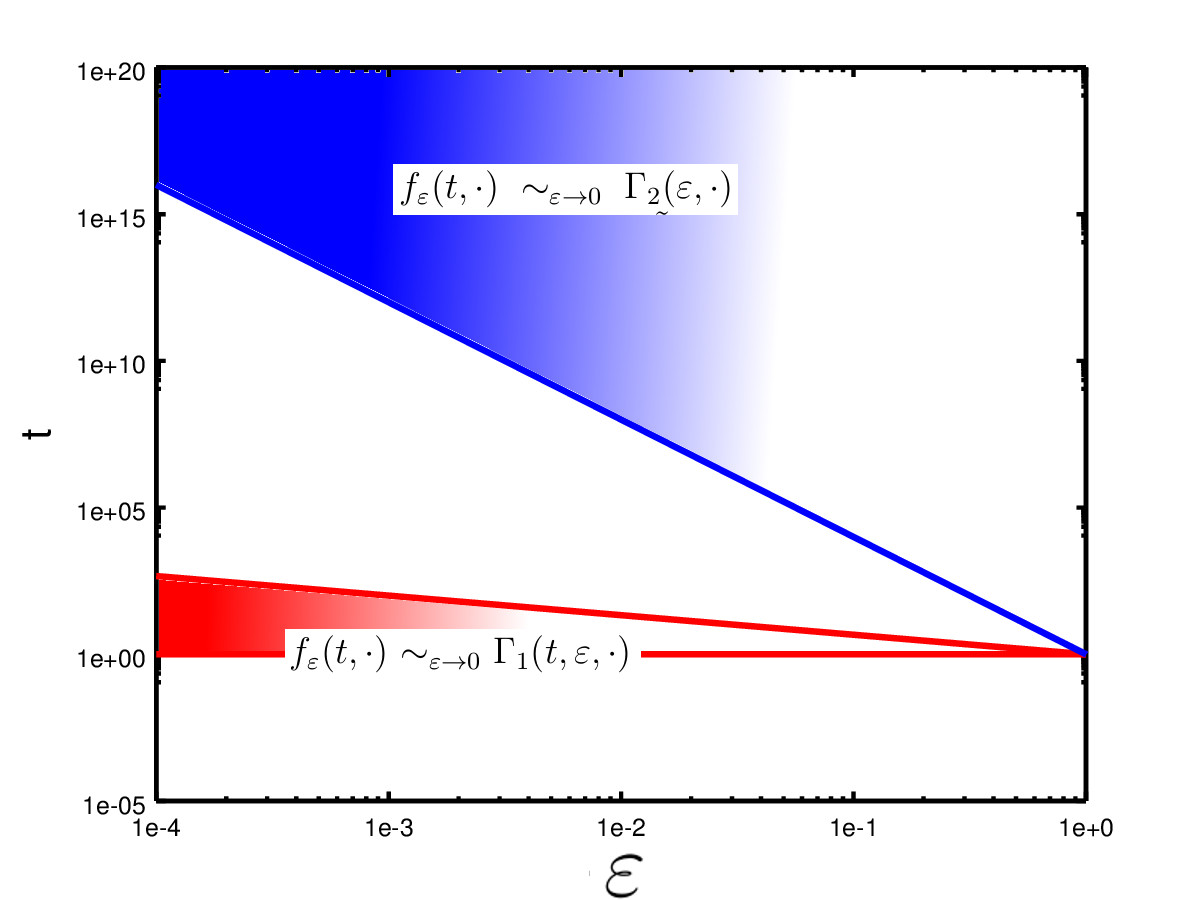}
\caption{Representation of the set $\{(\tilde t,\tilde\var);\tilde t\gg \tilde{\var}^{-4}\}$ (in blue), where the approximation $f_\var(t,\cdot)\sim_{\var\to 0} \Gamma_2(\var,\cdot)$ holds provided that $\var>0$ is small enough; and of the set $\{(\tilde t,\tilde\var);1\ll\tilde t\ll \tilde{\var}^{-2/3}\}$ (in red), where the approximation $f_\var(t,\cdot)\sim_{\var\to 0} \Gamma_1(t,\var,\cdot)$ holds provided that $\var>0$ is small enough.}\label{fig3}
\end{center}
\end{figure}

As described in the Subsection~\ref{subsec:sel-mut-comp}, the phenotypic distribution of species is involved in many ecological and epidemiological problematics. Our study is a general analysis of this problem and we do not have a particular application in mind. An interesting and (to our knowledge) new feature described by our study is that the tails of the traits distribution in a population can change drastically between "short times", that is $1\ll t\ll \var^{-2/3}$ and "large times", that is $t\gg \var^{-4}$: the distribution is initially close to a Gaussian distribution, with small tails, and then converges to a thick tailed Cauchy distribution. \gr{This result could have significant consequences in for \emph{evolutionary rescue}: the tails of the distribution then play an important role. Quantifying the effect of this property of the tails of the distributions would however require further work, in particular on the impact of stochasticity (the number of pathogen is typically large, but finite). The plasticity of the pathogen (see \cite{Chevin}) may also play an important role.}

%


\subsection{Rigorous statements} \label{subsec:rigorous}

Here we state the two main theorems of the paper\gr{, each of them followed by a corollary}. To do so we start by defining the linear operator

$$
 (A_{\varepsilon}f) (x):=(1-\varepsilon)f(x)
-x^{2}\,f(x)+\varepsilon \gamma(x)\,\int_{I}f(y)\mbox{d}y
$$
and denoting by $\lambda_{\varepsilon}$ the dominant eigenvalue of $A_{\varepsilon}$ and by $\psi_{\varepsilon}(x)=\frac{\varepsilon \gamma(x)}{\lambda_{\varepsilon}-(1-\varepsilon)+x^2}$ the corresponding eigenvector (see Proposition \ref{proposition1}).

\begin{theorem}
\label{theorem1} Let us assume that the initial datum $f_0\ge 0$ is  integrable
 on $I$ ($I= ]a,b[$, $-\infty \le a < b \le +\infty$), and $f_0$ is not identically (i.-e. a.e.) $0$.

Then the initial value problem for (\ref{eqq0}) with
$f(0,x)=f_0(x)$ has a unique (global for positive times) mild
solution.
 Moreover, for $\varepsilon>0$ small enough, and any $\rho_{\varepsilon} < (\gamma(0)\pi\varepsilon)^2$,
there exists a constant $C_{\varepsilon}>0$ (depending on
$f_0$  and $\var$) such that
$$ \left\|f(t,\cdot)-\lambda_{\varepsilon}\,\psi_{\varepsilon} \right\|_{L^1(I)} \leq C_{\varepsilon}\,
e^{-\rho_{\varepsilon}\,t}.
$$
Furthermore, taking
$\rho_{\varepsilon}=\frac{\alpha_{\varepsilon}}{2}=\frac{\lambda_{\varepsilon}-(1-\varepsilon)}{2}$, the following
more explicit (in terms of dependence w.r.t $\var$) estimate holds
$$ \left\|f(.,t)-\lambda_{\varepsilon}\,\psi_{\varepsilon} \right\|_{L^1(I)} \leq
K\,
\varepsilon^{\frac{-\hat{K}}{\varepsilon^2}}\, e^{\frac{-\alpha_{\varepsilon}t}{2}},
$$
where $K, \hat{K}>0$ depend on $f_0$ but not on $\var$.

\end{theorem}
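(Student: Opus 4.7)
My plan is to decouple the nonlinear scalar factor $\mathcal I(t):=\int_I f(t,y)\,dy$ from the linear part of \eqref{eqq0}. Writing the equation as $\partial_t f = A_\varepsilon f - \mathcal I(t)\,f$ and setting $h(t):=\exp\bigl(-\int_0^t \mathcal I(s)\,ds\bigr)$, $g(t,x):=f(t,x)/h(t)$, one checks that $g$ satisfies the linear equation $\partial_t g = A_\varepsilon g$, so $g(t,\cdot)=e^{tA_\varepsilon}f_0$. Moreover $\mathcal I = hG$ with $G(t):=\int_I g(t,x)\,dx$ and $h'=-\mathcal I h$ give $(1/h)'=G$, hence $1/h(t) = 1+\int_0^t G(s)\,ds$: the whole nonlinear dynamics is driven by the linear semigroup $e^{tA_\varepsilon}$. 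Global well-posedness in $L^1_+(I)$ then follows from a standard fixed-point argument for $e^{tA_\varepsilon}$ (the multiplication operator by $1-\varepsilon-x^2$ generates a contraction semigroup and $\varepsilon\gamma\otimes\int$ is a bounded perturbation) together with the \emph{a priori} bound $\mathcal I'\le\mathcal I(1-\mathcal I)$.

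Since $A_\varepsilon$ is a rank-one perturbation of a multiplication operator whose spectrum is contained in $(-\infty,1-\varepsilon]$, its resolvent has a closed form; I would perform a contour-integral spectral decomposition to obtain
$$
e^{tA_\varepsilon}f_0 \, = \, c(f_0)\,e^{\lambda_\varepsilon t}\psi_\varepsilon \, + \, R_\varepsilon(t)f_0, \qquad \|R_\varepsilon(t)f_0\|_{L^1}\le C_\varepsilon\|f_0\|_{L^1}\,e^{(\lambda_\varepsilon-\rho_\varepsilon)t},
$$
valid for every $0<\rho_\varepsilon<\alpha_\varepsilon:=\lambda_\varepsilon-(1-\varepsilon)$. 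The coefficient $c(f_0)=\int \widetilde\psi_\varepsilon f_0\,dx \,\big/\, \int \widetilde\psi_\varepsilon\psi_\varepsilon\,dx$, with left eigenvector $\widetilde\psi_\varepsilon(x)\propto 1/(\alpha_\varepsilon+x^2)$, is strictly positive because $f_0\ge 0$ is not a.e.\ zero. Substituting into $1/h(t)=1+\int_0^t G(s)\,ds$ yields $h(t)=\frac{\lambda_\varepsilon}{c(f_0)}\,e^{-\lambda_\varepsilon t}\bigl[1+O(e^{-\rho_\varepsilon t})\bigr]$, and multiplying by $g(t,\cdot)$ produces $\|f(t,\cdot)-\lambda_\varepsilon\psi_\varepsilon\|_{L^1}\le C_\varepsilon e^{-\rho_\varepsilon t}$. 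A direct analysis of the characteristic equation $\int\varepsilon\gamma(y)(\alpha_\varepsilon+y^2)^{-1}\,dy=1$ gives $\alpha_\varepsilon\sim(\gamma(0)\pi\varepsilon)^2$ for small $\varepsilon$, so the condition $\rho_\varepsilon<(\gamma(0)\pi\varepsilon)^2$ in the statement is essentially the spectral-gap condition.

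The delicate part is making the constant $C_\varepsilon$ explicit in $\varepsilon$. Since the spectral gap $\alpha_\varepsilon\sim\varepsilon^2$ vanishes as $\varepsilon\to 0$, every constant in the previous paragraph degenerates: the normalization $\int\widetilde\psi_\varepsilon\psi_\varepsilon\,dx\sim\varepsilon^{-2}$ makes $1/c(f_0)$ polynomially large in $1/\varepsilon$, and the resolvent of $A_\varepsilon$ restricted to the contour $\mathrm{Re}\,z=\lambda_\varepsilon-\alpha_\varepsilon/2$ has norm of order $\alpha_\varepsilon^{-1}\sim\varepsilon^{-2}$. I expect the exponential-in-$\varepsilon^{-2}$ prefactor $\varepsilon^{-\hat K/\varepsilon^2}$ to come from accumulating such polynomial factors over the transient time window of length $\sim\alpha_\varepsilon^{-1}\sim\varepsilon^{-2}$ before the principal eigenmode dominates: a Gronwall-type estimate on the deviation $f-\lambda_\varepsilon\psi_\varepsilon$ over this window, combined with $\varepsilon$-explicit bounds derived from the explicit rank-one resolvent formula (on $L^1(I)$, where Gearhart--Pr\"uss does not directly apply), is the step I expect to be the main technical obstacle.
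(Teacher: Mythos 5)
Your structural outline matches the paper's proof closely: factoring out the scalar mass (your $g=f/h$, $h=e^{-\int_0^t\mathcal I}$ is a reparametrization of the paper's ansatz $f(t,\cdot)=h(t)\,\tilde T_\varepsilon(t)f_0$), spectral projection of the linear semigroup onto the dominant eigenmode $\psi_\varepsilon$, and the observation $1/h(t)=1+\int_0^t G(s)\,ds$ which is precisely the explicit solution of the Bernoulli ODE the paper writes for $h$. The well-posedness argument and the qualitative bound $\|f-\lambda_\varepsilon\psi_\varepsilon\|_{L^1}\le C_\varepsilon e^{-\rho_\varepsilon t}$ for $\rho_\varepsilon<\alpha_\varepsilon$ are essentially the paper's Lemmas on the spectrum, growth bound, $c_{f_0}$, and $\varphi$, so up to this point you are on the same track.

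The genuine gap is the last, $\varepsilon$-explicit estimate $\|f-\lambda_\varepsilon\psi_\varepsilon\|\le K\varepsilon^{-\hat K/\varepsilon^2}e^{-\alpha_\varepsilon t/2}$, which you yourself flag as ``the step I expect to be the main technical obstacle'' but do not carry out, and whose mechanism you misidentify. The paper does \emph{not} use a Gronwall estimate over a transient window; instead it proves two quantitative facts and combines them algebraically. First, a Laplace-transform/contour-shift computation (subsection~2.3 of the paper) gives the linear bound $\|\tilde T_\varepsilon(t)f_0-c_{f_0}\psi_\varepsilon\|\le K\varepsilon^{-4}e^{-\alpha_\varepsilon t/2}\|f_0\|$ (note the exponent is $-4$, not $-2$, because the bound involves $\alpha_\varepsilon^{-2}$). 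Second, the denominator $D(t)=e^{-\lambda_\varepsilon t}+e^{-\lambda_\varepsilon t}\int_0^t(c_{f_0}+\varphi(s))e^{\lambda_\varepsilon s}\,ds$ of the explicit solution for $h$ is bounded below, uniformly in $t$, by $\left(\frac{c_{f_0}}{2K_\varepsilon}\right)^{\lambda_\varepsilon/\rho_\varepsilon}$: one has $D(t)\ge e^{-\lambda_\varepsilon t}$ up to the time $t_\varepsilon$ at which $K_\varepsilon e^{-\rho_\varepsilon t_\varepsilon}\le c_{f_0}/2$, and $e^{-\lambda_\varepsilon t_\varepsilon}$ equals exactly that power. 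Since $c_{f_0}\gtrsim\varepsilon^2$, $K_\varepsilon\lesssim\varepsilon^{-4}$ and, crucially, $\lambda_\varepsilon/\rho_\varepsilon=2\lambda_\varepsilon/\alpha_\varepsilon\sim 2/\alpha_\varepsilon\sim\varepsilon^{-2}$, raising an $O(\varepsilon^6)$ quantity to the power $\sim\varepsilon^{-2}$ produces $\varepsilon^{-\hat K/\varepsilon^2}$. So the superpolynomial prefactor is a one-shot algebraic consequence of exponentiating a polynomial smallness by $\lambda_\varepsilon/\rho_\varepsilon$, not an iterative Gronwall accumulation; also note $t_\varepsilon\sim\varepsilon^{-2}\ln(1/\varepsilon)$, and the logarithm is essential (a window of bare length $\varepsilon^{-2}$ would give $e^{\hat K/\varepsilon^2}$, not $\varepsilon^{-\hat K/\varepsilon^2}$). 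Both ingredients (the Laplace-transform remainder estimate and the uniform lower bound on $D$) need to be proved to close this step.
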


\begin{corollary}\label{cor1}
Under the same hypotheses, there exists positive constants $K, \hat{K}$ and $\tilde{K}$ (independent of $\varepsilon$) such that
$$
\left\|f(t,\cdot) - \frac{\varepsilon
\gamma(0)}{(\gamma(0)\pi\varepsilon)^2+x^2}\right\|_{L^{1}(I)} \leq K\,
\varepsilon^{\frac{-\hat{K}}{\varepsilon^2}}\,
e^{-\hat{K}\varepsilon^2 t}+ \tilde{K}\varepsilon
\ln{\left(\frac{1}{\varepsilon}\right)}.
$$
\end{corollary}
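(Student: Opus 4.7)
The plan is to start from the bound given by Theorem \ref{theorem1} and then replace the principal eigenfunction $\lambda_\varepsilon \psi_\varepsilon$ by the explicit Cauchy profile $\Gamma_2(\varepsilon,x) = \varepsilon \gamma(0)/((\gamma(0)\pi\varepsilon)^2 + x^2)$. By the triangle inequality,
$$
\left\| f(t,\cdot) - \Gamma_2(\varepsilon,\cdot) \right\|_{L^1(I)} \le \left\| f(t,\cdot) - \lambda_\varepsilon \psi_\varepsilon \right\|_{L^1(I)} + \left\| \lambda_\varepsilon \psi_\varepsilon - \Gamma_2(\varepsilon,\cdot) \right\|_{L^1(I)}.
$$
The first term is immediately controlled by the second estimate of Theorem \ref{theorem1} and by the lower bound $\alpha_\varepsilon \ge \hat{K}\,\varepsilon^2$ (valid for $\varepsilon$ small, since $\alpha_\varepsilon \sim (\gamma(0)\pi\varepsilon)^2$). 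It remains to show that the second, time-independent term is bounded by $\tilde{K}\,\varepsilon \ln(1/\varepsilon)$.

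Using that $\int_I \psi_\varepsilon\,dx = 1$ (this normalisation is built into the formula for $\psi_\varepsilon$ and is the equation determining $\alpha_\varepsilon$), I would split
$$
\lambda_\varepsilon \psi_\varepsilon - \Gamma_2(\varepsilon,\cdot) = (\lambda_\varepsilon - 1)\,\psi_\varepsilon + (\psi_\varepsilon - \Gamma_2(\varepsilon,\cdot)).
$$
Since $\lambda_\varepsilon - 1 = \alpha_\varepsilon - \varepsilon$ and $\alpha_\varepsilon = O(\varepsilon^2)$, the first piece contributes $|\lambda_\varepsilon - 1| \le C\varepsilon$ in $L^1$. For the second piece, writing $\beta := (\gamma(0)\pi\varepsilon)^2$ and using the triangle inequality, I separate the contribution coming from varying $\gamma$ from the one coming from replacing $\alpha_\varepsilon$ by $\beta$:
$$
|\psi_\varepsilon(x) - \Gamma_2(\varepsilon,x)| \le \frac{\varepsilon\,|\gamma(x)-\gamma(0)|}{\alpha_\varepsilon + x^2} + \frac{\varepsilon\,\gamma(0)\,|\alpha_\varepsilon - \beta|}{(\alpha_\varepsilon + x^2)(\beta + x^2)}.
$$
Since $\gamma$ is $C^1$ with bounded derivative, $|\gamma(x)-\gamma(0)| \le L|x|$; the first integral is then dominated by $L\varepsilon\int_I |x|/(\alpha_\varepsilon + x^2)\,dx$, which (when $I$ is bounded, and after truncating the tail via $\gamma \in L^1$ when $I$ is unbounded) evaluates to $L\varepsilon \ln(1 + R^2/\alpha_\varepsilon) = O(\varepsilon \ln(1/\varepsilon))$. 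This is where the logarithmic factor appears.

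For the second integral, I need a sharp estimate on $|\alpha_\varepsilon - \beta|$. I would set $g(a) := \int_I \varepsilon\gamma(x)/(a + x^2)\,dx$, note that $g(\alpha_\varepsilon) = 1$ by construction, and compute $g(\beta)$: inserting $\beta = (\gamma(0)\pi\varepsilon)^2$ and splitting $\gamma = \gamma(0) + (\gamma - \gamma(0))$, the constant-$\gamma$ part gives $1 + O(\varepsilon)$ thanks to the explicit arctangent primitive, while the $\gamma(x)-\gamma(0)$ perturbation contributes $O(\varepsilon \ln(1/\varepsilon))$ by the same argument as above. Combining this with the lower bound $|g'(\beta)| \gtrsim \varepsilon^{-2}$ (obtained directly from $\int \varepsilon\gamma(x)/(\beta + x^2)^2\,dx \sim \varepsilon^{-2}$) yields $|\alpha_\varepsilon - \beta| = O(\varepsilon^3 \ln(1/\varepsilon))$; dividing by $\beta\sqrt{\alpha_\varepsilon} \sim \varepsilon^3$ then shows the second integral is $O(\varepsilon \ln(1/\varepsilon))$ as well.

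The main obstacle is the interplay between the singular normalisation $\int \psi_\varepsilon = 1$ (which forces $\alpha_\varepsilon$ to be $O(\varepsilon^2)$) and the logarithmic blow-up of $\int |x|/(\alpha_\varepsilon + x^2)\,dx$: these two $\varepsilon$-scales conspire to produce exactly the $\varepsilon \ln(1/\varepsilon)$ term in the corollary. A secondary technical point is to handle the possibly unbounded domain $I$; this is dealt with by treating separately the inner region $|x|\le R$ (where the Lipschitz bound on $\gamma$ is used) and the tail (where $\gamma \in L^1$ and the denominator $x^2$ provide absolute integrability uniformly in $\varepsilon$).
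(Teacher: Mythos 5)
Your proof is correct and follows essentially the same route as the paper: a triangle inequality reducing to Theorem~\ref{theorem1}, followed by a three-piece decomposition of $\lambda_\varepsilon\psi_\varepsilon - \Gamma_2(\varepsilon,\cdot)$ in which the $\varepsilon\ln(1/\varepsilon)$ rate comes from the Lipschitz modulus of $\gamma$ near $0$ and from the bound $\left|\alpha_\varepsilon - (\gamma(0)\pi\varepsilon)^2\right| = O(\varepsilon^3\ln(1/\varepsilon))$. The only differences are cosmetic: you re-derive that eigenvalue expansion via the decreasing function $g(a)=\int_I \varepsilon\gamma/(a+x^2)\,dx$, whereas the paper already supplies it as Proposition~\ref{proposition2} and simply cites it, and you exploit $\int_I\psi_\varepsilon\,dx=1$ to treat the $(\lambda_\varepsilon-1)\psi_\varepsilon$ piece slightly more cleanly than the paper's direct integral computation.
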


\begin{theorem}\label{thm:smallt}
Let $\gamma\in C^0(I)\cup L^\infty(I)$ such that $\int_I \gamma(x)\,dx=1$. Let $f(0,\cdot)\in W^{1,\infty}(I)$ satisfying $f(0,0)>0$ and $\int_I f(0,x)\,dx<1$. There exists a constant $C>0$ such that the solution $f\in C^1(\mathbb R_+\times I)$ of \eqref{eqq0} satisfies
\gr{\begin{equation}\label{est:final-smallt}
\forall t\geq 0,\quad \Bigg\| x \mapsto f(t,x)-\frac{f(0,x)\sqrt t e^{-x^2 t}}{f(0,0)\,\int_I e^{-y^2}\,dy}\Bigg\|_{L^1(I)}\leq 
C \left(\frac 1{\sqrt t}+\varepsilon\, t^{\frac 3 2}\,e^{C\,\varepsilon\, t}
\right).
\end{equation}}
\end{theorem}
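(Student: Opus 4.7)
The plan is to recast \eqref{eqq0} as a linear problem in $f$ with time-dependent coefficients governed by the total mass $\mathcal{I}_\varepsilon(t):=\int_I f_\varepsilon(t,y)\,dy$, and to exploit an integrating-factor/Duhamel representation. Setting $K_\varepsilon(t):=\exp\!\left(\int_0^t(1-\varepsilon-\mathcal{I}_\varepsilon(s))\,ds\right)$ and $f_\varepsilon=F_\varepsilon\,K_\varepsilon\,e^{-tx^2}$, the equation for $F_\varepsilon$ becomes the trivial $\partial_t F_\varepsilon=\varepsilon\gamma(x)\mathcal{I}_\varepsilon(t)\,e^{tx^2}/K_\varepsilon(t)$, which yields
$$ f_\varepsilon(t,x)=f(0,x)\,K_\varepsilon(t)\,e^{-tx^2}+\varepsilon\,\gamma(x)\,K_\varepsilon(t)\int_0^t\frac{\mathcal{I}_\varepsilon(s)}{K_\varepsilon(s)}\,e^{-(t-s)x^2}\,ds. $$
The theorem then splits into (i) comparing the first term with $\Gamma_1(t,\varepsilon,x)$, which reduces to the prefactor asymptotic $K_\varepsilon(t)\approx \sqrt{t}/\bigl(f(0,0)\int_I e^{-y^2}dy\bigr)$, and (ii) bounding the second (mutation) term in $L^1$ by $C\varepsilon t^{3/2}e^{C\varepsilon t}$.

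A preliminary step is to obtain uniform a priori bounds on $\mathcal{I}_\varepsilon$. Integration of \eqref{eqq0} over $I$ makes the $+\varepsilon\mathcal{I}_\varepsilon$ mutation mass cancel the $-\varepsilon\mathcal{I}_\varepsilon$ death contribution, leaving the closed ODE
$$ \mathcal{I}_\varepsilon'(t)=\mathcal{I}_\varepsilon(t)\bigl(1-\mathcal{I}_\varepsilon(t)\bigr)-\int_I x^2\,f_\varepsilon(t,x)\,dx, $$
and the hypothesis $\mathcal{I}_\varepsilon(0)<1$ combined with scalar comparison yields $\mathcal{I}_\varepsilon(t)<1$ on $\mathbb{R}_+$ uniformly in $\varepsilon$; a lower bound $\mathcal{I}_\varepsilon(t)\gtrsim 1-O(1/t)$ follows from the positivity of the first Duhamel term and $f(0,0)>0$.

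For step (i), at $\varepsilon=0$ the identity $\mathcal{I}_0(t)=K_0(t)\phi(t)$, with $\phi(t):=\int_I f(0,x)e^{-tx^2}\,dx$, together with $K_0'=K_0(1-\mathcal{I}_0)$, reduces the system to a scalar Bernoulli-type ODE for $\mathcal{I}_0$; the substitution $v=1/\mathcal{I}_0$ with the integrating factor $e^t\phi(t)$ leads to the explicit formula
$$ \frac{1}{\mathcal{I}_0(t)}=\frac{\phi(0)\,e^{-t}}{\mathcal{I}_0(0)\,\phi(t)}+\frac{1}{\phi(t)}\int_0^t e^{s-t}\phi(s)\,ds. $$
Quantitative Laplace expansions of $\phi$ (available under $f(0,\cdot)\in W^{1,\infty}(I)$ and $f(0,0)>0$) give $\sqrt{t}\phi(t)=f(0,0)\int_I e^{-y^2}dy+O(t^{-1/2})$, hence $K_0(t)=\sqrt{t}/[f(0,0)\int_I e^{-y^2}dy]+O(t^{-1/2})$. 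Since $\|e^{-tx^2}\|_{L^1(I)}=O(t^{-1/2})$, this produces the $O(t^{-1/2})$ $L^1$-error on the first Duhamel term at $\varepsilon=0$. For $\varepsilon>0$, a Gronwall comparison on the coupled system $(\mathcal{I}_\varepsilon,K_\varepsilon)$ shows that the $\varepsilon$-perturbation modifies $K_\varepsilon$ by a multiplicative factor $e^{C\varepsilon t}$, and for step (ii) the second Duhamel term has $L^1$-norm $\varepsilon\,K_\varepsilon(t)\int_0^t\mathcal{I}_\varepsilon(s)K_\varepsilon(s)^{-1}\Psi(t-s)\,ds$ with $\Psi(\tau):=\int_I\gamma(x)e^{-\tau x^2}\,dx\leq 1$; plugging in $K_\varepsilon\lesssim\sqrt{t}\,e^{C\varepsilon t}$, $\mathcal{I}_\varepsilon\leq 1$ and $\int_0^t K_\varepsilon(s)^{-1}\,ds\lesssim\sqrt{t}$ yields the announced $\varepsilon t^{3/2}e^{C\varepsilon t}$ bound.

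The main obstacle I expect is the sharp Laplace asymptotic for $K_0(t)$ with a remainder of the correct $O(t^{-1/2})$ order, which must be carried uniformly in $t$ through the nonlinear scalar ODE for $\mathcal{I}_0$ and then propagated through the Gronwall perturbation in $\varepsilon$ without losing the $t^{-1/2}$ decay. A secondary technical point is to control the $x$-dependence $f(0,x)-f(0,0)$ near $x=0$ in $L^1$ against the narrow Gaussian $e^{-tx^2}$ at the same $t^{-1/2}$ scale, so that the prefactor error and the profile error combine cleanly in the final bound.
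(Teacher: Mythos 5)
Your plan is essentially the same as the paper's proof: both start from the Duhamel formula giving $f(t,x)=f(0,x)\,K_\varepsilon(t)\,e^{-tx^2}+\text{(mutation term)}$, both establish the a priori bound $0\le\mathcal I_\varepsilon\le1$ by integrating \eqref{eqq0} and noting that the $+\varepsilon\mathcal I$ mutation mass cancels the $-\varepsilon\mathcal I$ death contribution, both reduce the theorem to a sharp asymptotic on $K_\varepsilon(t)=e^{(1-\varepsilon)t-\int_0^t\mathcal I}$, and both bound the mutation integral in $L^1$ using $\mathcal I\le1$ and $\|e^{-\tau x^2}\|_{L^1(I)}\lesssim\tau^{-1/2}$. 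Where you diverge is in how the key asymptotic $K_\varepsilon(t)\approx\sqrt t/(f(0,0)\int_I e^{-y^2}dy)$ is obtained: the paper works directly for all $\varepsilon$ with an integral equation for $Y(t):=e^{\int_0^t\mathcal I}$, namely $Y'=\phi\,e^{(1-\varepsilon)t}+w\,Y$ (where $\phi(t)=\int_I f(0,x)e^{-tx^2}dx$ and $w$ is the mutation contribution to $\mathcal I$), which leaves $w$ as a multiplicative source rather than inside an exponent; you instead observe that at $\varepsilon=0$ the identity $\mathcal I_0=K_0\phi$ together with $K_0'=K_0(1-\mathcal I_0)$ makes $u=1/K_0$ solve the linear ODE $u'=-u+\phi$, yielding the explicit formula you wrote, and you propose to handle $\varepsilon>0$ by a Gronwall perturbation. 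That observation is correct and gives a cleaner closed form than the paper's term-by-term integral-equation estimate, and the perturbation can indeed be closed (for $\varepsilon>0$, $u=1/K_\varepsilon$ solves the \emph{linear} ODE $u'=-(1-\varepsilon-w)u+\phi$), though one should be careful not to put $w$ inside the integrating factor: $\int_0^t w$ can be of order $\varepsilon t^2e^{C\varepsilon t}$, which is not controlled by $e^{C\varepsilon t}$ even when $\varepsilon t^{3/2}$ is small, and the paper's presentation deliberately sidesteps this by treating $wY$ as a bounded source in the integral equation for $Y$.

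One small inaccuracy worth flagging: you claim $K_0(t)=\sqrt t/[f(0,0)\int_I e^{-y^2}dy]+O(t^{-1/2})$. The remainder for the \emph{pointwise} asymptotic of $K_0$ is in fact only $O(1)$ (indeed $1/K_0(t)=f(0,0)\int e^{-y^2}dy/\sqrt t+O(1/t)$, so $K_0(t)=\sqrt t/[f(0,0)\int e^{-y^2}dy]+O(1)$). This does not affect the theorem, since after multiplying by $\|f(0,\cdot)e^{-t\,\cdot^2}\|_{L^1(I)}\lesssim t^{-1/2}$ the first Duhamel term still contributes $O(t^{-1/2})$ to the $L^1$ error, consistent with \eqref{est:final-smallt} — but the intermediate claim as stated is too strong, and being precise here matters for the subsequent Gronwall step.
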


\begin{remark}\label{rem:C}
As can be seen from the proof, the constant $C$ appearing in \eqref{est:final-smallt} indeed only depends on some upper bounds on $\|\gamma\|_{L^\infty}$, $\|f(0,\cdot)\|_{W^{1,\infty}}$ and a lower bound on $f(0,0)$, and on $|a-b|$.
\end{remark}

In particular, Theorem~\ref{thm:smallt} implies the following description of the population's phenotypic diversity during transitory times, that is times $t$ satisfying $1\ll t\ll\varepsilon^{-\frac 23}$:

\begin{corollary}\label{cor:smallt}
Let $\gamma\in C^0(I)\cup L^\infty(I)$ such that $\int_I \gamma(x)\,dx=1$. Let $f(0,\cdot)\in W^{1,\infty}(I)$ satisfying $f(0,0)>0$ and $\int_I f(0,x)\,dx<1$.  There exists $C>0$ such that for $\kappa>0$ small enough, as soon as  $\varepsilon<\kappa$, the solution $f\in C^1(\mathbb R_+\times I)$ of \eqref{eqq0} satisfies
\[\forall t\in\left[\kappa^{-2},\kappa^{\frac 23}\varepsilon^{-\frac 23}\right],\quad\Bigg\|f(t,x)-\frac{f(0,x)\sqrt t e^{-x^2 t}}{f(0,0)\,\int_I e^{-y^2}\,dy}\Bigg\|_{L^1}\leq C\kappa.\]
\end{corollary}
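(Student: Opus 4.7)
The plan is to apply Theorem~\ref{thm:smallt} directly and then estimate each of the two terms on the right-hand side of \eqref{est:final-smallt} separately using the assumed bounds on $t$ and $\varepsilon$. The work is bookkeeping with exponents and constants, not new analysis.

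First, I would invoke Theorem~\ref{thm:smallt} to get
\[
\Bigg\|f(t,\cdot)-\frac{f(0,x)\sqrt t\, e^{-x^2 t}}{f(0,0)\int_I e^{-y^2}\,dy}\Bigg\|_{L^1(I)}\leq C\left(\frac{1}{\sqrt t}+\varepsilon\, t^{3/2}\,e^{C\varepsilon\, t}\right),
\]
valid for all $t\geq 0$ with the constant $C$ given by Remark~\ref{rem:C}. Now I would handle the two terms separately. For the first term, the assumption $t\geq \kappa^{-2}$ gives immediately $1/\sqrt t\leq \kappa$. For the second term, combining $\varepsilon<\kappa$ with $t\leq \kappa^{2/3}\varepsilon^{-2/3}$ yields the key estimate
\[
\varepsilon\, t\leq \kappa^{2/3}\varepsilon^{1/3}\leq \kappa,
\]
so that $e^{C\varepsilon t}\leq e^{C\kappa}\leq 2$ provided $\kappa$ is small enough (say $\kappa\leq (\ln 2)/C$). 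For the polynomial factor, the same upper bound on $t$ gives
\[
\varepsilon\, t^{3/2}\leq \varepsilon\,(\kappa^{2/3}\varepsilon^{-2/3})^{3/2}=\kappa,
\]
so $\varepsilon\, t^{3/2}\, e^{C\varepsilon t}\leq 2\kappa$. Putting the two estimates together produces a bound of the form $C'\kappa$, which (after absorbing the numerical constant into $C$) is the content of Corollary~\ref{cor:smallt}.

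The only point to be mindful of is choosing $\kappa$ small enough once and for all so that the exponential $e^{C\varepsilon t}$ stays uniformly bounded across the whole range $t\in[\kappa^{-2},\kappa^{2/3}\varepsilon^{-2/3}]$ and for every admissible $\varepsilon<\kappa$; this is the reason the statement restricts $\kappa$ to be small. There is no real obstacle beyond this elementary arithmetic, and the constant $C$ in the corollary depends only on the data mentioned in Remark~\ref{rem:C}.
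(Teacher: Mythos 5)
Your proposal is correct and follows essentially the same route as the paper: both apply Theorem~\ref{thm:smallt} and then bound $\frac{1}{\sqrt t}\leq\kappa$ from $t\geq\kappa^{-2}$, and $\varepsilon t^{3/2}\leq\kappa$, $\varepsilon t\leq\kappa^{2/3}\varepsilon^{1/3}\leq\kappa$ from $t\leq\kappa^{2/3}\varepsilon^{-2/3}$ and $\varepsilon<\kappa$, finally taking $\kappa$ small so the exponential is uniformly bounded. Your write-up is just slightly more explicit about the final absorption of $e^{C\kappa}$ into the constant.
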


These results hold for models which are slightly more general than equation (\ref{eqq0}). In fact, in both theorems one can assume that the competition term is a weighted population instead of the total population number. In Theorem \ref{thm:smallt}, one could also assume that the mutation kernel depends on the parents trait.

\section{Proof of Theorem \ref{theorem1} and of Corollary \ref{cor1}}

We start here the proof of Theorem \ref{theorem1}. We recall that
 $I=]a,b[$, $-\infty \leq a <0< b \leq \infty$, and $\gamma: = \gamma(x)$ is a
bounded, $C^1$ function with bounded derivative, such that $\gamma(x) > 0$ and $\int_{I}
\gamma(x) \,dx=1$. We begin with the study of the linear
operator associated to eq. (\ref{eqq0}).

\subsection{Spectrum of the linear operator}

Let us recall that
\begin{equation}
\label{eq2} (A_{\varepsilon}f) (x):=(1-\varepsilon)f(x)
-x^{2}\,f(x)+\varepsilon \gamma(x)\,\int_{I}f(y)\,dy
 \end{equation}
is the operator corresponding to the linear part in eq. (\ref{eqq0}). It
 acts on functions of the variable $x \in I$.
We begin with a basic lemma which enables to define the semigroup
associated with this operator.
\medskip

\begin{lemma}\label{lemanou} The linear operator $A_{\varepsilon}$, defined
 on $L^1(I)$ and with domain $D(A_{\varepsilon})=\{f \in
L^1(I) : \int_{I} x^{2}\, |f(x)|\, \,dx < \infty \}$, generates
an irreducible positive $C^0$-semigroup (denoted from now on by
$T_{\varepsilon}(t)$).
\end{lemma}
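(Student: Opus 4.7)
The plan is to split $A_\varepsilon$ into a multiplication part plus a bounded rank-one perturbation, handle each piece, and then combine them using standard perturbation theory for $C^0$-semigroups.

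First, I would write $A_\varepsilon = M_\varepsilon + B_\varepsilon$, where
\[
(M_\varepsilon f)(x) = \bigl((1-\varepsilon) - x^{2}\bigr) f(x),
\qquad
(B_\varepsilon f)(x) = \varepsilon\,\gamma(x)\int_{I} f(y)\,dy,
\]
with $D(M_\varepsilon) = D(A_\varepsilon)$. The operator $M_\varepsilon$ is the generator of the explicit multiplication semigroup $\bigl(T_M(t)f\bigr)(x) = e^{t[(1-\varepsilon)-x^{2}]} f(x)$ on $L^{1}(I)$; the $C^{0}$-property follows from dominated convergence, and since the multiplier is bounded above by $1-\varepsilon$, one has $\|T_M(t)\|_{\mathcal L(L^1)} \le e^{(1-\varepsilon)t}$. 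Positivity of $T_M(t)$ is immediate since $e^{t[(1-\varepsilon)-x^{2}]} > 0$. The perturbation $B_\varepsilon$ is bounded on $L^{1}(I)$ with $\|B_\varepsilon\|_{\mathcal L(L^1)} \le \varepsilon\|\gamma\|_{L^{1}} = \varepsilon$, since $\gamma \in L^\infty(I) \cap L^{1}(I)$, and it is manifestly positive.

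Next, I would apply the bounded perturbation theorem: $A_\varepsilon = M_\varepsilon + B_\varepsilon$ generates a $C^{0}$-semigroup $T_\varepsilon(t)$ on $L^{1}(I)$ with the same domain $D(A_\varepsilon)$. To establish positivity of $T_\varepsilon(t)$, I would invoke the Dyson--Phillips expansion
\[
T_\varepsilon(t) \;=\; T_M(t) + \sum_{n\ge 1}\int_{0 \le s_n \le \cdots \le s_1 \le t} T_M(t{-}s_1)\,B_\varepsilon\,T_M(s_1{-}s_2)\cdots B_\varepsilon\,T_M(s_n)\,ds\,,
\]
which converges in operator norm; each term is the composition of positive operators, hence positive, and so is the sum.

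Finally, for irreducibility I would take $f \in L^{1}(I)$ with $f \ge 0$, $f \not\equiv 0$, and show that $T_\varepsilon(t)f > 0$ almost everywhere on $I$ for every $t > 0$ (this is the standard strict-positivity criterion that rules out any nontrivial closed ideal of $L^{1}(I)$). Using the variation-of-constants identity
\[
T_\varepsilon(t)f \;=\; T_M(t)f \;+\; \int_{0}^{t} T_M(t-s)\,B_\varepsilon\,T_\varepsilon(s)f\,ds,
\]
I observe that the integrand evaluated at $x$ equals $e^{(t-s)[(1-\varepsilon)-x^{2}]}\,\varepsilon\,\gamma(x) \int_{I} T_\varepsilon(s)f(y)\,dy$. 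Since $T_\varepsilon(s)f \ge T_M(s)f$ by the Dyson--Phillips expansion and $T_M(s)f$ is nonzero whenever $f$ is nonzero, the $y$-integral is strictly positive for each $s>0$. Combined with the assumption $\gamma(x) > 0$ everywhere on $I$, this forces $T_\varepsilon(t)f(x) > 0$ a.e.\ on $I$, proving irreducibility.

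The routine parts are the perturbation and positivity of the Dyson--Phillips series; the only point that needs care is verifying that $B_\varepsilon$ really is $M_\varepsilon$-bounded (which is trivial here because $B_\varepsilon$ is globally bounded on $L^{1}$, so the domain $D(A_\varepsilon) = D(M_\varepsilon)$ is unaffected). The mild obstacle is the irreducibility argument: one must combine the variation-of-constants formula with the hypothesis $\gamma > 0$ throughout $I$, which is precisely what the paper assumes, and with the observation that $T_M(s)f \not\equiv 0$ as soon as $f \not\equiv 0$ because the multiplier $e^{s[(1-\varepsilon)-x^{2}]}$ never vanishes.
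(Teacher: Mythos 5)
Your proof is correct and follows essentially the same route as the paper: both decompose $A_\varepsilon$ into the multiplication operator $(1-\varepsilon-x^2)\cdot$ plus the positive bounded rank-one perturbation $\varepsilon\gamma\int_I\,\cdot$, and both deduce irreducibility from the strict positivity of $\gamma$. The only difference is that the paper invokes Corollary 9.22 of Cl\'ement et al.\ for the irreducibility step, whereas you give a self-contained argument via the Dyson--Phillips expansion and the variation-of-constants formula; this is a valid and slightly more explicit version of the same mechanism.
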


\begin{proof} The multiplication linear operator
$(A_{\varepsilon}^{0}f)(x) := (1-\varepsilon)f(x) -x^{2}\,f(x)$ is
the generator of a positive $C^0$-semigroup.
Since $\gamma$ is strictly positive,
$A_{\varepsilon}-A_{\varepsilon}^0 $ is a positive bounded perturbation whose
only invariant closed ideals are ${0}$ and the whole space $L^1(I)$.
So $T_{\varepsilon}(t)$ is irreducible (see \cite{clement},
Corollary 9.22).
\end{proof}

Next, we present a proposition which gives information about the
spectrum of $A_{\varepsilon}$.
\medskip

\begin{proposition}
\label{proposition1}
 The linear operator $A_{\varepsilon}$ has only one eigenvalue. It is a strictly dominant algebraically simple eigenvalue $\lambda_{\varepsilon}
>1-\varepsilon$ and a pole of the resolvent, with corresponding normalized positive eigenvector $$ \psi_{\varepsilon}(x)=\frac{\varepsilon \gamma(x)}{\lambda_{\varepsilon}-(1-\varepsilon-x^{2})}. $$ Moreover, for $\varepsilon$ small enough, $\lambda_{\varepsilon}<1$.
\par
The rest of the spectrum of the linear operator $A_{\varepsilon}$ is
equal to the interval $J =
[\min(1-\varepsilon-a^2,1-\varepsilon-b^2), 1-\varepsilon].$
\end{proposition}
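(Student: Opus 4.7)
The plan is to reduce the eigenvalue problem to a scalar characteristic equation, to import strict dominance, algebraic simplicity and the pole property from the Perron--Frobenius theory for irreducible positive $C^0$-semigroups (available thanks to Lemma~\ref{lemanou}), and to describe the rest of the spectrum as the essential spectrum of the multiplication part $A_\varepsilon^0$, which is stable under the rank-one perturbation $A_\varepsilon - A_\varepsilon^0$.

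First I would look at the eigenvalue equation $A_\varepsilon f = \lambda f$, which reads
\[(\lambda - (1-\varepsilon) + x^2)\,f(x) = \varepsilon\,\gamma(x)\,\mathcal I, \qquad \mathcal I := \int_I f.\]
If $\mathcal I = 0$ then the multiplier being nonzero almost everywhere forces $f \equiv 0$, so after normalizing $\mathcal I = 1$ one has $f(x) = \varepsilon\gamma(x)/(\lambda - (1-\varepsilon) + x^2)$ and the consistency relation
\[g_\varepsilon(\lambda) := \int_I \frac{\varepsilon\,\gamma(x)}{\lambda - (1-\varepsilon) + x^2}\,dx = 1.\]
Taking the imaginary part of $g_\varepsilon(\mu + i\nu) = 1$ forces $\nu = 0$, so every eigenvalue is real. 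On $(1-\varepsilon,+\infty)$ the function $g_\varepsilon$ is smooth and strictly decreasing, vanishes at infinity, and diverges at $(1-\varepsilon)^+$ because $\gamma(0) > 0$, producing a unique real root $\lambda_\varepsilon > 1-\varepsilon$; for real $\lambda \in [\min J, 1-\varepsilon]$ the resulting $f$ fails to lie in $L^1(I)$ (the multiplier vanishes in $\overline I$), and for $\lambda < \min J$ the integrand has a constant negative sign, excluding further eigenvalues.

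To obtain $\lambda_\varepsilon < 1$ when $\varepsilon$ is small I would use the asymptotic estimate
\[g_\varepsilon(1) \leq \|\gamma\|_{L^\infty}\int_\R \frac{\varepsilon}{\varepsilon + x^2}\,dx = \pi\sqrt\varepsilon\,\|\gamma\|_{L^\infty} \xrightarrow[\varepsilon \to 0]{} 0,\]
combined with the monotonicity of $g_\varepsilon$. For strict dominance, algebraic simplicity and the pole property, I would invoke the standard fact (see e.g.~\cite{clement}) that for an irreducible positive $C^0$-semigroup the spectral bound $s(A_\varepsilon)$, whenever it belongs to $\sigma(A_\varepsilon)$, is a strictly dominant algebraically simple eigenvalue, a pole of the resolvent, with a strictly positive eigenvector. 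Since $\lambda_\varepsilon$ is already an eigenvalue with the strictly positive eigenvector $\psi_\varepsilon$, it must coincide with $s(A_\varepsilon)$ and therefore inherits all these properties.

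Finally, $A_\varepsilon^0 : f \mapsto (1-\varepsilon-x^2)\,f$ is a multiplication operator whose spectrum is the essential range of $x \mapsto 1-\varepsilon-x^2$ over $I$, which is precisely $J$ (the maximum $1-\varepsilon$ is attained at $0 \in I$ and the lower endpoint is reached in the closure). Because $A_\varepsilon - A_\varepsilon^0$ has one-dimensional range, Weyl-type invariance of the essential spectrum gives $\sigma_{\rm ess}(A_\varepsilon) = J$, and every point of $\sigma(A_\varepsilon) \setminus J$ is then an isolated eigenvalue of finite algebraic multiplicity; the analysis above identifies $\lambda_\varepsilon$ as the unique such point, so $\sigma(A_\varepsilon) = J \cup \{\lambda_\varepsilon\}$. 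The hardest part will be exactly this final step: one must pick a notion of essential spectrum in $L^1(I)$ that is at once stable under finite-rank perturbations and returns the full interval $J$ for $A_\varepsilon^0$, and must rule out eigenvalues hiding inside $J$ with a watertight integrability argument near the accumulation endpoint $1-\varepsilon$ where $g_\varepsilon$ blows up.
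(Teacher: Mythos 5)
Your overall plan is sound and closely parallels the paper in the computational core: you derive the same scalar characteristic equation $g_\varepsilon(\lambda)=1$, use the imaginary-part trick to rule out non-real roots, use monotonicity of $g_\varepsilon$ on $(1-\varepsilon,\infty)$ to get a unique $\lambda_\varepsilon>1-\varepsilon$, and your explicit bound $g_\varepsilon(1)\le\pi\sqrt\varepsilon\,\|\gamma\|_\infty$ is a clean alternative to the paper's dominated-convergence argument for $\lambda_\varepsilon<1$. Where you genuinely diverge is in how you get strict dominance, algebraic simplicity, the pole property, and the identification of the rest of the spectrum. The paper is fully explicit: it exhibits a Weyl-type sequence $f_n$ concentrating at $x_0$ to show $J\subset\sigma(A_\varepsilon)$, then solves $(A_\varepsilon-\lambda)f=g$ in closed form via equations \eqref{eq4}--\eqref{eq5}, reads off $\sigma(A_\varepsilon)=J\cup\{F_\varepsilon=1\}$ directly, shows algebraic simplicity by noting that $\psi_\varepsilon$ is not in the range of $A_\varepsilon-\lambda_\varepsilon$ (both sides of \eqref{eq5} have the wrong sign), and shows the range is closed to deduce the pole property. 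You instead outsource to abstract theory (Weyl stability of essential spectra under rank-one perturbations and Perron--Frobenius for irreducible positive semigroups), which would buy generality for more general compact perturbations but at the cost of relying on heavier machinery; the paper's resolvent formula is also reused later in the Laplace-transform proof of Lemma~\ref{lem6}, so the explicit route pays off downstream.

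One citation you lean on is stated too strongly and, as written, creates a logical gap. The claim that for an irreducible positive $C^0$-semigroup ``the spectral bound, whenever it belongs to the spectrum, is a strictly dominant algebraically simple eigenvalue and a pole'' is false in general: the heat semigroup on $L^1(\mathbb{R})$ is irreducible and positive, its spectral bound $0$ is in the spectrum, but it is not an eigenvalue, not a pole, and not isolated. The correct hypotheses in \cite{clement} are that the spectral bound be a pole of the resolvent (which then yields a first-order pole, algebraic simplicity, and strictly positive eigenvectors for $A_\varepsilon$ and $A_\varepsilon^*$), with strict dominance coming separately from a spectral gap (e.g.\ $\omega_{\mathrm{ess}}<\omega_0$ as in Lemma~\ref{lemma2.0}). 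Your own Weyl argument actually supplies the missing pole property --- once you know $\sigma_{\mathrm{ess}}(A_\varepsilon)=J$ and $\lambda_\varepsilon>1-\varepsilon=\sup J$, the point $\lambda_\varepsilon$ lies in a component of $\mathbb{C}\setminus\sigma_{\mathrm{ess}}$ meeting the resolvent set and is therefore an isolated eigenvalue of finite algebraic multiplicity, i.e.\ a pole --- so the fix is to reorder: do the Weyl identification first, deduce the pole property, then invoke the correct Perron--Frobenius statement for simplicity and positivity. The assertion that a positive eigenvector forces $\lambda_\varepsilon=s(A_\varepsilon)$ also deserves a word; here it can be bypassed since your spectral picture $\sigma(A_\varepsilon)=J\cup\{\lambda_\varepsilon\}$ with $\lambda_\varepsilon>\sup J$ already gives $s(A_\varepsilon)=\lambda_\varepsilon$ for free.
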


\begin{proof} In the sequel, the norm $||\,\, ||$ is the $L^1$ norm on $I$.
Let us first show that any $\lambda$ belonging to the set
$J=\text{Range}(1-\varepsilon -x^2)$ belongs to the spectrum of
$A_{\varepsilon}$. In order to do this, for $\lambda=1-\varepsilon
-x_0^2, x_0 \in \mathring{I}$, let us define
$f_n(x)=\frac{n}{2}\left(\chi_{[x_0,x_0+\frac{1}{n}]}(x)-\chi_{[x_0-\frac{1}{n},x_0]}(x)\right)$
for $n$ such that $[x_0-\frac{1}{n},x_0+\frac{1}{n}]\subset I$. We
then have $\|f_n\|=1$ and $\left\|(A_{\varepsilon}-\lambda Id)f_n\right\| =
\frac{n}{2}\int_{x_0-\frac{1}{n}}^{x_0+\frac{1}{n}}|x^2-x_0^2|dx
\rightarrow 0$. So $(\min(1-\varepsilon-a^2,1-\varepsilon-b^2),
1-\varepsilon]$ is contained in the spectrum of $A_{\varepsilon}$.
The claim follows from the fact that the spectrum is a closed set.

On the other hand, notice that (for $x_0 \in I$), $1-\varepsilon
-x_0^2$ is not an eigenvalue, since the potential corresponding
eigenfunction $\frac{\gamma(x)}{x_0^2-x^2}$ is not an integrable
function on $I$ (remember that $\gamma$ does not vanish).

 Let us now compute the resolvent operator of $A_{\varepsilon}$, that is,
 let us try to solve the equation
 \begin{equation}
 \label{eq3}
  A_{\varepsilon}f-\lambda f= g \in L^1(I).
\end{equation}
For $\lambda \notin J$, defining $p:= \int_{I}f(y)\,dy$,
(\ref{eq3}) gives
\begin{equation}
\label{eq4} f(x)=\frac{\varepsilon
\gamma(x)p-g(x)}{\lambda-(1-\varepsilon-x^2)} .
\end{equation}
Integrating, we get
\begin{equation}
 \label{eq5}
\bigg(1-\varepsilon
\int_{I}\frac{\gamma(x)}{\lambda-(1-\varepsilon-x^2)}\,dx
\bigg)\,p
=\int_{I}\frac{-g(x)}{\lambda-(1-\varepsilon-x^2)}\,dx ,
\end{equation}
and $\lambda$ belongs to the resolvent set unless the factor of $p$
on the left hand side vanishes. Therefore $\sigma(A)=J \cup \{
\lambda \in \mathbb{C} :\varepsilon
\int_{I}\frac{\gamma(x)}{\lambda-(1-\varepsilon-x^2)}\,dx=1
\}$.
\par
Since for any real number $\lambda>1-\varepsilon$, the function
$F_{\varepsilon}(\lambda):= \varepsilon
\int_{I}\frac{\gamma(x)}{\lambda-(1-\varepsilon-x^2)}\,dx$ is
continuous, strictly decreasing, and satisfies $\lim_{\lambda \to
1-\varepsilon}F_{\varepsilon}(\lambda)=+\infty$ (recall that
$\gamma(0)>0$) and $\lim_{\lambda \to
+\infty}F_{\varepsilon}(\lambda)=0$, we see that there is a unique
real solution of $F_{\varepsilon}(\lambda)=1$ in
$(1-\varepsilon,\infty)$. We denote it by $\lambda_{\varepsilon}$.
\par
Taking $g(x)=0$ in (\ref{eq3}), we see that $\lambda_{\varepsilon}$
is an eigenvalue with corresponding normalized strictly positive
eigenvector
$$\psi_{\varepsilon}=\frac{\varepsilon
\gamma(x)}{\lambda_{\varepsilon}-\left(1-\varepsilon-x^2\right)}.$$
\par
Taking $g(x)=\psi_{\varepsilon}$ and $\lambda = \lambda_\varepsilon$
we see that the left hand side in (\ref{eq5}) vanishes, whereas the
right hand side is strictly negative, so that
$A_{\varepsilon}f-\lambda_{\varepsilon}f=\psi_{\varepsilon}$ has no
solution and hence $\lambda_{\varepsilon}$ is algebraically simple.\\
\par
Indeed, it also follows from (\ref{eq5}) that the range of
$A_{\varepsilon}-\lambda_{\varepsilon}\, Id$ coincides with the
kernel of the linear form defined on $L^1(I)$ by the $L^\infty$
function $\frac{1}{\lambda_\var-(1-\varepsilon)+x^2}$ (which is the
eigenvector corresponding to the eigenvalue $\lambda_{\varepsilon}$
of the adjoint operator $A_{\varepsilon}^{*}$) and hence it is a
closed subspace of $L^1(I)$. Therefore, $\lambda_{\varepsilon}$ is a
pole of the resolvent (see Theorem A.3.3 of \cite{clement}).
Furthermore, since
$$
F_{\varepsilon}(1) = \varepsilon
\int_{I}\frac{\gamma(x)}{\varepsilon
+x^{2}}\,dx=\int_{I}\frac{\gamma(x)}{1
+\left(\frac{x}{\sqrt{\varepsilon}}\right)^{2}}\,dx
\stackrel{\scriptscriptstyle \varepsilon \rightarrow
0}{\displaystyle \longrightarrow}0,
$$
we see that  $F_{\varepsilon}(1)< 1$ for $\varepsilon$ small enough,
and hence $\lambda_{\varepsilon}<1$.
\par
Substituting $\lambda$ by $a+bi$ in the characteristic equation
\begin{equation}
\label{characteristic} 1+\varepsilon
\int_{I}\frac{\gamma(x)}{(1-\varepsilon-x^2-\lambda)}\,dx=0
\end{equation}
we have that the imaginary part is $-\varepsilon b
\int_{I}\frac{\gamma(x)}{(1-\varepsilon-x^2-\lambda)}\,dx$.
Since $\gamma(x)>0$, there are no non real solutions of
\eqref{characteristic}
\end{proof}
\begin{remark}
Note that $\lim_{\varepsilon \to 0} \lambda_{\varepsilon}=1.$
\end{remark}

We now write an expansion of the eigenvalue $\lambda_{\varepsilon}$.
\medskip

\begin{proposition}
\label{proposition2}
 Let $\lambda_{\varepsilon}$ be the dominant eigenvalue of the operator $A_{\varepsilon}$. Then
$$
\left|\lambda_{\varepsilon}-(1-\varepsilon)-\gamma(0)^2\pi^{2}\varepsilon^{2}\right|=O\left(\varepsilon^3\ln{\frac{1}{\varepsilon}}\right)
$$
\end{proposition}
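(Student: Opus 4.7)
The plan is to work directly from the characterization of $\lambda_\varepsilon$ in Proposition \ref{proposition1}. Writing $\alpha_\varepsilon := \lambda_\varepsilon - (1-\varepsilon) > 0$, the defining relation $F_\varepsilon(\lambda_\varepsilon)=1$ reads
$$ \varepsilon \int_I \frac{\gamma(x)}{\alpha_\varepsilon + x^2}\,dx \;=\; 1, $$
so the proposition is equivalent to proving $\alpha_\varepsilon = \gamma(0)^2\pi^2\varepsilon^2 + O(\varepsilon^3\ln(1/\varepsilon))$.

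First I would establish the two-term expansion
$$ \int_I \frac{\gamma(x)}{\alpha + x^2}\,dx \;=\; \frac{\gamma(0)\pi}{\sqrt{\alpha}} + O\bigl(\ln(1/\alpha)\bigr) \quad\text{as } \alpha \to 0^+. $$
The leading term is obtained by freezing $\gamma$ at $0$ and evaluating the resulting integral explicitly through $\arctan$; since $a<0<b$, both boundary contributions are $O(1)$, yielding $\gamma(0)\pi/\sqrt{\alpha} + O(1)$. The logarithmic error comes from the correction $\int_I (\gamma(x)-\gamma(0))/(\alpha+x^2)\,dx$: splitting at some fixed $\delta>0$ with $[-\delta,\delta]\subset I$ and using $|\gamma(x)-\gamma(0)|\le \|\gamma'\|_{L^\infty}|x|$, the bound $\int_{-\delta}^{\delta} |x|/(\alpha+x^2)\,dx = \ln(1+\delta^2/\alpha)$ produces the $O(\ln(1/\alpha))$ contribution on $[-\delta,\delta]$, whereas on $I\setminus[-\delta,\delta]$ the integrand is dominated uniformly in $\alpha$ by an integrable function (using $\gamma\in L^1\cap L^\infty(I)$ and $1/(\alpha+x^2)\le 1/x^2\le 1/\delta^2$).

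Substituting this expansion into the characteristic equation yields
$$ \frac{\varepsilon\gamma(0)\pi}{\sqrt{\alpha_\varepsilon}} \;=\; 1 + O\bigl(\varepsilon\,\ln(1/\alpha_\varepsilon)\bigr), $$
and I would then conclude by a one-step bootstrap. A crude reading already forces $\sqrt{\alpha_\varepsilon}\sim \gamma(0)\pi\varepsilon$, so in particular $\ln(1/\alpha_\varepsilon) = O(\ln(1/\varepsilon))$; reinjecting gives $\sqrt{\alpha_\varepsilon} = \gamma(0)\pi\varepsilon + O(\varepsilon^2\ln(1/\varepsilon))$, and squaring produces the announced estimate.

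The only real subtlety is this bootstrap: one cannot replace $\ln(1/\alpha_\varepsilon)$ by $\ln(1/\varepsilon)$ without first knowing the leading order of $\alpha_\varepsilon$, and this mechanism is precisely what forces the logarithmic factor in the final remainder. Everything else is a routine calculation based on an explicit $\arctan$ integration and on the $C^1$ regularity of $\gamma$ at $0$.
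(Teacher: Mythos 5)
Your proposal is correct and follows essentially the same route as the paper: both expand the characteristic integral by freezing $\gamma$ at $0$, extract the $O(\ln(1/\alpha))$ correction from the $C^1$ bound on $\gamma$ near the origin, and carry out the same one-step bootstrap (first $\sqrt{\alpha_\varepsilon}\asymp\varepsilon$, hence $\ln(1/\alpha_\varepsilon)=O(\ln(1/\varepsilon))$, then reinject and square). The paper normalizes by the substitution $x=\nu_\varepsilon z$ whereas you evaluate the $\arctan$ directly, but these two calculations are interchangeable.
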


\begin{proof}
 Let us consider the change of variable $x=\nu_{\varepsilon}z$ where $\nu_{\varepsilon}=\sqrt{\lambda_{\varepsilon}-(1-\varepsilon)}$. We have
$$
1=\varepsilon
\int_{a}^{b}\frac{\gamma(x)}{(\lambda_{\varepsilon}-(1-\varepsilon-x^2))}\,dx=\varepsilon
\int_{\frac{a}{\nu_{\varepsilon}}}^{\frac{b}{\nu_{\varepsilon}}}\frac{\gamma(\nu_{\varepsilon}z)}
{\nu_{\varepsilon}^{2}+(\nu_{\varepsilon}z)^2}
\nu_{\varepsilon}\,dz=\frac{\varepsilon}{\nu_{\varepsilon}}
\int_{\frac{a}{\nu_{\varepsilon}}}^{\frac{b}{\nu_{\varepsilon}}}\frac{\gamma(\nu_{\varepsilon}z)}
{1+z^2}\,dz.
$$
Then
$$
\begin{array}{rcl}
\Big| \frac{\nu_{\varepsilon}}{\varepsilon}- \gamma(0)\pi\Big| &=& \Big|\int_{\frac{a}{\nu_{\varepsilon}}}^{\frac{b}{\nu_{\varepsilon}}}\frac{\gamma(\nu_{\varepsilon}z)}
{1+z^2}\,dz-\gamma(0)\pi\Big|\\ \\
& \leq & \Big|\int_{\frac{a}{\nu_{\varepsilon}}}^{\frac{b}{\nu_{\varepsilon}}}\frac{\gamma(\nu_{\varepsilon}z)}
{1+z^2}\,dz- \int_{\mathbb{R}}\frac{\gamma(\nu_{\varepsilon}z)}
{1+z^2}\,dz\Big|+ \Big|  \int_{\mathbb{R}}\frac{\gamma(\nu_{\varepsilon}z)-\gamma(0))}
{1+z^2}\,dz\Big|\\ \\

& \leq & 4 \|\gamma\|_{\infty}\int_{\frac{B}{\nu_{\varepsilon}}}^{+\infty}\frac{\,dz}{1+z^2}+2 \|\gamma'\|_{\infty}\nu_{\varepsilon}\int_{0}^{\frac{A}{\nu_{\varepsilon}}}\frac{z}{1+z^2}\,dz
\end{array}$$
where we have used
$$
|\gamma(\nu_{\varepsilon}z)-\gamma(0)|\leq \min \left(\|\gamma\|_{\infty},\|\gamma'\|_{\infty} \nu_{\varepsilon}|z|\right)
$$
and have denoted $A:= \frac{\|\gamma\|_{\infty}}{\|\gamma'\|_{\infty}}$ and $B:=\min (|a|, b, A)$.\\
Since
$$
4 \|\gamma\|_{\infty}\int_{\frac{B}{\nu_{\varepsilon}}}^{+\infty}\frac{\,dz}{1+z^2} = 4 \|\gamma\|_{\infty} \arctan{\left(\frac{\nu_{\varepsilon}}{B}\right)} \leq 4 \|\gamma\|_{\infty}\frac{\nu_{\varepsilon}}{B}
$$
and
$$
2 \|\gamma'\|_{\infty}\nu_{\varepsilon}\int_{0}^{\frac{A}{\nu_{\varepsilon}}}\frac{z}{1+z^2}\,dz= \|\gamma'\|_{\infty}\nu_{\varepsilon} \ln{\left(1+\frac{A^2}{\nu_{\varepsilon}^2}\right)}
$$
we obtain
\begin{equation}
\label{ine10}
\Big|\nu_{\varepsilon}-\varepsilon\gamma(0)\pi\Big| \leq \varepsilon \nu_{\varepsilon} \left(\frac{4 \|\gamma\|_{\infty}}{B} + \|\gamma'\|_{\infty}
 \ln{\left(1+\frac{A^2}{\nu_{\varepsilon}^2}\right)} \right)
\end{equation}
which implies
\begin{align*}
&\varepsilon \left( \gamma(0)\pi - \nu_{\varepsilon}\left(\frac{4 \|\gamma\|_{\infty}}{B} + \|\gamma'\|_{\infty}
 \ln{(1+\frac{A^2}{\nu_{\varepsilon}^2})} \right) \right) \\
 &\quad \leq  \nu_{\varepsilon} \leq \varepsilon \left( \gamma(0)\pi + \nu_{\varepsilon}\left(\frac{4 \|\gamma\|_{\infty}}{B} + \|\gamma'\|_{\infty}
 \ln{\left(1+\frac{A^2}{\nu_{\varepsilon}^2}\right)} \right) \right).
\end{align*}
Since
$$
\nu_{\varepsilon}\left(\frac{4 \|\gamma\|_{\infty}}{B} + \|\gamma'\|_{\infty}
 \ln{\left(1+\frac{A^2}{\nu_{\varepsilon}^2}\right)} \right)
\stackrel{\scriptscriptstyle \varepsilon \rightarrow
0}{\displaystyle \longrightarrow}0
$$
we have
\begin{equation}
 \label{ineprop1}
 \frac{\gamma(0)\pi\varepsilon}{2}\leq \nu_{\varepsilon} \leq 2\, \gamma(0)\pi\varepsilon.
\end{equation}
for $\varepsilon$ small enough.\\
Therefore, using \eqref{ineprop1} in \eqref{ine10} we get
\begin{equation}
 \label{ineprop2}
\Big|\nu_{\varepsilon}-\varepsilon\gamma(0)\pi\Big| \leq \varepsilon^2 2 \gamma(0) \pi  \left(\frac{4 \|\gamma\|_{\infty}}{B} + \|\gamma'\|_{\infty}
 \ln{\left(1+\frac{4A^2}{\gamma(0)^2 \pi^{2} \varepsilon^{2}}\right)} \right) \leq C \varepsilon^2 \ln{\left(\frac{1}{\varepsilon}\right)}.
\end{equation}
Finally, by \eqref{ineprop1} and \eqref{ineprop2},

$$
|\lambda_{\varepsilon}-(1-\varepsilon)-\gamma(0)^2\pi^{2}\varepsilon^{2}|=
|\nu_{\varepsilon}+\gamma(0)\pi\varepsilon|\;|\nu_{\varepsilon}-\gamma(0)\pi\varepsilon|\leq 3 \gamma(0) \pi C\varepsilon^3 \ln{\left(\frac{1}{\varepsilon}\right)}.
$$

\end{proof}

\subsection{Asymptotic behavior of the nonlinear equation}

Let us start this subsection with a lemma in which properties of the
spectrum of $\tilde{A}_{\varepsilon}=A_{\varepsilon}-\lambda_{\varepsilon}Id$ are used to study the
asymptotic behavior of the semigroup $\tilde{T}_{\varepsilon}(t)$
generated by $\tilde{A}_{\varepsilon}$.

\begin{lemma}
\label{lemma2.0}
\begin{itemize}
\item[a)] The essential growth bound of the semigroup generated by $\tilde{A}_{\varepsilon}$ is $\omega_{ess}(\tilde{T}_{\varepsilon}) = 1-\varepsilon-\lambda_{\varepsilon}.$
\item[b)] The growth bound of the semigroup generated by $\tilde{A}_{\varepsilon}$ is $\omega_{0}(\tilde{T}_{\varepsilon}) = 0.$
\end{itemize}
\end{lemma}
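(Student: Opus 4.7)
The plan is to decompose $\tilde{A}_\varepsilon = (A_\varepsilon^0 - \lambda_\varepsilon\, Id) + K$, where $(A_\varepsilon^0 f)(x) = (1-\varepsilon-x^2)\, f(x)$ is the multiplication operator already used in Lemma \ref{lemanou}, and $Kf = \varepsilon\, \gamma(\cdot)\, \int_I f(y)\, dy$ is a rank-one (hence compact) bounded perturbation. Since shifting a generator by a scalar shifts both $\omega_0$ and $\omega_{ess}$ by the same amount, I will first analyze $T_\varepsilon$ itself and subtract $\lambda_\varepsilon$ at the end. The overall strategy is standard: a compact perturbation preserves the essential growth bound of a $C_0$-semigroup, and the part of the spectrum lying strictly above the essential spectrum consists only of isolated eigenvalues of finite algebraic multiplicity that are poles of the resolvent.

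For assertion (a), the multiplication semigroup $T^0_\varepsilon(t) f(x) = e^{(1-\varepsilon-x^2)t} f(x)$ has operator norm $e^{(1-\varepsilon)t}$ on $L^1(I)$ (since $0 \in I$), and this norm is also essential because a multiplication operator by a continuous function whose essential range has nonempty interior cannot be approximated by compact operators. Hence $\omega_{ess}(T^0_\varepsilon) = 1 - \varepsilon$. To transfer this to $T_\varepsilon$, I will use the Duhamel identity $T_\varepsilon(t) - T^0_\varepsilon(t) = \int_0^t T_\varepsilon(t-s)\, K\, T^0_\varepsilon(s)\, ds$: each integrand is compact because $K$ is, and a weak-convergence argument shows the integral itself is compact, so $T_\varepsilon$ and $T^0_\varepsilon$ share the same essential growth bound. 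Subtracting $\lambda_\varepsilon$ gives $\omega_{ess}(\tilde{T}_\varepsilon) = 1 - \varepsilon - \lambda_\varepsilon$.

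For assertion (b), I invoke the identity $\omega_0(\tilde{T}_\varepsilon) = \max\bigl(\omega_{ess}(\tilde{T}_\varepsilon),\, s(\tilde{A}_\varepsilon)\bigr)$, which holds here because the compact-perturbation structure restricts the spectrum of $\tilde{A}_\varepsilon$ above $\omega_{ess}$ to a finite collection of poles of the resolvent. By Proposition \ref{proposition1}, $\lambda_\varepsilon$ is the strictly dominant eigenvalue of $A_\varepsilon$, with all remaining spectrum contained in $J \subset (-\infty, 1-\varepsilon]$; so $0$ is the strictly dominant spectral point of $\tilde{A}_\varepsilon$ and $s(\tilde{A}_\varepsilon) = 0$. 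Since $1 - \varepsilon - \lambda_\varepsilon < 0$ for $\varepsilon$ small, the maximum is $0$, giving $\omega_0(\tilde{T}_\varepsilon) = 0$.

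The main obstacle, in my view, is the rigorous justification of the identity $\omega_0 = \max(\omega_{ess}, s)$ in the non-reflexive space $L^1(I)$, where for a generic positive $C_0$-semigroup one can have $\omega_0 > s$. This pathology is prevented here by the combination of the compact-perturbation structure, which reduces the spectrum above $\omega_{ess}$ to isolated eigenvalues of finite algebraic multiplicity, together with the fact (already established in Proposition \ref{proposition1}) that $\lambda_\varepsilon$ is a pole of the resolvent. The Laurent expansion of the resolvent at this pole then decomposes $\tilde{T}_\varepsilon$ as a direct sum on the finite-dimensional generalized eigenspace and on its complement, from which the desired equality of growth bound and spectral bound above the essential threshold follows.
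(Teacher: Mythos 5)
Your part (a) is essentially the paper's proof: the same decomposition of $\tilde{A}_\varepsilon$ into the multiplication operator $\tilde{A}_\varepsilon^0$ plus the rank-one operator $K$, the same invocation of stability of $\omega_{ess}$ under compact perturbation (the paper simply cites \cite{nagel} where you sketch a Duhamel argument), and the same identification of the essential growth bound of the multiplication semigroup.

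For part (b), you and the paper take genuinely different routes, though both are correct. You use the general identity $\omega_0(\tilde T_\varepsilon)=\max\bigl(\omega_{ess}(\tilde T_\varepsilon),\,s(\tilde A_\varepsilon)\bigr)$, which holds for any $C_0$-semigroup: above $\omega_{ess}$ the spectrum of the generator consists only of finitely many poles of the resolvent of finite algebraic multiplicity, and the spectral decomposition then forces the growth bound to equal whichever of $\omega_{ess}$ and $s$ is larger. Since you have $\omega_{ess}=1-\varepsilon-\lambda_\varepsilon<0=s(\tilde A_\varepsilon)$, this yields $\omega_0=0$. The paper instead exploits positivity: the spectral mapping theorem (or rather the identity $s(A)=\omega_0(T)$) holds for every positive $C_0$-semigroup on $L^1$ by the Derndinger--Greiner type theorem cited from \cite{clement}. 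That route is shorter and does not need to know $\omega_{ess}<s$ in advance, but it is restricted to positive semigroups on $L^1$ (or more generally $L^p$-type spaces). Your route is more general and does not invoke positivity, at the cost of relying on the finer structure of the spectrum above the essential growth bound established in part (a). One small caveat: you attribute the discreteness of $\sigma(\tilde A_\varepsilon)$ above $\omega_{ess}$ to the compact-perturbation structure, whereas it is really a general consequence of the definition of $\omega_{ess}$ for $C_0$-semigroups (Theorem~9.10--9.11 of \cite{clement}), valid regardless of how $\omega_{ess}$ was computed; the phrasing should be adjusted, but the mathematics is sound.
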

\begin{proof}
\begin{itemize}
\item[a)] $\tilde{A}_{\varepsilon}$ is a compact (one rank)
perturbation of
$\tilde{A}_{\varepsilon}^0f:=(1-\varepsilon-x^2-\lambda_{\varepsilon})f.$
Then
$\omega_{ess}\left(\tilde{T}_{\varepsilon}\right)=\omega_{ess}\left(\tilde{T}_{\varepsilon}^0\right)$
where $\tilde{T}_{\varepsilon}^0(t)$ is the semigroup generated by
$\tilde{A}_{\varepsilon}^0$ (see \cite{nagel}).

Since $\tilde{A}_{\varepsilon}^0$ is a multiplication operator,
$\omega_{ess}\left(\tilde{T}_{\varepsilon}^0\right) =
1-\varepsilon-\lambda_{\varepsilon}$ and the result follows.

\item[b)] By Proposition \ref{proposition1}, the spectral bound of $\tilde{A}_{\varepsilon}$ is $0$ and
the spectral mapping theorem holds for any positive
$C^{0}-$semigroup on $L^{1}$ (see \cite{clement}).
\end{itemize}
\end{proof}

Let us now write, for a positive non identically zero $f_0$,
$\left(\tilde{T}_{\varepsilon}(t)\right)f_{0}(x)=c_{f_{0}}\psi_{\varepsilon}(x)+v(t,x)$
where $\psi_{\varepsilon}(x)$ is
 the eigenvector corresponding to the eigenvalue 0 of $\tilde{A}_{\varepsilon}$ and
$c_{f_{0}}\psi_{\varepsilon}(x)$ is the spectral projection of $f_0$
on the kernel of $\tilde{A}_{\varepsilon}$ (Note that $c_{f_{0}}>0$
since $f_{0}$ is positive and $\tilde{A}_{\varepsilon}$ is the
generator of an irreducible positive semigroup). We also  define
$\varphi(t) := \int_I v(t,x)\, dx$.
The following lemma gives the asymptotic behavior of $c_{f_0}$:

\begin{lemma}
\label{lemma4} Let us assume that $f_0$ is a positive integrable
function on $I$. Then there exist positive constants $K_1$, $K_2$
(independent of $\varepsilon$ but depending on $f_0$) such that
$K_1\,\varepsilon^2 \leq c_{f_0} \leq K_2.$ Moreover,
$\lim_{\varepsilon \to 0} c_{f_0} = 0.$
\end{lemma}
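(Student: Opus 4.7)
The plan is to exploit the explicit formula for the spectral projection on the simple eigenvalue $\lambda_\varepsilon$. Since $\lambda_\varepsilon$ is algebraically simple (Proposition \ref{proposition1}) and the proof of that proposition identifies the adjoint eigenvector as $\phi^*_\varepsilon(x)=\frac{1}{\nu_\varepsilon^2+x^2}$ (where $\nu_\varepsilon^2:=\lambda_\varepsilon-(1-\varepsilon)$), the spectral projection gives
\[
c_{f_0}=\frac{\langle\phi^*_\varepsilon,f_0\rangle}{\langle\phi^*_\varepsilon,\psi_\varepsilon\rangle}
=\frac{\displaystyle\int_I\frac{f_0(x)}{\nu_\varepsilon^2+x^2}\,dx}{\displaystyle\int_I\frac{\varepsilon\gamma(x)}{(\nu_\varepsilon^2+x^2)^2}\,dx}.
\]
So the whole lemma reduces to estimating these two integrals, using Proposition \ref{proposition2} which tells us that $\nu_\varepsilon\sim \gamma(0)\pi\varepsilon$.

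The first step is to pin down the denominator. Using the substitution $x=\nu_\varepsilon z$, the denominator equals $\frac{\varepsilon}{\nu_\varepsilon^{3}}\int_{a/\nu_\varepsilon}^{b/\nu_\varepsilon}\frac{\gamma(\nu_\varepsilon z)}{(1+z^2)^2}\,dz$, and dominated convergence together with $\int_{\mathbb R}\frac{dz}{(1+z^2)^2}=\frac{\pi}{2}$ yields the asymptotics $\langle\phi^*_\varepsilon,\psi_\varepsilon\rangle\sim \frac{1}{2\gamma(0)^2\pi^2\varepsilon^2}$; in particular there exist constants $0<m\le M$ such that $m/\varepsilon^2\le \langle\phi^*_\varepsilon,\psi_\varepsilon\rangle\le M/\varepsilon^2$ for $\varepsilon$ small.

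For the lower bound $c_{f_0}\ge K_1\varepsilon^2$, since $f_0\ge 0$ and $f_0\not\equiv 0$, pick a bounded measurable set $E\subset I$ of positive measure on which $\int_E f_0>0$, and let $R=\sup_{x\in E}|x|<+\infty$. Then for $\varepsilon$ small (so that $\nu_\varepsilon\le 1$),
\[
\langle\phi^*_\varepsilon,f_0\rangle\;\ge\;\int_E\frac{f_0(x)}{1+R^2}\,dx\;=:\;C_1>0,
\]
and combining with the upper bound on the denominator gives $c_{f_0}\ge (C_1/M)\,\varepsilon^2$.

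The delicate step is the upper bound and the fact that $c_{f_0}\to 0$, because we only assume $f_0\in L^1$ and cannot use any pointwise control of $f_0$ near $0$. The trick I would use is a splitting at scale $\delta>0$:
\[
\langle\phi^*_\varepsilon,f_0\rangle
\;\le\; \frac{1}{\nu_\varepsilon^2}\int_{|x|<\delta}f_0(x)\,dx \;+\; \frac{1}{\delta^2}\|f_0\|_{L^1(I)}.
\]
Dividing by $\langle\phi^*_\varepsilon,\psi_\varepsilon\rangle\sim 1/(2\gamma(0)^2\pi^2\varepsilon^2)$ and using $\varepsilon^2/\nu_\varepsilon^2\to 1/(\gamma(0)^2\pi^2)$ gives $\limsup_{\varepsilon\to 0}c_{f_0}\le 2\int_{|x|<\delta}f_0(x)\,dx$ for every $\delta>0$. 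Letting $\delta\to 0$ and invoking absolute continuity of the integral yields $\lim_{\varepsilon\to 0}c_{f_0}=0$, and choosing any fixed $\delta_0>0$ for which the right-hand side is finite provides the $\varepsilon$-independent upper bound $K_2$. The only real obstacle is managing that we have no smoothness on $f_0$ at the concentration point $x=0$; the $\delta$-splitting circumvents this cleanly.
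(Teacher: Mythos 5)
Your proof is correct, and it follows the same overall strategy as the paper: exploit the explicit spectral projection $c_{f_0}=\langle\psi_\varepsilon^*,f_0\rangle/\langle\psi_\varepsilon^*,\psi_\varepsilon\rangle$ and estimate numerator and denominator separately. The individual estimates, however, are handled differently. For the denominator you obtain the exact asymptotic $\langle\psi_\varepsilon^*,\psi_\varepsilon\rangle\sim 1/(2\gamma(0)^2\pi^2\varepsilon^2)$ by the substitution $x=\nu_\varepsilon z$ and dominated convergence, whereas the paper is content with two one-sided crude bounds $K_3/\varepsilon^2\le\cdot\le K_0/\varepsilon^2$, obtained directly by plugging in the inequalities $\tfrac12(\gamma(0)\pi\varepsilon)^2\le\nu_\varepsilon^2\le 2(\gamma(0)\pi\varepsilon)^2$ from Proposition~\ref{proposition2}. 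For the lower bound on the numerator the paper selects a subinterval $J\subset I$ avoiding $0$, where the limit $\int_J f_0/x^2$ is finite; your use of an arbitrary bounded set $E$ together with $\nu_\varepsilon^2+x^2\le 1+R^2$ is a slightly cleaner variant that does not need to keep away from the origin. The most genuine difference is in proving $c_{f_0}\to 0$ (and the uniform upper bound $K_2$): the paper applies dominated convergence directly to $\int_I\frac{\varepsilon^2 f_0(x)}{\nu_\varepsilon^2+x^2}\,dx$, noting the integrand is dominated by $\tfrac{2}{(\gamma(0)\pi)^2}f_0(x)$ and tends to $0$ pointwise for $x\ne 0$; you instead split at a fixed scale $\delta$ and invoke absolute continuity of the Lebesgue integral in the limit $\delta\to 0$. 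Both arguments are standard for $L^1$ data with no pointwise control near $0$; the paper's DCT route is marginally shorter, while your $\delta$-splitting makes the mechanism (mass of $f_0$ near the concentration point controls the limsup) more transparent and gives an explicit $\limsup$ bound.

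Two small points worth tightening in a final write-up. First, your lower bound should explicitly require $\varepsilon$ small enough that $\nu_\varepsilon\le 1$ (you do note this); this is guaranteed by Proposition~\ref{proposition2}. Second, the uniform constant $K_2$ needs a one-line justification from the splitting: fixing $\delta_0>0$ gives $c_{f_0}\le \tfrac{\varepsilon^2}{m}\bigl(\nu_\varepsilon^{-2}\int_{|x|<\delta_0}f_0 + \delta_0^{-2}\|f_0\|_{L^1}\bigr)$, and since $\varepsilon^2/\nu_\varepsilon^2$ is bounded (again by Proposition~\ref{proposition2}) and $\varepsilon^2\le 1$, this is bounded uniformly in $\varepsilon$ small. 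Once these details are filled in, the argument is complete.
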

\begin{proof}
Recall that $c_{f_0}= \langle \psi_{\varepsilon}^{*},f_0 \rangle$
where $\psi_{\varepsilon}^{*}$ is the eigenvector of the adjoint
operator $A_{\varepsilon}^{*}$ corresponding to the eigenvalue
$\lambda_{\varepsilon}$, normalized such that $\langle
\psi_{\varepsilon}^{*},\psi_{\varepsilon} \rangle=1$. Since
$$\psi_{\varepsilon}^{*}= \frac{\gr{\left(\varepsilon
\int_{I}\frac{\gamma(x)}{(\lambda_{\varepsilon}-(1-\varepsilon-x^2))^2}\,dx\right)^{-1}}}{\lambda_{\varepsilon}-(1-\varepsilon-x^2)},$$
we see that
$$
c_{f_0}=
\frac{\int_{I}\frac{f_0(x)}{\lambda_{\varepsilon}-(1-\varepsilon-x^2)}\,dx}{\varepsilon
\int_{I}\frac{\gamma(x)}{(\lambda_{\varepsilon}-(1-\varepsilon-x^2))^2}\,dx}.
$$
Let us start by bounding the denominator from above. Using that, by Proposition \ref{proposition2}, for
$\varepsilon$ small enough, $\lambda_{\varepsilon}-(1-\varepsilon)
\geq \frac{(\gamma(0)\pi\varepsilon)^2}{2},$ we obtain the bound
\begin{equation}
\label{ine1}
\begin{array}{rcl}
\varepsilon
\int_{I}\frac{\gamma(x)}{\left(\lambda_{\varepsilon}-(1-\varepsilon-x^2)\right)^2}\,dx
& \leq & \varepsilon \sup_{x}\gamma(x)
\,\int_{\mathbb{R}}\frac{1}{\left(\frac{(\gamma(0)\pi\varepsilon)^2}{2}+x^2\right)^2}\,dx
\\ \\ & = &\sup_{x}\gamma(x)\, \frac{\sqrt{2}}{\gamma(0)^3(\pi
\varepsilon)^2}=:\frac{K_0}{\varepsilon^2}.
\end{array}
\end{equation}
Similarly, since for $\varepsilon$ small enough,
$\lambda_{\varepsilon}-(1-\varepsilon) \leq
2(\gamma(0)\pi\varepsilon)^2$, so that
\begin{equation}
\label{ine2} \varepsilon
\int_{I}\frac{\gamma(x)}{\left(\lambda_{\varepsilon}-(1-\varepsilon-x^2)\right)^2}\,dx
\geq \varepsilon \min_{[-a,a]}\gamma(x)
\int_{-a}^{a}\frac{\,dx}{\left(2(\gamma(0)\pi\varepsilon)^2+x^2\right)^2}
\geq \frac{K_3}{\varepsilon^2} .
\end{equation}
For the numerator we have, on the one hand,
\begin{equation}
\label{ine3} \varepsilon^{2}
\int_{I}\frac{f_0(x)}{\lambda_{\varepsilon}-(1-\varepsilon-x^2)}\,dx
\leq
\int_{I}\frac{\varepsilon^2}{\frac{(\gamma(0)\pi\varepsilon)^2}{2}+x^2}f_0(x)\,dx,
\end{equation}
where the right hand side tends to $0$ when $\varepsilon$ goes to
$0$ by an easy application of the Lebesgue dominated convergence
theorem (note that the integrand is bounded above by
$\frac{2}{(\gamma(0)\pi)^2}f_0(x)$).

On the other hand, notice that there exists an interval $J \subset I
$ which does not contain $0$ such that $\int_{J}f_0(x)\,dx>0$.
Then, since
$$
\int_{I}\frac{f_0(x)}{\lambda_{\varepsilon}-\left(1-\varepsilon-x^2\right)}\,dx
\geq
\int_{J}\frac{f_0(x)}{\lambda_{\varepsilon}-\left(1-\varepsilon-x^2\right)}\,dx
$$
and
$$
\lim_{\varepsilon \to 0}
\int_{J}\frac{f_0(x)}{\lambda_{\varepsilon}-\left(1-\varepsilon-x^2\right)}\,dx=\int_{J}\frac{f_0(x)}{x^2}\,dx
>0,
$$
there exists a constant $K_4>0$ such that
\begin{equation}
\label{ine4}
\int_{I}\frac{f_0(x)}{\lambda_{\varepsilon}-\left(1-\varepsilon-x^2\right)}\,dx
> K_4.
\end{equation}
By \eqref{ine2} and\eqref{ine3},
$$
c_{f_0}=\frac{\varepsilon^{2}
\int_{I}\frac{f_0(x)}{\lambda_{\varepsilon}-(1-\varepsilon-x^2)}\,dx}{\varepsilon^{3}
\int_{I}\frac{\gamma(x)}{\lambda_{\varepsilon}-(1-\varepsilon-x^2)}\,dx}
\leq \frac{\varepsilon^{2}
\int_{I}\frac{f_0(x)}{\lambda_{\varepsilon}-(1-\varepsilon-x^2)}\,dx}{K_3}
\stackrel{\scriptscriptstyle \varepsilon \rightarrow
0}{\displaystyle \longrightarrow}0
$$
and by \eqref{ine1} and \eqref{ine4}, and $\varepsilon$ small
enough,
$$
c_{f_0} \geq \frac{K_4}{\frac{K_0}{\varepsilon^2}}=:K_1
\varepsilon^2.
$$
This completes the proof.
\end{proof}
\begin{remark}
If $f_0(x)$ is bounded below by a positive number $c$ in a
neighbourhood $(-\delta, \delta)$ of $0$, then the lower estimate
can be improved using that
$$
\int_{-\delta}^{\delta}\frac{\varepsilon}{k^2
\varepsilon^2+x^2}\,dx = \frac{2}{k} \arctan\left(\frac{\delta}{k
\varepsilon}\right) \stackrel{\scriptscriptstyle \varepsilon \rightarrow
0^{+} }{\displaystyle \longrightarrow}\frac{\pi}{k}.
$$
Indeed, for $\varepsilon$ small enough
$$
\begin{array}{rcl}
\varepsilon
\int_{I}\frac{f_0(x)}{\lambda_{\varepsilon}-(1-\varepsilon-x^2)}\,dx
& \geq & \varepsilon \int_{I}\frac{f_0(x)}{2 (\gamma(0) \pi
\varepsilon)^2+x^2}\,dx \\ \\ &  \geq  &c
\int_{-\delta}^{\delta}\frac{\varepsilon}{(\sqrt{2} \gamma(0) \pi)^2
\varepsilon^2+x²}\,dx \stackrel{\scriptscriptstyle \varepsilon
\rightarrow 0^{+}}{\displaystyle \longrightarrow}\frac{c}{\sqrt{2}
\gamma(0)}.
\end{array}
$$
So in this case, for $\varepsilon$  small enough,
$$
c_{f_0} \geq
\frac{\frac{c}{\sqrt{2}\gamma(0)\varepsilon}}{\frac{K_0}{\varepsilon^2}}=:K\varepsilon
$$
for some constant $K$ independent of $\varepsilon$.
\end{remark}

The next two lemmas enable to estimate $\varphi(t)$ (defined above Lemma \ref{lemma4}).
In the first one, the dependence w.r.t. $\var$ is not explicit.

\begin{lemma}
\label{lemma1} For $\varepsilon$ small enough and any
$\rho_{\varepsilon} < (\gamma(0)\pi\varepsilon)^2$ there exists
$K_{\varepsilon}>0$ such that $| \varphi(t)| \leq \|v(t,\cdot)\|
\leq K_{\varepsilon}\,e^{-\rho_{\varepsilon}t}\, \|f_{0}\|.$
\end{lemma}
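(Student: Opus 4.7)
The overall plan is to identify $v(t,\cdot)$ as the image of $f_0$ under the restriction of $\tilde T_\varepsilon(t)$ to the $\tilde T_\varepsilon$-invariant complement of $\ker\tilde A_\varepsilon$, and then to control the growth of that restricted semigroup via the spectral gap provided by Proposition~\ref{proposition2}. The trivial inequality $|\varphi(t)|\le\|v(t,\cdot)\|$ follows immediately from $\varphi(t)=\int_I v(t,x)\,dx$ and the definition of the $L^1$ norm, so the real content is the exponential decay of $\|v(t,\cdot)\|$.

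First, I would let $P$ denote the rank-one spectral projection associated with the simple pole $0$ of the resolvent of $\tilde A_\varepsilon$; its existence and boundedness are guaranteed by Proposition~\ref{proposition1}, which states that $0$ is an algebraically simple pole with eigenvector $\psi_\varepsilon$. Then $Pf_0=c_{f_0}\psi_\varepsilon$ and, setting $V:=\mathrm{Range}(I-P)$, the subspace $V$ is closed and $\tilde T_\varepsilon$-invariant, so $v(t,\cdot)=\tilde T_\varepsilon(t)(I-P)f_0$. This reduces the problem to estimating $\|\tilde T_\varepsilon(t)|_V\|$, since $\|(I-P)f_0\|\le(1+\|P\|)\|f_0\|$.

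Next, I would argue that the growth bound of $\tilde T_\varepsilon(t)|_V$ is at most $1-\varepsilon-\lambda_\varepsilon$. By Lemma~\ref{lemma2.0}(a), the essential growth bound of $\tilde T_\varepsilon$ equals $1-\varepsilon-\lambda_\varepsilon$; since $P$ has finite rank, the essential growth bound is unchanged upon restriction to $V$. By Proposition~\ref{proposition1}, the only eigenvalue of $\tilde A_\varepsilon$ is $0$, which is entirely captured by $P$, hence $\tilde A_\varepsilon|_V$ has no eigenvalue at all. Applying the standard $C^0$-semigroup identity
\[\omega_0(S)=\max\Bigl(\omega_{ess}(S),\ \sup\{\Re\mu:\mu\in\sigma_p(\text{gen.}\,S),\ \Re\mu>\omega_{ess}(S)\}\Bigr)\]
then yields $\omega_0(\tilde T_\varepsilon|_V)\le 1-\varepsilon-\lambda_\varepsilon$. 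To finish, I would use Proposition~\ref{proposition2}, namely $\lambda_\varepsilon-(1-\varepsilon)=(\gamma(0)\pi\varepsilon)^2+O(\varepsilon^3\ln(1/\varepsilon))$, to conclude that for $\varepsilon$ small enough and any $\rho_\varepsilon<(\gamma(0)\pi\varepsilon)^2$ one has $-\rho_\varepsilon>1-\varepsilon-\lambda_\varepsilon\ge\omega_0(\tilde T_\varepsilon|_V)$. The definition of the growth bound then supplies $M_\varepsilon>0$ with $\|\tilde T_\varepsilon(t)|_V\|\le M_\varepsilon e^{-\rho_\varepsilon t}$, and choosing $K_\varepsilon:=(1+\|P\|)M_\varepsilon$ completes the argument.

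The main obstacle is the spectral-theoretic step linking the spectrum of $\tilde A_\varepsilon|_V$ to the actual growth of $\tilde T_\varepsilon(t)|_V$: positivity is generally lost on $V$, so Lemma~\ref{lemma2.0}(b) cannot be invoked directly, and one must rely on a general theorem about $C^0$-semigroups relating $\omega_0$, $\omega_{ess}$ and isolated eigenvalues of finite algebraic multiplicity (as in Engel--Nagel). Should this cause a technical hiccup, a fallback is to represent $\tilde T_\varepsilon(t)(I-P)$ through an inverse-Laplace contour integral on a vertical line in $\{\Re z=-\rho_\varepsilon\}$ and to use the explicit resolvent formula \eqref{eq5} to obtain the required bound directly.
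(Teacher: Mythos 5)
Correct, and essentially the same approach as the paper: the paper applies Theorem~9.11 of Cl\'ement et al.\ to the spectral gap $\omega_{ess}(\tilde T_\varepsilon)=1-\varepsilon-\lambda_\varepsilon<0=\omega_0(\tilde T_\varepsilon)$ coming from Lemma~\ref{lemma2.0} and Proposition~\ref{proposition2}, whereas you unpack the content of that cited theorem explicitly via the rank-one spectral projection, the stability of the essential growth bound under finite-rank modification, and the point-spectrum spectral mapping theorem. (Your fallback contour-integral argument is precisely the mechanism the paper uses for the quantitative Lemma~\ref{lem6}.)
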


\begin{proof}
Since by Lemma \ref{lemma2.0}
$\omega_{ess}(\tilde{A}_{\varepsilon})<\omega_{0}(\tilde{A}_{\varepsilon})$,
 we can apply Theorem $9.11$ in \cite{clement}, and get the estimate
$$
\|
v(t,\cdot)\|=\|\tilde{T}_{\varepsilon}(t)f_{0}-c_{f_{0}}\psi_{\varepsilon}\|\leq
K_{\varepsilon}e^{-\eta t}\|f_{0}\| \quad \forall \eta <
\lambda_{\varepsilon}-(1-\varepsilon).
$$
Proposition \ref{proposition2} gives then the statement.
\end{proof}

We now give an estimate of the dependence of $K_{\varepsilon}$ on
$\varepsilon$, provided that $\rho_{\varepsilon}$ is chosen far
enough from its limit value. More precisely, we choose
$\rho_{\varepsilon}=\frac{\lambda_{\varepsilon}-(1-\varepsilon)}{2}=:\frac{\alpha_{\varepsilon}}{2}.$

\begin{lemma}
\label{lem6} For $\varepsilon$ small enough there exists a constant
$K$ independent of $\varepsilon$ and of $f_0$ such that
$$
\left\|\tilde{T}_{\varepsilon}(t)f_{0}-c_{f_{0}}\,\psi_{\varepsilon}\right\|\leq
K \,\varepsilon^{-4} \,e^{\frac{-\alpha_{\varepsilon}}{2}\, t}\,
\|f_0\| .
$$
\end{lemma}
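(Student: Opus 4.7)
The plan is to reduce the $L^1$ bound to the analysis of a scalar Volterra integral equation. Setting $h := f_0 - c_{f_0}\psi_\varepsilon$, one has $v(t,\cdot) = \tilde T_\varepsilon(t) h$, and the formula $\psi_\varepsilon^*(x) = C_\varepsilon/(\alpha_\varepsilon + x^2)$ for the adjoint eigenvector together with the identity $c_{f_0} = \langle \psi_\varepsilon^*,f_0\rangle$ yield the orthogonality
\[
\int_I \frac{h(x)}{\alpha_\varepsilon + x^2}\,dx = 0.
\]
Applying Duhamel's formula to the splitting $\tilde A_\varepsilon = (1-\varepsilon-x^2-\lambda_\varepsilon)\,\cdot\, + \varepsilon\gamma(x)\int_I\cdot\,dy$ decouples the $x$-dependence and gives
\[
v(t,x) = e^{-(\alpha_\varepsilon+x^2)t}\, h(x) + \varepsilon\,\gamma(x)\int_0^t e^{-(\alpha_\varepsilon+x^2)(t-s)}\,\varphi(s)\,ds,
\]
so that integrating in $x$ produces the convolution-type Volterra equation $\varphi = g_0 + K\ast\varphi$, where
\[
g_0(t) := \int_I e^{-(\alpha_\varepsilon+x^2)t} h(x)\,dx,\qquad K(t) := \varepsilon\int_I \gamma(x)\, e^{-(\alpha_\varepsilon+x^2)t}\,dx.
\]

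The central observation is that in Laplace variables the characteristic equation defining $\lambda_\varepsilon$ gives $\hat K(0) = 1$, while the orthogonality above gives $\hat g_0(0) = 0$. Hence the formal solution $\hat\varphi(\mu) = \hat g_0(\mu)/(1-\hat K(\mu))$ has a removable singularity at $\mu = 0$, and by Proposition~\ref{proposition1} this is the only potential singularity of $\hat\varphi$ in the half-plane $\{\mathrm{Re}\,\mu > -\alpha_\varepsilon\}$. Inverting the Laplace transform along the shifted Bromwich contour $\{\mathrm{Re}\,\mu = -\alpha_\varepsilon/2\}$ — which sits midway between the removed pole and the essential spectrum — then produces
\[
|\varphi(t)| \leq \frac{e^{-\alpha_\varepsilon t/2}}{2\pi}\int_{\mathbb R}\bigg|\frac{\hat g_0(-\alpha_\varepsilon/2 + i\tau)}{1-\hat K(-\alpha_\varepsilon/2+i\tau)}\bigg|\,d\tau,
\]
and plugging this estimate back into the Duhamel representation above yields the claimed $L^1(I)$ bound on $v(t,\cdot)$.

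The explicit $\varepsilon^{-4}$ prefactor is then extracted by tracking the $\varepsilon$-dependence of each ingredient. Using $\alpha_\varepsilon \simeq (\gamma(0)\pi\varepsilon)^2$ from Proposition~\ref{proposition2}, the rescaling $x = \sqrt{\alpha_\varepsilon/2}\,z$ turns $\hat K(-\alpha_\varepsilon/2 + i\tau)$ into a Poisson-type integral against $\gamma$, which can be compared quantitatively with its value $1$ at $\tau = 0$. The main technical obstacle is to obtain a quantitative lower bound on $|1-\hat K(-\alpha_\varepsilon/2 + i\tau)|$ which is uniform in $\tau$, degrades only by a controlled negative power of $\varepsilon$, and remains integrable in $\tau$. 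Combined with size estimates on $\hat g_0$ (in which $\|h\|_{L^1} \leq (1+K_2)\|f_0\|$ via Lemma~\ref{lemma4}) and with the factor $\varepsilon$ multiplying $\gamma(x)$ in the Duhamel representation, these ingredients assemble into the announced $K\,\varepsilon^{-4}$ constant, independent of $\varepsilon$ and of $f_0$.
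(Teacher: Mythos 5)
Your overall strategy is the same as the paper's: Laplace transform in $t$, shift the Bromwich contour to $\mathrm{Re}\,\mu=-\alpha_\varepsilon/2$, and bound the resulting integral. Pre-subtracting $c_{f_0}\psi_\varepsilon$ so that the singularity at $\mu=0$ becomes removable (rather than picking up a residue, as the paper does) is a purely cosmetic rearrangement of the same computation, and your algebraic identities $\hat K(0)=1$ and $\hat g_0(0)=0$ are both correct.

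However, there is a genuine gap at the step where you write
\[
|\varphi(t)|\;\leq\;\frac{e^{-\alpha_\varepsilon t/2}}{2\pi}\int_{\mathbb R}\left|\frac{\hat g_0\bigl(-\tfrac{\alpha_\varepsilon}{2}+i\tau\bigr)}{1-\hat K\bigl(-\tfrac{\alpha_\varepsilon}{2}+i\tau\bigr)}\right|\,d\tau .
\]
This integral diverges. Since $h=f_0-c_{f_0}\psi_\varepsilon$ is only in $L^1(I)$ and in general $\int_I h\neq 0$, the numerator satisfies $|\hat g_0(-\alpha_\varepsilon/2+i\tau)|\sim\|h\|_{L^1}/|\tau|$ as $|\tau|\to\infty$, while the denominator satisfies $|1-\hat K(-\alpha_\varepsilon/2+i\tau)|\to 1$. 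So the integrand decays only like $1/|\tau|$, which is not integrable on $\mathbb R$. (Your stated concern that the lower bound on $|1-\hat K|$ should ``remain integrable in $\tau$'' does not address this: even the sharp bound $|1-\hat K|\geq 1/2$ for large $|\tau|$ leaves a $1/|\tau|$ integrand.) Consequently the proposed pointwise bound on $\varphi(t)$ cannot be obtained by naively taking absolute values in the Bromwich formula.

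The paper avoids this by inverting the Laplace transform for the full $u(t,x)$ rather than for $\varphi(t)=\int_I v(t,\cdot)\,dx$ alone. In the Laplace transform of $u(t,x)$ there is an additional $x$-dependent factor $\bigl(\mu+\lambda_\varepsilon-a_\varepsilon(x)\bigr)^{-1}$, and on the contour $\mathrm{Re}\,\mu=-\alpha_\varepsilon/2$ this contributes a further $1/|\alpha_\varepsilon/2+i\tau|$. Combined with the bound $H(\tau)\geq C/(1+\tau)$ on the denominator, the integrand in $\tau$ then decays like $(1+\tau)/|\alpha_\varepsilon/2+i\tau|^3\sim 1/\tau^2$, which is absolutely integrable and produces the $8/\alpha_\varepsilon^2+4/\alpha_\varepsilon\sim\varepsilon^{-4}$ constant. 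So the fix to your plan is not to extract a pointwise estimate on $\varphi$ and then plug into Duhamel, but to keep the $x$-dependence through the inversion and estimate $u(t,x)-c_{u_0}\psi_\varepsilon(x)$ directly, so that the extra $\tau$-decay is available before taking absolute values.
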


\begin{proof}
Since the proof of this result is quite technical we delay it to the end of this section (subsection \ref{app}).
\end{proof}

We now rewrite equation (\ref{eqq0}) as
\begin{equation}
\label{eq6}
 \frac{\partial f(t,x)}{\partial t}= \tilde{A}_{\varepsilon}f(t,x)+ \bigg(\lambda_{\varepsilon}-\int_{I}f(t,y)\,dy \bigg)\, f(t,x) .
\end{equation}
We look for solutions of (\ref{eq6}) (with positive initial
condition $f_{0} \in L^1(I)$) which can be written as
$f(t,x)=h(t)(\tilde{T}_{\varepsilon}(t)f_{0})(x)$, with $h := h(t)$
a function of time such that $h(0)=1$. Substituting in (\ref{eq6}),
it follows that $f$ is indeed a solution of eq. (\ref{eqq0}) if
$h(t)$ satisfies the following initial value problem for an ordinary
differential equation:
\begin{equation}
 \label{eq7}
h'(t)=\Big(\lambda_{\varepsilon}-h(t)\int_{I}\left(\tilde{T}_{\varepsilon}(t)f_{0}\right)(x)\,dx\Big)h(t),
\qquad h(0)=1,
\end{equation}
or equivalently
\begin{equation}
\label{eq8}
h'(t)=\Big(\lambda_{\varepsilon}-(c_{f_{0}}+\varphi(t))\,
h(t)\Big)\, h(t), \qquad h(0)=1.
\end{equation}

The two next lemmas explain  the asymptotic behavior of $h(t)$. In
the first one, the dependence w.r.t. $\var$ of the constants is not
explicit.

\begin{lemma}
\label{lemma3} For $\varepsilon>0$ small enough and any
$\rho_{\varepsilon} < (\gamma(0)\pi\varepsilon)^2$, there exists a
positive constant $\hat{C}_{\varepsilon}>0$ such that
$\left|h(t)-\frac{\lambda_{\varepsilon}}{c_{f_{0}}}\right| \leq
\hat{C}_{\varepsilon}\, e^{-\rho_{\varepsilon}\, t}.$
\end{lemma}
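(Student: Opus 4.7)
The plan is to reduce the nonlinear ODE \eqref{eq8} to a linear ODE by passing to the reciprocal variable, solve it explicitly, and then use the decay estimate of Lemma \ref{lemma1} on $\varphi$ to get the stated decay on $h$.

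First, since $f_0 \ge 0$ is not identically zero, by positivity of the semigroup and by the form \eqref{eq7} we have $h(t) > 0$ for every $t \ge 0$, so we may set $g(t) := 1/h(t)$. A direct differentiation using \eqref{eq8} yields the linear first-order equation
\begin{equation*}
g'(t) + \lambda_{\varepsilon}\, g(t) = c_{f_0} + \varphi(t), \qquad g(0)=1,
\end{equation*}
which is solved explicitly by
\begin{equation*}
g(t) = e^{-\lambda_{\varepsilon} t} + \frac{c_{f_0}}{\lambda_{\varepsilon}}\bigl(1 - e^{-\lambda_{\varepsilon} t}\bigr) + \int_0^t e^{-\lambda_{\varepsilon}(t-s)}\, \varphi(s)\, ds.
\end{equation*}

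Second, I bound the convolution term with Lemma \ref{lemma1}. Since $\rho_{\varepsilon} < (\gamma(0)\pi\varepsilon)^2$ and Proposition \ref{proposition2} gives $\lambda_{\varepsilon} - (1-\varepsilon) \sim (\gamma(0)\pi\varepsilon)^2$, for $\varepsilon$ small we have $\lambda_{\varepsilon} > \rho_{\varepsilon}$, so
\begin{equation*}
\left|\int_0^t e^{-\lambda_{\varepsilon}(t-s)}\, \varphi(s)\, ds\right| \le K_{\varepsilon}\|f_0\|\, e^{-\lambda_{\varepsilon} t}\int_0^t e^{(\lambda_{\varepsilon}-\rho_{\varepsilon})s}\, ds \le \frac{K_{\varepsilon}\|f_0\|}{\lambda_{\varepsilon}-\rho_{\varepsilon}}\, e^{-\rho_{\varepsilon} t}.
\end{equation*}
Combined with the trivial estimate $e^{-\lambda_{\varepsilon} t}\le e^{-\rho_{\varepsilon} t}$ on the remaining terms, this yields
\begin{equation*}
\left| g(t) - \frac{c_{f_0}}{\lambda_{\varepsilon}} \right| \le C_{\varepsilon}\, e^{-\rho_{\varepsilon} t}
\end{equation*}
for some $C_{\varepsilon} > 0$.

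Third, I pass back to $h = 1/g$ using the identity
\begin{equation*}
h(t) - \frac{\lambda_{\varepsilon}}{c_{f_0}} = \frac{1}{g(t)} - \frac{\lambda_{\varepsilon}}{c_{f_0}} = -\frac{\lambda_{\varepsilon}}{c_{f_0}\, g(t)}\left( g(t) - \frac{c_{f_0}}{\lambda_{\varepsilon}}\right).
\end{equation*}
To close the estimate I need a uniform lower bound $g(t) \ge m_{\varepsilon} > 0$. This is easy once the previous step is established: since $g(t) \to c_{f_0}/\lambda_{\varepsilon} > 0$ with exponential rate, $g(t) \ge c_{f_0}/(2\lambda_{\varepsilon})$ for $t \ge t_{\varepsilon}$ large enough; on the compact interval $[0, t_{\varepsilon}]$, $g$ is continuous and strictly positive (because $h$ is finite and positive), hence bounded below. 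Dividing by $c_{f_0}\, g(t)$ then gives $|h(t)-\lambda_{\varepsilon}/c_{f_0}|\le \hat C_{\varepsilon}\, e^{-\rho_{\varepsilon} t}$.

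The only mildly delicate point is verifying that $g$ stays away from zero, but this follows essentially for free from positivity of $h$ together with the exponential convergence just proved. The rest is a completely standard linear-ODE computation; the whole argument is essentially the observation that the $h$-equation is logistic-type with a decaying inhomogeneity, whose stable equilibrium $\lambda_{\varepsilon}/c_{f_0}$ is attracting at the exponential rate dictated by the perturbation $\varphi$.
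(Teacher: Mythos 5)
Your proof is correct and follows essentially the same route as the paper: the reciprocal substitution $g=1/h$ simply re-derives the explicit formula for $h(t)$ that the paper writes down directly, your $g(t)$ is exactly the paper's denominator, the convolution is bounded with Lemma \ref{lemma1} in the same way, and the lower bound on $g$ plays the same role as the paper's observation that the denominator is continuous, equals $1$ at $t=0$, and tends to $c_{f_0}/\lambda_\varepsilon>0$. One small remark: rather than appealing to positivity of $h$, the cleanest way to see $g(t)>0$ is to note that $c_{f_0}+\varphi(s)=\int_I (\tilde T_\varepsilon(s)f_0)(x)\,dx\ge 0$ by positivity of the semigroup, so from the explicit formula $g(t)\ge e^{-\lambda_\varepsilon t}>0$ for all $t$.
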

\begin{proof}
The solution of \eqref{eq8} is explicitly given by
$$
h(t)=\frac{e^{\lambda_{\varepsilon}t}}{1+\int_{0}^{t}(c_{f_{0}}+\varphi(s))\,
e^{\lambda_{\varepsilon}s}\,
\,ds}=\frac{1}{e^{-\lambda_{\varepsilon}t}+\frac{c_{f_0}}{\lambda_{\varepsilon}}\,
(1-e^{-\lambda_{\varepsilon}t})+e^{-\lambda_{\varepsilon}t}\int_{0}^{t}
\varphi(s)\,e^{\lambda_{\varepsilon}s}\,{d}s}.
$$
Then
$$
\begin{array}{rcl}
\left|h(t)-\frac{\lambda_{\varepsilon}}{c_{f_0}}\right|&=&\left|\frac{1}{e^{-\lambda_{\varepsilon}t}+\frac{c_{f_0}}{\lambda_{\varepsilon}}(1-e^{-\lambda_{\varepsilon}t})+e^{-\lambda_{\varepsilon}t}\int_{0}^{t}
\varphi(s)e^{\lambda_{\varepsilon}s}\,{d}s}-\frac{1}{\frac{c_{f_0}}{\lambda_{\varepsilon}}}\right|\\ \\
&=&
\frac{\frac{\lambda_{\varepsilon}}{c_{f_0}}\left|e^{-\lambda_{\varepsilon}t}\big(1-\frac{c_{f_0}}{\lambda_{\varepsilon}}\big)+e^{-\lambda_{\varepsilon}t}\int_{0}^{t}
\varphi(s)e^{\lambda_{\varepsilon}s}\,ds\right|}{e^{-\lambda_{\varepsilon}t}+e^{-\lambda_{\varepsilon}t}\int_{0}^{t}(c_{f_{0}}+\varphi(s))e^{\lambda_{\varepsilon}s}\,{d}s}\\ \\
& \leq &\hat{C}_{\varepsilon}e^{-\rho_{\varepsilon}t},
\end{array}$$
where for the last inequality we have used that the denominator is a
positive continuous function bounded below (it takes the value $1$
for $t=0$ and its limit is $\frac{c_{f_0}}{\lambda_{\varepsilon}}$
when $t$ goes to infinity). We also used the following estimate for
the numerator: since, by Lemma \ref{lemma1}, $|\varphi(s)| \leq
K_{\varepsilon}\,e^{-\rho_{\varepsilon}s}\|f_0\|$, then
$$
\begin{array}{rcl}
\left|e^{-\lambda_{\varepsilon}t}\big(1-\frac{c_{f_0}}{\lambda_{\varepsilon}}\big)+e^{-\lambda_{\varepsilon}t}\int_{0}^{t}
\varphi(s)\,e^{\lambda_{\varepsilon}s}\,ds\right|&\leq&
e^{-\lambda_{\varepsilon}t}\left(\left|1-\frac{c_{f_0}}{\lambda_{\varepsilon}}\right|-\frac{K_{\varepsilon}\,\|f_0\|}{\lambda_{\varepsilon}-\rho_{\varepsilon}}\right)+\frac{K_{\varepsilon}
\,\|f_0\|}{\lambda_{\varepsilon}-\rho_{\varepsilon}}\, e^{-\rho_{\varepsilon}t}\\
\\ & \leq & 2K_{\varepsilon}\, e^{-\rho_{\varepsilon}t}\|f_0\| .
\end{array}
$$

\end{proof}
In order to give an estimate of the dependence of
$\hat{C}_{\varepsilon}$ w.r.t. $\varepsilon,$ we need to bound the
denominator more precisely and to take a value of
$\rho_{\varepsilon}$ separated of its limit value. As in Lemma
\ref{lem6}, we choose
$\rho_{\varepsilon}=\frac{\lambda_{\varepsilon}-(1-\varepsilon)}{2}=:\frac{\alpha_{\varepsilon}}{2}.$

\begin{lemma}
\label{lem5} For $\varepsilon>0$ small enough, there exist
constants $K_{7}$ and $K_{8}$ (independent of $\varepsilon$) such that
$$
\left|h(t)-\frac{\lambda_{\varepsilon}}{c_{f_0}}\right| \leq K_8\,
\varepsilon^{\frac{-K_7}{\varepsilon^2}}\,
e^{-\frac{\alpha_{\varepsilon} t}{2}}.
$$
\end{lemma}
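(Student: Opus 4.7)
The plan is to refine the proof of Lemma~\ref{lemma3} by making every constant explicit in $\varepsilon$, starting from the same explicit formula $h(t)=1/u(t)$ with
\[ u(t) = e^{-\lambda_{\varepsilon} t} + \frac{c_{f_0}}{\lambda_{\varepsilon}}\bigl(1-e^{-\lambda_{\varepsilon} t}\bigr) + e^{-\lambda_{\varepsilon} t}\int_0^t e^{\lambda_{\varepsilon} s}\,\varphi(s)\,ds. \]
Plugging the sharp bound $|\varphi(s)|\leq K\,\varepsilon^{-4}\,e^{-\alpha_{\varepsilon} s/2}\|f_0\|$ from Lemma~\ref{lem6} into the integral, using $\lambda_{\varepsilon}-\alpha_{\varepsilon}/2\geq 1/2$ for small $\varepsilon$ together with $\lambda_{\varepsilon}\geq \alpha_{\varepsilon}/2$ and the boundedness $c_{f_0}/\lambda_{\varepsilon}\leq C$ from Lemma~\ref{lemma4} and Proposition~\ref{proposition2}, I would obtain the explicit deviation estimate
\[ \left| u(t) - \frac{c_{f_0}}{\lambda_{\varepsilon}}\right| \,\leq\, C\,\varepsilon^{-4}\,\|f_0\|\, e^{-\alpha_{\varepsilon} t/2}, \qquad t\geq 0. \]

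The core of the argument is a uniform-in-$t$ lower bound on $u(t)$ with explicit $\varepsilon$-dependence, which is what is really missing in Lemma~\ref{lemma3}. The key observation is that $c_{f_0}+\varphi(s) = \int_I (\tilde{T}_{\varepsilon}(s)f_0)(x)\,dx \geq 0$ by positivity of the semigroup $\tilde{T}_{\varepsilon}$ applied to $f_0\geq 0$, so the rewriting
\[ u(t) \,=\, e^{-\lambda_{\varepsilon} t} + \int_0^t e^{-\lambda_{\varepsilon}(t-s)}\bigl(c_{f_0}+\varphi(s)\bigr)\,ds \]
yields the elementary but crucial bound $u(t)\geq e^{-\lambda_{\varepsilon} t}$. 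I would then split the time axis at a threshold $T_{\varepsilon}$ chosen so that the right-hand side of the deviation estimate is at most $c_{f_0}/(2\lambda_{\varepsilon})$ for $t\geq T_{\varepsilon}$. Combining $c_{f_0}\geq K_1\varepsilon^2$ with $\alpha_{\varepsilon}\sim\gamma(0)^2\pi^2\varepsilon^2$, this amounts to $T_{\varepsilon} = O\bigl(\varepsilon^{-2}\log(1/\varepsilon)\bigr)$. For $t\geq T_{\varepsilon}$ one then has $u(t)\geq c_{f_0}/(2\lambda_{\varepsilon})\geq K_1\varepsilon^2/(2\lambda_{\varepsilon})$, while for $t\in[0,T_{\varepsilon}]$ the positivity bound gives $u(t)\geq e^{-\lambda_{\varepsilon} T_{\varepsilon}}\geq \varepsilon^{K_7/\varepsilon^2}$ for a suitable $K_7>0$ (independent of $\varepsilon$), the latter being the smaller of the two. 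Hence $u(t)\geq c\,\varepsilon^{K_7/\varepsilon^2}$ uniformly in $t$.

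Combining, the conclusion follows from
\[ \left|h(t)-\frac{\lambda_{\varepsilon}}{c_{f_0}}\right| \,=\, \frac{\lambda_{\varepsilon}\,|u(t)-c_{f_0}/\lambda_{\varepsilon}|}{c_{f_0}\,u(t)} \,\leq\, \frac{\lambda_{\varepsilon}\cdot C\varepsilon^{-4}\|f_0\|\,e^{-\alpha_{\varepsilon} t/2}}{K_1\varepsilon^2\cdot c\,\varepsilon^{K_7/\varepsilon^2}} \,\leq\, K_8\,\varepsilon^{-K_7/\varepsilon^2}\,e^{-\alpha_{\varepsilon} t/2}, \]
after absorbing the polynomial factor $\varepsilon^{-6}$ into $\varepsilon^{-K_7/\varepsilon^2}$ by slightly enlarging $K_7$. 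The main obstacle is really the intermediate regime $t\sim \varepsilon^{-2}\log(1/\varepsilon)$, where the asymptotic $u(t)\approx c_{f_0}/\lambda_{\varepsilon}$ has not yet taken over, and one is forced to rely on the crude positivity bound $u(t)\geq e^{-\lambda_{\varepsilon} t}$; it is precisely this crude bound that is responsible for the singular factor $\varepsilon^{-K_7/\varepsilon^2}$ in the final estimate.
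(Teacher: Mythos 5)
Your proposal is correct and follows essentially the same route as the paper's proof. The paper likewise starts from the explicit formula $h(t)=1/u(t)$, lower-bounds $u(t)$ uniformly by introducing a threshold time $t_\varepsilon$ (its condition (66) $c_{f_0}(1-e^{-\lambda_\varepsilon t_\varepsilon})-K_\varepsilon e^{-\rho_\varepsilon t_\varepsilon}\geq e^{-\lambda_\varepsilon t_\varepsilon}$ is exactly your requirement that the tail term has dropped below $c_{f_0}/2$), takes $e^{-\lambda_\varepsilon t_\varepsilon}=(c_{f_0}/(2K_\varepsilon))^{\lambda_\varepsilon/\rho_\varepsilon}$ as the uniform lower bound on the denominator, and then combines Lemma~\ref{lem6}, Lemma~\ref{lemma4} and Proposition~\ref{proposition2} to convert this into the $\varepsilon^{-K_7/\varepsilon^2}$ factor — the same splitting into small/large times and the same identification of the intermediate regime as the source of the singular constant that you describe.
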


\begin{proof}
Using Lemma \ref{lemma1} and the fact that the second term is
positive we see that
\begin{equation}
\begin{array}{rcl}
e^{-\lambda_{\varepsilon} t} + e^{-\lambda_{\varepsilon} t}
\int_0^t(c_{f_0}+\varphi(s))\,e^{\lambda_{\varepsilon} s} \, ds
&\geq& e^{-\lambda_{\varepsilon} t} + \max \left(0,
c_{f_0}(1-e^{-\lambda_{\varepsilon}
t})-K_{\varepsilon}e^{-\rho_{\varepsilon} t} \right) \\
\\ \geq
e^{-\lambda_{\varepsilon} t_{\varepsilon}}
\end{array}\label{eq666}
\end{equation}
for any $t_{\varepsilon}$ such that
\begin{equation}
\label{eq66} c_{f_0}\,(1-e^{-\lambda_{\varepsilon}
t_{\varepsilon}})-K_{\varepsilon}\, e^{-\rho_{\varepsilon}
t_{\varepsilon}} \geq e^{-\lambda_{\varepsilon} t_{\varepsilon}}.
\end{equation}
(Notice that the left hand side in \eqref{eq66} is an increasing
function of $t_{\varepsilon}$). This indeed happens if
$K_{\varepsilon}\, e^{-\rho_{\varepsilon} t_{\varepsilon}} \leq
\frac{c_{f_0}}{2}$ and $(1+c_{f_0})\,e^{-\lambda_{\varepsilon}
t_{\varepsilon}} \leq \frac{c_{f_0}}{2}.$ Since the second condition
is weaker than the first one for $\varepsilon$ small enough,
\eqref{eq66} holds whenever $t_{\varepsilon}$ is such that
$e^{-\rho_{\varepsilon} t_{\varepsilon}} \leq
\frac{c_{f_0}}{2K_{\varepsilon}}$, i.e., $e^{-\lambda_{\varepsilon}
t_{\varepsilon}} \leq \left( \frac{c_{f_0}}{2 K_{\varepsilon}}
\right)^{\frac{\lambda_{\varepsilon}}{\rho_{\varepsilon}}}$ and
$\varepsilon>0$ is sufficiently small. So, $\left( \frac{c_{f_0}}{2
K_{\varepsilon}}
\right)^{\frac{\lambda_{\varepsilon}}{\rho_{\varepsilon}}}$ is also
a lower bound in \eqref{eq666}, and we finally have
$$
\left|e^{-\lambda_{\varepsilon} t} + e^{-\lambda_{\varepsilon} t}
\int_0^t(c_{f_0}+\varphi(s))e^{-\lambda_{\varepsilon} s} ds\right| \geq
\left( \frac{c_{f_0}}{2 K_{\varepsilon}}
\right)^{\frac{\lambda_{\varepsilon}}{\rho_{\varepsilon}}}.
$$
Using the bound on the numerator given in the proof of Lemma
\ref{lemma3}, the previous estimate and using also Lemma \ref{lem6},
Lemma \ref{lemma4} and Proposition \ref{proposition2}, we obtain
\medskip

\begin{equation}\label{bound1}
\begin{array}{rcl}
\left|h(t)-\frac{\lambda_{\varepsilon}}{c_{f_0}}\right|& \leq &
\frac{2K_{\varepsilon}\, e^{-\rho_{\varepsilon}t}\|f_0\|}{\left(
\frac{c_{f_0}}{2 K_{\varepsilon}}
\right)^{\frac{\lambda_{\varepsilon}}{\rho_{\varepsilon}}}}\\ \\
& \leq &
\frac{2K_{5}\,\varepsilon^{-4}\,e^{-\frac{\alpha_{\varepsilon}t}{2}}\,\|f_0\|}{\left(\frac{K_1
\varepsilon^{2}}{2K_{5}\, \varepsilon^{-4}}\right)^{K_{6}\,
\varepsilon^{-2}}}\\
\\
& = &
2K_{5}\,\left(\frac{2K_5}{K_1}\right)^{-\frac{K_6}{\varepsilon^{2}}}\varepsilon^{-4-
\frac{6K_6}{\varepsilon^2}}\, e^{-\frac{\alpha_{\varepsilon}t}{2}}\,
\|f_0\|
\\
\\ &\leq& K_8 \,\varepsilon^{\frac{-K_7}{\varepsilon^2}}\,
e^{-\frac{\alpha_{\varepsilon}t}{2}}.
\end{array}
\end{equation}
\end{proof}

\medskip

We are now in position to conclude the proof of Theorem
\ref{theorem1}.
\medskip

We recall that $h$ satisfies the integral equation
$$h(t) = 1 +
\int_0^t
\bigg(\lambda_{\varepsilon}-h(s)\,\int_{I}\left(\tilde{T}_{\varepsilon}f_0\right)(x)\,
dx \, \bigg)\, h(s)\, ds
$$
from which the following identity  follows
$$h(t)\tilde{T}_{\varepsilon}(t)f_0 = \tilde{T}_{\varepsilon}(t)f_0 +
\int_0^t
\tilde{T}_{\varepsilon}(t-s)\bigg(\lambda_{\varepsilon}-h(s)\int_{I}\left(\tilde{T}_{\varepsilon}(s)f_0\right)(x)dx\bigg)h(s)\tilde{T}_{\varepsilon}(s)f_0ds,
$$
i.e., $f(x,t)$ is a solution of the variations of constants
equation.\\

On the other hand, the nonlinear part of the right hand side of
(\ref{eq6}) is a locally Lipschitz function of $f \in L^1(I)$. From
this uniqueness follows, whereas global existence is clear from the
previous lemmas.
\medskip

Finally, a standard application of the triangular inequality and
Lemmas \ref{lemma4}, \ref{lemma1} and \ref{lemma3} gives

\begin{equation}
\label{triangle}
\begin{array}{rcl}\left\|f(.,t)-\lambda_{\varepsilon}\psi_{\varepsilon}(x) \right\| &\leq &
\left|h(t)-\frac{\lambda_{\varepsilon}}{c_{f_0}}\right|\;\left\|\tilde{T}_{\varepsilon}(t)f_{0}\right\|
+\frac{\lambda_{\varepsilon}}{c_{f_0}}\left\|\tilde{T}_{\varepsilon}(t)f_{0}-c_{f_0}\,\psi_{\varepsilon}(x)\right\| \\ \\
& \leq & \hat{C}_{\varepsilon}\,e^{-\rho_{\varepsilon}t} \left(K_{2}+K_{\varepsilon}\,e^{-\rho_{\varepsilon}t}\, ||f_0||\right)+\frac{1}{K_1 \varepsilon^2}K_{\varepsilon}e^{-\rho_{\varepsilon}t} \\ \\
 & \leq & C_{\varepsilon} \,e^{-\rho_{\varepsilon}t}.
\end{array}
\end{equation}
Using Lemmas \ref{lem6} and \ref{lem5} in the second inequality of
\eqref{triangle}, the last statement of Theorem \ref{theorem1} follows.

\subsection{Proof of Corollary \ref{cor1}}
By the triangular inequality,
$$
\begin{array}{rcl}
\left\|f(t,\cdot) - \frac{\varepsilon \gamma(0)}{(\gamma(0)\pi\varepsilon)^2+\cdot^2}\right\|_{L^{1}(I)} &\leq& \left\|f(t,\cdot)-\lambda_{\varepsilon}\psi_{\varepsilon}\right\|_{L^{1}(I)}\\ \\ & +& \left\|\lambda_{\varepsilon}\psi_{\varepsilon}(x)-\frac{\varepsilon \gamma(0)}{(\gamma(0)\pi\varepsilon)^2+\cdot^2}\right\|_{L^{1}(I)}
\end{array} .
$$
Hence by Proposition \ref{proposition2} and Theorem \ref{theorem1}, we only need to estimate the last term, for which we have
\begin{align*}
&\left\|\frac{\lambda_{\varepsilon}\varepsilon \gamma}{\lambda_{\varepsilon}-(1-\varepsilon)+\cdot^2}- \frac{\varepsilon \gamma(0)}{(\gamma(0)\pi\varepsilon)^2+\cdot^2}\right\|_{L^{1}(I)} \\
& \quad\leq  \left\|\frac{\varepsilon (\lambda_{\varepsilon}-1)\gamma}{\lambda_{\varepsilon}-(1-\varepsilon)+ \cdot^2}\right\|_{L^{1}(I)}
+ \left\|\frac{\varepsilon \gamma}{\lambda_{\varepsilon}-(1-\varepsilon)+\cdot^2}- \frac{\varepsilon \gamma}{(\gamma(0)\pi\varepsilon)^2+\cdot^2}\right\|_{L^{1}(I)}\\
&\qquad+\left\|\frac{\varepsilon (\gamma-\gamma(0))}{(\gamma(0)\pi\varepsilon)^2+\cdot^2}\right\|_{L^{1}(I)}.
\end{align*}
%
Let us bound the three terms. For the first one we have, by Proposition \ref{proposition2},
$$
\begin{array}{rcl}
\left\|\frac{\varepsilon (\lambda_{\varepsilon}-1)\gamma}{\lambda_{\varepsilon}-(1-\varepsilon)+\cdot^2}\right\|_{L^{1}(I)}& \leq & (\lambda_{\varepsilon}-1)\|\gamma\|_{\infty}\int_{\mathbb{R}}\frac{\varepsilon \,dx}{\frac{(\gamma(0)\pi\varepsilon)^2}{2}+x^2} \\ \\

& = & (\lambda_{\varepsilon}-1)\|\gamma\|_{\infty}\frac{\sqrt{2}}{\gamma(0)}= O(\varepsilon).
\end{array}
$$
For the second one, by Proposition \ref{proposition2} and \eqref{ine1},
\begin{align*}
&\left\|\frac{\varepsilon \gamma}{\lambda_{\varepsilon}-(1-\varepsilon)+\cdot^2}- \frac{\varepsilon \gamma}{(\gamma(0)\pi\varepsilon)^2+\cdot^2}\right\|_{L^{1}(I)}  \\ 
&\quad \leq  |(\gamma(0)\pi\varepsilon)^2-(\lambda_{\varepsilon}-(1-\varepsilon))| \varepsilon \|\gamma\|_{\infty}\int_{\mathbb{R}}\frac{\,dx}{\Big(\frac{(\gamma(0)\pi\varepsilon)^2}{2}+x^2\Big)^{2}}\\ 
&\quad = \left|(\gamma(0)\pi\varepsilon)^2-(\lambda_{\varepsilon}-(1-\varepsilon))\right| \frac{K_0}{\varepsilon^2}=O\left(\varepsilon \ln{\frac{1}{\varepsilon}}\right).
\end{align*}

For the third one, similarly to the proof of Proposition \ref{proposition2}, denoting by $A:= \frac{\|\gamma\|_{\infty}}{\|\gamma'\|_{\infty}}$,
\begin{eqnarray*}
\left\|\frac{\varepsilon (\gamma-\gamma(0))}{(\gamma(0)\pi\varepsilon)^2+\cdot^2}\right\|_{L^{1}(I)}&\leq& 2 \varepsilon \int_{0}^{A} \frac{\|\gamma'\|_{\infty}x}{(\gamma(0)\pi\varepsilon)^2+x^2}\,dx+ 2 \varepsilon\int_{A}^{+\infty} \frac{\|\gamma\|_{\infty}}{(\gamma(0)\pi\varepsilon)^2+x^2}\,dx\\
&=& \varepsilon \|\gamma'\|_{\infty} \ln{(1+\frac{A^2}{(\gamma(0)\pi\varepsilon)^2})}+2 \frac{\|\gamma\|_{\infty}}{\gamma(0)\pi} \arctan{\Big(\frac{\gamma(0)\pi \varepsilon}{A}\Big)}\\
 & = & O\left(\varepsilon \ln{\frac{1}{\varepsilon}}\right).
\end{eqnarray*}

\subsection{Proof of Lemma \ref{lem6} }\label{app}


Let us consider the linear initial value problem
\begin{equation}
\left\{\begin{array}{rcl}
        \frac{\partial u(t,x)}{\partial t}& = & \tilde{A}_{\varepsilon}u(t,x)=(a_{\varepsilon}(x)-\lambda_{\varepsilon})\, u(t,x) +\varepsilon \gamma(x)\,\int_{I}u(t,y)\,dy,\\
\\ u(0,x)&=&u_{0}(x),
       \end{array}\right.
\end{equation}
where $a_{\varepsilon}(x):=1-\varepsilon-x^2$. Let us recall that
$s(\tilde{A}_{\varepsilon})=0$ and $\varepsilon\int_{I}
\frac{\gamma(x)}{\lambda_{\varepsilon}-a_{\varepsilon}(x)}\,dx=1$ (see Proposition \ref{proposition1}).
Applying the Laplace transform with respect to $t$ to the previous
equation, we obtain the identity
$$
\mu \,\mathcal{L}[u](\mu, x)-u_{0}(x)=
(a_{\varepsilon}(x)-\lambda_{\varepsilon})\,\mathcal{L}[u](\mu,x)+\varepsilon\,
\gamma(x)\,\int_{I}\mathcal{L}[u](\mu,y)\, \,dy,
$$
that is
\begin{equation}
\label{eq13}
\mathcal{L}[u](\mu,x)=\frac{u_{0}(x)}{\mu+\lambda_{\varepsilon}-a_{\varepsilon}(x)}+\frac{\varepsilon
\,\gamma(x)}{\mu+\lambda_{\varepsilon}-a_{\varepsilon}(x)}\int_{I}\mathcal{L}[u](\mu,
y)\,dy.
\end{equation}
Integrating (with respect to $x$), we obtain
$$
\int_{I}\mathcal{L}[u](\mu,x)\,dx=\frac{\int_{I}\frac{u_{0}(x)}{\mu+\lambda_{\varepsilon}-a_{\varepsilon}(x)}\,dx}
{1- \int_{I}\frac{\varepsilon
\gamma(x)}{\mu+\lambda_{\varepsilon}-a_{\varepsilon}(x)}\,dx}=\frac{\int_{I}\frac{u_{0}(x)}{\mu+\lambda_{\varepsilon}-a_{\varepsilon}(x)}\,dx}
{\varepsilon \mu
\int_{I}\frac{\gamma(x)}{(\lambda_{\varepsilon}-a_{\varepsilon}(x))(\mu+\lambda_{\varepsilon}-a_{\varepsilon}(x))}\,dx},
$$
where we have used, for the second equality, $\varepsilon\int_{I}
\frac{\gamma(x)}{\lambda_{\varepsilon}-a_{\varepsilon}(x)}=1$.
Substituting in \eqref{eq13},
 we get
\begin{equation}
 \label{eq14}
\mathcal{L}[u](\mu,x)=\frac{u_{0}(x)}{\mu+\lambda_{\varepsilon}-a_{\varepsilon}(x)}+
\frac{\int_{I}\frac{u_{0}(x)}{\mu+\lambda_{\varepsilon}-a_{\varepsilon}(x)}\,dx}
{\mu
\int_{I}\frac{\gamma(x)}{(\lambda_{\varepsilon}-a_{\varepsilon}(x))(\mu+\lambda_{\varepsilon}-a_{\varepsilon}(x))}\,dx}\frac{\gamma(x)}{(\mu+\lambda_{\varepsilon}-a_{\varepsilon}(x))}.
\end{equation}
This Laplace transform is analytic for Re $\mu >0$ (note that
$\lambda_{\varepsilon}-a_{\varepsilon}(x)$ is positive and tends to
zero when $\varepsilon$ tends to zero). Then, for $s>0$, we know, by
the inversion theorem, that
$$
u(t,x)=\frac{1}{2\pi
i}\int_{s-i\infty}^{s+i\infty}\mathcal{L}[u](\mu,x)\,e^{\mu t}\,
\,d\mu.
$$
Using the theorem of residues,  we can shift the integration path to
the left in order to obtain, for any $s' \in
(1-\varepsilon-\lambda_{\varepsilon},0),$
$$
u(t,x)=\text{Res}_{\mu=0} \Big(\mathcal{L}[u](\mu, x)e^{\mu
t}\Big)+\frac{1}{2\pi
i}\int_{s'-i\infty}^{s'+i\infty}\mathcal{L}[u](\mu,x)e^{\mu\, t}\,
\,d\mu,
$$
where
$$
\begin{array}{rcl}
\text{Res}_{\mu=0}\Big(\mathcal{L}[u](\mu, x)e^{\mu
t}\Big)&=& \lim_{\mu \to 0} \mu \mathcal{L}[u](\mu, x)\\ \\
&=& \lim_{\mu \to 0} \left(\frac{\mu
u_{0}(x)}{\mu+\lambda_{\varepsilon}-a_{\varepsilon}(x)}+\frac{
\int_{I}\frac{u_{0}(x)}{\mu+\lambda_{\varepsilon}-a_{\varepsilon}(x)}\,{d}x}
{\int_{I}\frac{\gamma(x)}{(\lambda_{\varepsilon}-a_{\varepsilon}(x))(\mu+\lambda_{\varepsilon}
-a_{\varepsilon}(x))}\,{d}x}\,\frac{\gamma(x)}{\mu+\lambda_{\varepsilon}-a_{\varepsilon}(x)}\right)\\ \\
&=& \frac{\langle u_0, \psi_{\varepsilon}^{*}\rangle}{\langle
\psi_{\varepsilon},
\psi_{\varepsilon}^{*}\rangle}\,\psi_{\varepsilon}(x)=c_{u_0}\,\psi_{\varepsilon}(x)
\end{array}
$$

(let us recall that $\psi_{\varepsilon}(x)=\frac{\varepsilon
\gamma(x)}{\lambda_{\varepsilon}-a_{\varepsilon}(x)}$ and
$\psi_{\varepsilon}^{*}(x)= \frac{\left(\varepsilon
\int_{I}\frac{\gamma(x)\,dx}{(\lambda_{\varepsilon}-(1-\varepsilon-x^2))^2}\right)^{-1}}{\lambda_{\varepsilon}-(1-\varepsilon-x^2)}$).

Thus, we obtain that, for $s' \in
\left(1-\varepsilon-\lambda_{\varepsilon},0\right)$,
\begin{equation}
\label{eq14bis}
 u(t,x)=c_{u_0}\psi_{\varepsilon}(x)+ \frac{1}{2\pi
}\int_{-\infty}^{+\infty}\mathcal{L}[u](s'+i\tau, x)\,
e^{(s'+i\tau)\, t}\, \,d\tau .
\end{equation}
We now define
$g_{\varepsilon}(\mu):=\frac{\int_{I}\frac{u_{0}(x)\,dx}{\mu+\lambda_{\varepsilon}-a_{\varepsilon}(x)}}
{\mu
\int_{I}\frac{\gamma(x)\,dx}{(\lambda_{\varepsilon}-a_{\varepsilon}(x))(\mu+\lambda_{\varepsilon}-a_{\varepsilon}(x))}},
$ so that we can write
\begin{equation}
\label{eq14bisbis}
\begin{array}{rcl}
\frac{1}{2\pi
}\int_{-\infty}^{+\infty}\mathcal{L}[u](s'+i\tau,x)e^{(s'+i\tau) t}
\,d\tau&=& \frac{1}{2\pi}
u_0(x)e^{s't}\int_{-\infty}^{\infty}\frac{e^{i\tau
t}}{s'+\lambda_{\varepsilon}-a_{\varepsilon}(x)+i
\tau}\,d\tau\\ \\ &+&
\frac{1}{2\pi} \gamma(x)e^{s't}\int_{-\infty}^{\infty}\frac{g_{\varepsilon}(s'+i \tau)e^{i\tau t}}{s'+\lambda_{\varepsilon}-a_{\varepsilon}(x)+i \tau}\,d\tau\\ \\
&=&e^{-(\lambda_{\varepsilon}-a_{\varepsilon}(x))t}u_{0}(x)\\ \\
&+&\frac{1}{2\pi}
\gamma(x)e^{s't}\int_{-\infty}^{\infty}\frac{g_{\varepsilon}(s'+i
\tau)e^{i\tau t}}{s'+\lambda_{\varepsilon}-a_{\varepsilon}(x)+i
\tau}\,d\tau,
\end{array}
\end{equation}
where we used the estimate
$s'+\lambda_{\varepsilon}-a_{\varepsilon}(x)>0$ and the identity
$\int_{-\infty}^{\infty}\frac{e^{i \tau t}}{\alpha + i
\tau}\,d\tau=2 \pi e^{-\alpha t}$ (for $\alpha >0$).
\par
We now would like to find a bound for $\left\|\frac{1}{2\pi}
\gamma(x)e^{s't}\int_{-\infty}^{\infty}\frac{g_{\varepsilon}(s'+i
\tau)e^{i\tau t}}{s'+\lambda_{\varepsilon}-a_{\varepsilon}(x)+i
\tau}\,d\tau\right\|_{\infty}$. \\We see that
\begin{equation}
\label{eq15}
\begin{array}{rcl}
 \left\|
\gamma(x)\,e^{s't}\int_{-\infty}^{\infty}\frac{g_{\varepsilon}(s'+i
\tau)e^{i\tau t}}{s'+\lambda_{\varepsilon}-a_{\varepsilon}(x)+i
\tau}\,d\tau\right\|_{\infty}&\leq& e^{s't}
\|\gamma\|_{\infty}\sup_{x}\Big|\int_{-\infty}^{\infty}\frac{g_{\varepsilon}(s'+i
\tau)e^{i\tau t}}{s'+\lambda_{\varepsilon}-a_{\varepsilon}(x)+i
\tau}\,d\tau \Big|\\ \\ &=& e^{s't}
\sup_{x}\Big|\int_{-\infty}^{\infty}\frac{g_{\varepsilon}(s'+i
\tau)e^{i\tau t}}{s'+\lambda_{\varepsilon}-a_{\varepsilon}(x)+i
\tau}\,d\tau \Big|
\end{array}
\end{equation}
and
\begin{equation}
\label{eq16}
\begin{array}{rcl}
\Big|\int_{-\infty}^{\infty}\frac{g_{\varepsilon}(s'+i \tau)e^{i\tau
t}}{s'+\lambda_{\varepsilon}-a_{\varepsilon}(x)+i \tau}\,d\tau
\Big| &\leq & \int_{-\infty}^{\infty}\frac{|g_{\varepsilon}(s'+i
\tau)|}{|s'+\lambda_{\varepsilon}-a_{\varepsilon}(x)+i
\tau|}\,d\tau\\ \\ & \leq &
\int_{-\infty}^{\infty}\frac{|g_{\varepsilon}(s'+i
\tau)|}{|s'+\lambda_{\varepsilon}-(1-\varepsilon)+i
\tau|}\,d\tau
\end{array}
\end{equation}
since $|s'+\lambda_{\varepsilon}-a_{\varepsilon}(x)+i \tau| \geq
|s'+\lambda_{\varepsilon}-(1-\varepsilon)+i \tau|$.
\par
Let us then find an upper bound for $g_{\varepsilon}(s'+i\tau)$. For
the numerator of $g_{\varepsilon}(s'+i\tau)$ we can estimate
$$
\left|\int_{I}\frac{u_{0}(x)}{s'+i\tau+\lambda_{\varepsilon}-a_{\varepsilon}(x)}\,dx\right|
\leq \frac{\|u_{0}\|_{1}}{|s'+i \tau +
\lambda_{\varepsilon}-(1-\varepsilon)|}.
$$
We now find a lower bound for the denominator of
$g_{\varepsilon}(s'+i\tau)$. We use the elementary estimate  $|z|
\geq \max (|\text{Re}z|, |\text{Im}z|) $ and we start with the real
part.
$$
\begin{array}{rcl}
 \Big|\text{Re}\int_{I}\frac{\gamma(x)}{(\lambda_{\varepsilon}-a_{\varepsilon}(x))(s'+i\tau+\lambda_{\varepsilon}-a_{\varepsilon}(x))}\,dx\Big|&=&
 \Big|\int_{I}\frac{\gamma(x)}{(\lambda_{\varepsilon}-a_{\varepsilon}(x))}\frac{s'+\lambda_{\varepsilon}-a_{\varepsilon}(x)}{|s'+i\tau+\lambda_{\varepsilon}-a_{\varepsilon}(x)|^2}\,dx\Big| \\ \\&=&
\int_{I}\frac{\gamma(x)}{(\lambda_{\varepsilon}-a_{\varepsilon}(x))}\frac{s'+\lambda_{\varepsilon}-a_{\varepsilon}(x)}{|s'+i\tau+\lambda_{\varepsilon}-a_{\varepsilon}(x)|^2}\,dx
\\ \\&=&
\int_{I}\frac{\gamma(x)}{(\lambda_{\varepsilon}-a_{\varepsilon}(x))\big(s'+\lambda_{\varepsilon}-a_{\varepsilon}(x)+\frac{\tau^2}{s'+\lambda_{\varepsilon}-a_{\varepsilon}(x)}\big)}\,dx\\ \\
& \geq &
\int_{I}\frac{\gamma(x)}{(\lambda_{\varepsilon_{0}}-(1-{\varepsilon_{0}})+x^2)\big(\lambda_{\varepsilon_{0}}-(1-{\varepsilon_{0}})+x^2+\frac{\tau^2}{x^2}\big)}\,dx
\\ & = &
\int_{I}\frac{x^2\gamma(x)}{(\lambda_{\varepsilon_{0}}-(1-{\varepsilon_{0}})+x^2)((\lambda_{\varepsilon_{0}}-(1-{\varepsilon_{0}})+x^2)x^2+\tau^2)}\,dx\\
\\ & =:&F(\tau),
\end{array}
$$
where in the last inequality we used the estimates
$s'+\lambda_{\varepsilon}-a_{\varepsilon}(x)<
\lambda_{\varepsilon}-a_{\varepsilon}(x)$,
$s'+\lambda_{\varepsilon}-(1-\varepsilon)>0$. We also used that,
since $\lambda_{\varepsilon}-(1-\varepsilon)$ is strictly positive
and tends to zero when $\varepsilon$ goes to zero,
there exists $\varepsilon_0$ such that $\forall \varepsilon < \varepsilon_0$ we have $\lambda_{\varepsilon_{0}}-(1-{\varepsilon_{0}})> \lambda_{\varepsilon}-(1-{\varepsilon})$.\\
In a similar way, for the imaginary part,
$$
\begin{array}{rcl}
 \Big|\text{Im}\int_{I}\frac{\gamma(x)}{(\lambda_{\varepsilon}-a_{\varepsilon}(x))(s'+i\tau+\lambda_{\varepsilon}-a_{\varepsilon}(x))}\,dx\Big|&=&
\Big|\int_{I}\frac{\gamma(x)}{(\lambda_{\varepsilon}-a_{\varepsilon}(x))}\frac{-\tau}{(s'+\lambda_{\varepsilon}-a_{\varepsilon}(x))^2+
\tau^2}\,dx\Big|\\ \\ &=& | \tau |
\int_{I}\frac{\gamma(x)}{(\lambda_{\varepsilon}-a_{\varepsilon}(x))\big((s'+\lambda_{\varepsilon}-a_{\varepsilon}(x))^2+
\tau^2\big)}\,dx \\ \\ & \geq & | \tau |
\int_{I}\frac{\gamma(x)}{(\lambda_{\varepsilon_0}-(1-\varepsilon_0)+x^2)\big((\lambda_{\varepsilon_0}-(1-\varepsilon_0)+x^2)^2+
\tau^2\big)}\,dx \\ \\ &=:& G(\tau).
\end{array}
$$
Defining $H(\tau):=\max(F(\tau),G(\tau))$ we see that
\begin{equation}
 \label{eq17}
|g_{\varepsilon}(s'+i \tau)| \leq \frac{\frac{\|u_0\|_{1}}{|s'+i \tau
+\lambda_{\varepsilon}-(1-\varepsilon)|}}{|s'+i \tau| H(\tau)},
\end{equation}
and then, using \eqref{eq15}, \eqref{eq16} and \eqref{eq17}
\begin{align*}
&\left|\left|
\gamma(x)e^{s't}\int_{-\infty}^{+\infty}\frac{g_{\varepsilon}(s'+i
\tau)e^{i\tau t}}{s'+\lambda_{\varepsilon}-a_{\varepsilon}(x)+i
\tau}\,d\tau\right|\right|_{\infty} \\ 
&\quad \leq e^{s't}\int_{-
\infty}^{+\infty} \frac{\,d\tau}{\sqrt{s'^{2}+ \tau^{2}}|s'+i
\tau + \lambda_{\varepsilon}-(1-\varepsilon)|^2 H(\tau)}\|u_0\|_{1}.
\end{align*}

Now, since $F$ and $G$ are strictly positive continuous functions,
$F(0)>0$ and $\tau G(\tau)$ tends to a positive limit when $\tau$
goes to $\infty$, there exists a  constant $C>0$ (independent of
$\varepsilon$) such that $H(\tau) \geq \frac{C}{1+\tau}$. Choosing
$s'= -\frac{\alpha_{\varepsilon}}{2}$, where $\alpha_{\varepsilon} =
\lambda_{\varepsilon}-(1-\varepsilon),$ we can write

\begin{eqnarray*}
\|\gamma(x)\,e^{s't}\,\int_{-\infty}^{+\infty}\frac{g_{\varepsilon}(s'+i
\tau)e^{i\tau t}}{s'+\lambda_{\varepsilon}-a_{\varepsilon}(x)+i
\tau}\,d\tau\|_{\infty}  &\leq& \frac{e^{-
\alpha_{\varepsilon}t}}{C}\int_{0}^{+\infty}\frac{2(1+\tau)}{\big((\frac{\alpha_{\varepsilon}}{2})^2+\tau^2\big)^\frac{3}{2}}\,d\tau\|u_{0}\|_{1}\\
&=&
\frac{e^{-\frac{\alpha_{\varepsilon}t}{2}}}{C}\left(\frac{8}{\alpha_{\varepsilon}^2}+\frac{4}{\alpha_{\varepsilon}}\right)
 \|u_{0}\|_1.
\end{eqnarray*}


Finally, going back to \eqref{eq14bis} and using \eqref{eq14bisbis},
we end up with
$$
\|u((, \cdot)-c_{u_{0}}\psi_{\varepsilon}\| \leq
\left(1+\frac{1}{\pi
C}\left(\frac{4}{\alpha_{\varepsilon}^2}+\frac{2}{\alpha_{\varepsilon}}\right)
\right)\, e^{-\frac{\alpha_{\varepsilon}t}{2}}\,\|u_{0}\|_1\leq
K_5\,\varepsilon^{-4}e^{-\frac{\alpha_{\varepsilon}t}{2}}\,\|u_{0}\|_1.
$$

\section{Proof of Theorem~\ref{thm:smallt}}

We start here the proof of Theorem~\ref{thm:smallt}. From now on, $C$ will  designate a
strictly  positive constant depending only on some upper bounds on $\|\gamma\|_{L^\infty}$, $\|f(0,\cdot)\|_{W^{1,\infty}}$, a lower bound on $f(0,0)$ (see Remark~\ref{rem:C}), and on $|b-a|$.
\medskip

 Thanks to the variation of the constant formula, the solution $f$ of (\ref{eqq0}) satisfies:
\begin{eqnarray}
 f(t,x)&=&f(0,x)\, e^{(1-\varepsilon-x^2)\,t-\int_0^t\int_I f(s,y)\,dy\,ds}\nonumber\\
&&+\, \varepsilon \int_0^t\left(\gamma(x)\int_I f(s,y)\,dy\right)e^{(1-\varepsilon-x^2)(t-s)-\int_s^t\int_I f(\sigma,y)\,dy\,d\sigma}\,ds\nonumber\\
&=&f(0,x)\, e^{(1-\varepsilon-x^2)\,t-\int_0^t\gr{\mathcal I}(s)\,ds}\nonumber\\
&&+\,\varepsilon \int_0^t\gr{\left(\gamma(x)\, \mathcal I(s) \right)} e^{(1-\varepsilon-x^2)(t-s)-\int_s^t {\mathcal I}(\sigma)\,d\sigma}\,ds,\label{eq:varconst}
\end{eqnarray}
where
\begin{equation*}
\mathcal I(t):=\int_I f(t,y)\,dy.
\end{equation*}
Obtaining a precise estimate on $t\mapsto e^{(1-\varepsilon)(t-s)-\int_s^t\mathcal I(\sigma)\,d\sigma}$ is the key to prove Theorem~\ref{thm:smallt}.

\subsection{Preliminary estimates}

If we sum \eqref{eq:varconst} along $x\in\mathbb R$, we get, for $t\geq0$:
\begin{eqnarray}
 \mathcal I(t)&=&\left(\int_I f(0,x)\, e^{-x^2t}\,dx\right) e^{(1-\varepsilon)t-\int_0^t\mathcal I(s)\,ds}\nonumber\\
&&+\,\varepsilon \int_0^t\left(\int_I\int_I\gamma(x) f(s,y)e^{-x^2(t-s)}\,dx\,dy\right)e^{(1-\varepsilon)(t-s)-\int_s^t\mathcal I(\sigma)\,d\sigma}\,ds\nonumber\\
&=&\frac{z_1(t)}{\sqrt{t}} e^{(1-\varepsilon)t-\int_0^t\mathcal I(s)\,ds}+\varepsilon \int_0^t\frac{z_2(s,t-s)}{\sqrt{t-s}}e^{(1-\varepsilon)(t-s)-\int_s^t\mathcal I(\sigma)\,d\sigma}\,ds,\label{eq:varconst2}
\end{eqnarray}
where
\[z_1(t):=\sqrt{t} \int_I f(0,x)\,e^{-x^2t}\,dx,\quad z_2(\sigma,\tau)=\sqrt{\tau}\int_I\int_I \gamma(x) \,f(\sigma,y)\, e^{-x^2\tau}\,dx\,dy.\]

If we differentiate $\mathcal I$ with respect to $t$, we get
\begin{eqnarray*}
 \frac{\partial \mathcal I}{\partial t}(t)&=&\mathcal I(t)\left(1-\varepsilon-\mathcal I(t)\right)-\int_I x^2f(t,x)\,dx+\varepsilon\int_I\int_I \gamma(x)f(t,y)\,dx\,dy\\
&\leq&\mathcal I(t)\left(1-\varepsilon -\mathcal I(t)\right)+\varepsilon\, \mathcal I(t)\\
&\leq&\mathcal I(t)\left(1-\mathcal I(t)\right),
\end{eqnarray*}
which implies, since $\mathcal I(0)\leq 1$, that
\begin{equation}\label{alphabound}
 0\leq \mathcal I(t)\leq 1.
\end{equation}
Thanks to \eqref{eq:varconst2}, \eqref{alphabound} and the nonnegativity of $z_1,\,z_2$, one gets
\begin{equation}\label{estz1}
 \frac{z_1(t)}{\sqrt{t}} e^{(1-\varepsilon)t-\int_0^t\mathcal I(s)\,ds}\leq C,
\end{equation}
while for some constants $C,\,C'>0$,
\begin{eqnarray*}
 z_1(t)&=&\int_I  f\left(0,\frac x {\sqrt{t}}\right)e^{-x^2}\,dx\\
&\geq&\frac 1C\int_{-C'}^{C'}f\left(0,\frac x {\sqrt{t}}\right)\,dx\geq \frac 1{C},
\end{eqnarray*}
for $t\geq 1$. Note that here we used a lower bound on $f(0,\cdot)$ around $x=0$ (\gr{we have assumed} that $f(0,0)>0$ and that  $f(0,\cdot)$ is continuous). \gr{Thanks to this lower bound,} \eqref{estz1} becomes
\begin{equation}\label{estexpalpha}
 e^{(1-\varepsilon)t-\int_0^t\mathcal I(s)\,ds}\leq C\,\sqrt t .
\end{equation}
Thanks to \eqref{estexpalpha} and \eqref{alphabound}, we can estimate the second term of \eqref{eq:varconst2} as follows:
\begin{eqnarray}
 w(t)&:=&\varepsilon \int_0^t\frac{z_2(s,t-s)}{\sqrt{t-s}}\gr{e^{(1-\varepsilon)\,t-\int_0^t\mathcal I(\sigma)\,d\sigma}e^{\varepsilon s+\int_0^s\left(\mathcal I(\sigma)-1\right)\,d\sigma}}\,ds\nonumber\\
&\leq&C\,\varepsilon\,\sqrt t\, \|z_2\|_{L^\infty(I)}
\int_0^t\frac{e^{C\varepsilon s}}{\sqrt{t-s}}\,ds\nonumber\\
&\leq&C\,\varepsilon\,\sqrt t\,\|z_2\|_{L^\infty(I)}\, e^{C\varepsilon t} \int_0^t\frac{e^{\gr{-}C\varepsilon s}}{\sqrt{s}}\,ds\leq C\, \varepsilon t\, \|z_2\|_{L^\infty(I)}\, e^{C\varepsilon t}.\label{def:w}
\end{eqnarray}
In order to estimate $\|z_2\|_{L^\infty(I)}$, we proceed as follows:
\begin{eqnarray*}
 z_2(s,\tau)&=&\sqrt{\tau}\int_I\int_I \gamma\left(\frac x{\sqrt \tau}\right)\, f(s,y)\,e^{-x^2\tau}\,dx\,dy\\
&\leq&\frac {C\,\mathcal I(s)}{\sqrt \tau}\,\int_I e^{-x^2\,\tau}\,dx \leq C.
\end{eqnarray*}
This estimate combined with \eqref{def:w} implies that $w(t)\geq 0$ satisfies
\begin{equation}\label{eq:est4}
\gr{w(t)\leq C\,\varepsilon \,t \,e^{C\,\varepsilon\, t},}
\end{equation}
Since $f(0,\cdot)\in W^{1,\infty}(I)$, we can estimate
\begin{eqnarray}
 z_1(t)&=&\int_I \left(f(0,0)+\int_0^{\frac x{\sqrt t}}\frac{\partial f}{\partial x}\left(0,z\right)\,dz\right)
\,e^{-x^2}\,dx\nonumber\\
&=&f(0,0)\, \int_I e^{-x^2}\,dx  +\lambda(t),\label{def:lambda}
\end{eqnarray}
where
\begin{eqnarray}
 |\lambda(t)|&\leq& \int_I \bigg|\int_0^{\frac x{\sqrt t}}\frac{\partial f}{\partial x}(0,z)\,dz\bigg|e^{-x^2}\,dx\leq\frac{C}{\sqrt t}\int_I |x|e^{-x^2}\,dx\nonumber\\
&\leq&\frac C{\sqrt t}.\label{estlambda}
\end{eqnarray}

\subsection{Estimation of $e^{(1-\varepsilon) t-\int_0^t\mathcal I(s)\,ds}$}

Thanks to \eqref{eq:varconst2} (and the definition of $\lambda$ and $w$: see \eqref{def:lambda} and \eqref{def:w} respectively), we see that
\begin{equation}\label{eq:alpha}
\mathcal I(t)=\frac{f(0,0)\, \int_I e^{-x^2}\,dx  +\lambda(t)}{\sqrt t}e^{(1-\varepsilon)t-\int_0^t\mathcal I(s)\,ds}+w(t),
\end{equation}
so that
\begin{eqnarray}
 e^{\int_0^t\mathcal I(s)\,ds}&=&e^{\int_0^1\mathcal I(s)\,ds}+\int_1^t\frac{d}{ds}\left(e^{\int_0^s\mathcal I(\sigma)\,d\sigma}\right)(s)\,ds\nonumber\\
&=&e^{\int_0^1\mathcal I(s)\,ds}+\int_1^t\frac{f(0,0)\, \int_I e^{-x^2}\,dx }{\sqrt s}e^{(1-\varepsilon)s}\,ds\nonumber\\
&&+\int_1^t\frac{\lambda(s)}{\sqrt s}e^{(1-\varepsilon)s}\,ds+\int_1^tw(s)e^{\int_0^s\mathcal I(\sigma)\,d\sigma}\,ds .
\label{eq:intalpha}
\end{eqnarray}
We will now estimate each of the terms on the right hand side of \eqref{eq:intalpha}. We start by estimating the third term on the right hand side, thanks to \gr{\eqref{estlambda}} and an integration by parts:
\begin{eqnarray}
\left|\int_1^t\frac{\lambda(s)}{\sqrt s}e^{(1-\varepsilon)s}\,ds\right|&\leq& C\int_1^t\frac{e^{(1-\varepsilon)s}}s\,ds\nonumber\\
&\leq& C\left[\frac{e^{(1-\varepsilon)t}}{(1-\varepsilon)t}+\int_1^t\frac{e^{(1-\varepsilon)s}}{(1-\varepsilon)s^2}\,ds\right]\nonumber\\
&\leq& \frac{C}{1-\varepsilon}\left[\frac{e^{(1-\varepsilon)t}}{t}+t\max_{s\in [1,t]}\frac{e^{(1-\varepsilon)s}}{s^2}\right]\nonumber\\
&\leq& \frac{2C}{(1-\varepsilon)t}e^{(1-\varepsilon)t},\label{eq:esttruc}
\end{eqnarray}
provided $t>0$ is large enough, and $\varepsilon>0$ is small enough (to ensure that $\max_{s\in [1,t]}\frac{e^{(1-\varepsilon)s}}{s^2}=\frac{e^{(1-\varepsilon)t}}{t^2}$).
\par
We now estimate the second term on the right hand side of \eqref{eq:intalpha}, using an integration by parts:
\begin{align*}
&\int_1^t\frac{f(0,0)\,\int_I e^{-x^2}\,dx }{\sqrt{s}}e^{(1-\varepsilon)s}\,ds \\
&\quad=f(0,0)\,\gr{\left(\int_I e^{-x^2}\,dx\right)}\,\left(\frac{e^{(1-\varepsilon)t}}{(1-\varepsilon)\sqrt{t}}-\frac {e^{1-\varepsilon}}{1-\varepsilon}+\int_1^t\frac{e^{(1-\varepsilon)s}}{2(1-\varepsilon)s^{3/2}}\,ds\right),
\end{align*}
and then, applying an estimate similar to the one used to obtain \eqref{eq:esttruc}, we get, provided that $t>0$ is large enough, and that $\varepsilon>0$ is small enough,
\begin{equation}\label{eq:est1st-term}
0\leq \int_1^t\frac{e^{(1-\varepsilon)s}}{2\,(1-\varepsilon)s^{3/2}}\,ds\leq \int_1^t\frac{e^{(1-\varepsilon)\,s}}{2\,(1-\varepsilon)\,s}\,ds
\leq\frac{1}{(1-\varepsilon)^2\,t}e^{(1-\varepsilon)\,t}.
\end{equation}
Finally, we estimate the last term of the right hand side of \eqref{eq:intalpha}, thanks to estimates \eqref{eq:est4} and \eqref{alphabound}:
\begin{eqnarray}
0\leq \int_1^tw(s)e^{\int_0^s\mathcal I(\sigma)\,d\sigma}\,ds&\leq& \int_1^t \gr{|w(s)|} e^{\|\mathcal I\|_{L^\infty(\mathbb R_+)} s}\,ds\nonumber\\
&\leq&C\,\varepsilon\int_1^t s\, e^{C\varepsilon s}\,e^{s
}\,ds\nonumber\\
&\leq& C\,\varepsilon\frac{t\,e^{\left(1+C\,\varepsilon\right)\,t}}{1+C\,\varepsilon},\label{eq:est3rd-term}
\end{eqnarray}
where we have used an integration by part to obtain the last inequality.
\par
Combining these estimates, estimate \eqref{eq:intalpha} becomes:
\begin{equation}\label{eq:expalpha1}
e^{\int_0^t\mathcal I(s)\,ds-(1-\varepsilon)t}=\frac{f(0,0)\,\int_I e^{-x^2}\,dx}{(1-\varepsilon)\sqrt t}+\mu(t),
\end{equation}
or
\begin{equation}\label{eq:expalpha2}
e^{(1-\varepsilon)t-\int_0^t\mathcal I(s)\,ds}=\left(\frac{f(0,0)\, \int_I e^{-x^2}\,dx}{(1-\varepsilon)\sqrt t}+\mu(t)\right)^{-1},
\end{equation}
where, thanks to \eqref{eq:intalpha}, \eqref{eq:esttruc}, \eqref{eq:est1st-term} and \eqref{eq:est3rd-term}, for $t\geq 1$,
\begin{equation}\label{eq:estmu}
-\frac C t\leq\mu(t)\leq C\left(\frac 1t+\varepsilon t e^{C\varepsilon t}\right).
\end{equation}

\medskip

\subsection{\gr{Estimation of} $\left|e^{(1-\varepsilon)t-\int_0^t\mathcal I(s)\,ds}-\frac{\sqrt t}{f(0,0)\, \int_I e^{-x^2}\,dx}\right|$}

Thanks to \eqref{eq:expalpha2},
\begin{align}
&\left|e^{(1-\varepsilon)t-\int_0^t\mathcal I(s)\,ds}-\frac{\sqrt t}{f(0,0)\,\int_I e^{-x^2}\,dx}\right|\nonumber\\
&\quad =\left|\left(\frac{f(0,0)\, \int_I e^{-x^2}\,dx}{(1-\varepsilon)\sqrt t}+\mu(t)\right)^{-1}-\frac{\sqrt t}{f(0,0)\, \int_I e^{-x^2}\,dx}\right|\\
&\quad=\frac{\sqrt t}{f(0,0)\, \int_I e^{-x^2}\,dx}\left|\left(\frac 1{1-\varepsilon}+\frac{\mu(t)\,\sqrt t}{f(0,0)\, \int_I e^{-x^2}\,dx}\right)^{-1}-1\right|.\label{eq:expalpha3}
\end{align}

We notice that thanks to estimate \eqref{eq:estmu},
\begin{equation}\label{eq:est1}
f(0,0)\,\int_I e^{-x^2}\,dx+(1-\varepsilon)\mu(t) \sqrt t\geq \frac {f(0,0)\,\int_I e^{-x^2}\,dx}2,
\end{equation}
as soon as $t\geq T$, for some large time $T>0$. Also, for $t\geq 1$,
\begin{equation}\label{eq:est2}
|\mu(t)|\leq C\left(\frac 1t+\varepsilon \,t\,e^{C\,\varepsilon\, t}\right).
\end{equation}
Using the bounds \eqref{eq:est1} and \eqref{eq:est2}, we can show that as soon as $t\geq T$,
\begin{eqnarray}
\left|\left(\frac 1{1-\varepsilon}+\frac{\mu(t)\sqrt t}{f(0,0)\,\int_I e^{-x^2}\,dx}\right)^{-1}-1\right|
&=&\left|\frac{-\varepsilon f(0,0)\,\int_I e^{-x^2}\,dx-(1-\varepsilon)\mu(t) \sqrt t}{f(0,0)\, \int_I e^{-x^2}\,dx+(1-\varepsilon)\mu(t) \sqrt t}\right|\nonumber\\
&\leq&C\left(\frac 1{\sqrt t}+\varepsilon t^{\frac 32}\,e^{C\,\varepsilon \, t}\right),\label{eq:est3}
\end{eqnarray}
so that identity  \eqref{eq:expalpha3} leads to the bound
\begin{equation}
\left|e^{(1-\varepsilon)t-\int_0^t\mathcal I(s)\,ds}-\frac{\sqrt t}{f(0,0)\,\int_I e^{-x^2}\,dx}\right|\leq C\left(1+\varepsilon \, t^{2}\, e^{C\,\varepsilon\, t}\right). \label{eq:est6}
\end{equation}
Notice also, as this is going to be useful further on, that for $s\geq 1$, thanks to \eqref{eq:expalpha1} and \eqref{eq:est2},
\begin{eqnarray}
\left|e^{\int_0^s\mathcal I(\sigma)\,d\sigma-(1-\varepsilon)s}-\frac{f(0,0)\, \int_I e^{-x^2}\,dx}{\sqrt s}\right|&=&\left|\mu(s)+\varepsilon\frac{f(0,0)\,\int_I e^{-x^2}\,dx}{(1-\varepsilon)\sqrt s}\right|\nonumber\\
&\leq& C\left(\frac 1{s}+\varepsilon \,s\,e^{C\,\varepsilon\, s}\right).\label{eq:est5}
\end{eqnarray}

\medskip

\subsection{Conclusion of the proof of Theorem~\ref{thm:smallt}}

In this last part of the proof, we consider times $t\geq T$. We estimate
\begin{align}
&\left\|f(t,x)-\frac{f(0,x)\,\sqrt t \,e^{-x^2 t}}{f(0,0)\,\int_I e^{-x^2}\,dx}\right\|_{L^1(I)}\nonumber\\
&\quad\leq \left\|f(t,x)-\frac{f(0,x)\,e^{-x^2 t}}{\frac{f(0,0)\,\int_I e^{-x^2}\,dx}{(1-\varepsilon)\, \sqrt t}+\mu(t)}\right\|_{L^1(I)}\nonumber\\
&\qquad +\, \Bigg\|\frac{f(0,x)\,e^{-x^2 t}}{\frac{f(0,0)\,\int_I e^{-x^2}\,dx}{(1-\varepsilon)\,\sqrt t}+\mu(t)}-\frac{f(0,x)\,\sqrt t\, e^{-x^2 t}}{f(0,0)\,\sqrt \pi}\Bigg\|_{L^1(I)}.\label{eq:diff}
\end{align}

Let us start by estimating the second term on the right hand side of \eqref{eq:diff}, thanks to
estimate \eqref{eq:est3}:
\begin{align}
&\Bigg\|\frac{f(0,x)e^{-x^2 t}}{\frac{f(0,0)\,\int_I e^{-x^2}\,dx}{(1-\varepsilon)\sqrt t}+\mu(t)}-\frac{f(0,x)\sqrt t e^{-x^2 t}}{f(0,0)\,\int_I e^{-x^2}\,dx}\Bigg\|_{L^1(I)}\nonumber\\
&\quad\leq \left\|\frac{f(0,x)\sqrt t e^{-x^2 t}}{f(0,0)\,\int_I e^{-x^2}\,dx}\left|\left(\frac 1 {1-\varepsilon}+\frac{\mu(t)\sqrt t}{f(0,0)\,\int_I e^{-x^2}\,dx}\right)^{-1}-1\right|\;\right\|_{L^1(I)}\nonumber\\
&\quad\leq \frac{f(0,x)\sqrt t }{f(0,0)\,\int_I e^{-x^2}\,dx}\left|\left(\frac 1 {1-\varepsilon}+\frac{\mu(t)\sqrt t}{f(0,0)\,\int_I e^{-x^2}\,dx}\right)^{-1}-1\right|\int_I e^{-x^2 t}\,dx\nonumber\\
&\quad \leq C \left(\frac 1{\sqrt t}+\varepsilon t^{\frac 32}\,e^{C\,\varepsilon \,t}\right)\label{eq:estprofil1}
\end{align}
We now rewrite the first term on the right hand side of \eqref{eq:diff}, using formula \eqref{eq:varconst} and \eqref{eq:expalpha2}:
\begin{align*}
&\left\|f(t,x)-\frac{f(0,x)e^{-x^2 t}}{\frac{f(0,0)\, \int_I e^{-x^2}\,dx}{(1-\varepsilon)\sqrt t}+\mu(t)}\right\|_{L^1(I)}\\
&\quad= \left\|\varepsilon \int_0^t\left(\int_I \gamma(x)f(s,y)\,dy\right)e^{(1-\varepsilon-x^2)(t-s)-\int_s^t\mathcal I(\sigma)\,d\sigma}\,ds\right\|_{L^1}\\
&\quad\leq C\varepsilon \int_I\int_0^t\mathcal I(s)e^{-x^2(t-s)}e^{(1-\varepsilon)t-\int_0^t\mathcal I(\sigma)\,d\sigma} e^{\int_0^s\mathcal I(\sigma)\,d\sigma-(1-\varepsilon)s}\,ds\,dx
\end{align*}
\gr{and then, thanks to \eqref{alphabound}, \eqref{eq:est6} and \eqref{eq:est5},
\begin{align}
&\left\|x \mapsto f(t,x)-\frac{f(0,x)e^{-x^2 t}}{\frac{f(0,0)\,\int_I e^{-x^2}\,dx}{(1-\varepsilon)\sqrt t}+\mu(t)}\right\|_{L^1(I)}\nonumber\\
&\quad \leq C\,\varepsilon \int_0^1\left(\int_I e^{-x^2(t-s)}\,dx\right) \left(\frac{\sqrt t}{f(0,0)\,\int_I e^{-x^2}\,dx}+1+\varepsilon\, t^{2}\, e^{C\,\varepsilon\, t}\right)\,ds\nonumber\\
&\qquad +C\varepsilon \int_1^t\left(\int_I e^{-x^2(t-s)}\,dx\right) \left(\frac{f(0,0)\, \int_I e^{-x^2}\,dx}{\sqrt s}+\frac 1{ s}+\varepsilon\, s \,
e^{C\,\varepsilon\, s}\right)\nonumber\\
&\phantom{dqsfesrgqdreg}\,\left(\frac{\sqrt t}{f(0,0)\,\int_I e^{-x^2}\,dx}+1+\varepsilon \,t^2\, e^{C\,\varepsilon \,t}\right)\,ds\nonumber\\
&\quad\leq C\,\varepsilon \,\frac 1{\sqrt t}\left(\sqrt t+1+\varepsilon\, t^{2}\, e^{C\,\varepsilon \,t}\right)\nonumber\\
&\qquad+C\varepsilon \int_1^t\frac 1{\sqrt {t-s}} \left(\frac 1{ \sqrt s}+\varepsilon\, s\, e^{C\,\varepsilon\, s}\right)\,\left(\sqrt t+1+\varepsilon \,t^2\, e^{C\,\varepsilon\, t}\right)\,ds.\nonumber
\end{align}
We estimate
\[\int_1^t\frac{s e^{C\varepsilon s}}{\sqrt{t-s}}\,ds\leq t e^{C\varepsilon t}\int_1^t\frac{ds}{\sqrt{t-s}}\leq Ct^{\frac 32} e^{C\varepsilon t},\]
and then
\begin{align}
&\left\| x \mapsto f(t,x)-\frac{f(0,x)e^{-x^2 t}}{\frac{f(0,0)\,\int_I e^{-x^2}\,dx}{(1-\varepsilon)\sqrt t}+\mu(t)}\right\|_{L^1(I)}\nonumber\\
&\quad\leq C\varepsilon \left(1+\frac 1{\sqrt t}+\varepsilon\, t^{\frac 3 2}\, e^{C\,\varepsilon\, t}\right)\nonumber\\
&\qquad+C\,\varepsilon \,\left(1+\varepsilon t^{\frac 32} e^{C\varepsilon t}\right)\, \left(\sqrt t+1+\varepsilon\, t^2 \,e^{C\,\varepsilon\, t}\right)\nonumber\\
&\quad\leq C\left(\varepsilon+\varepsilon\sqrt t+\frac \varepsilon{\sqrt t}+ \left(\varepsilon t^{\frac 32}+\varepsilon^2 t^2+\varepsilon^2t^{\frac 32}+\varepsilon^3t^{\frac 72}\right)e^{C\varepsilon t}\right)\nonumber\\
&\quad\leq C\left(\frac \varepsilon{\sqrt t}+ \varepsilon t^{\frac 32}e^{C\varepsilon t}\right),\label{eq:estprofil2}
\end{align}
where we have used the fact that $\varepsilon t\leq Ce^{C\varepsilon t}$. Thanks to \eqref{eq:estprofil1} and \eqref{eq:estprofil2}, \eqref{eq:diff} becomes:
\begin{equation*}
\Bigg\|x \mapsto f(t,x)-\frac{f(0,x)\sqrt t e^{-x^2 t}}{f(0,0)\,\int_I e^{-y^2}\,dy}\Bigg\|_{L^1(I)}\leq C \left(\frac 1{\sqrt t}+\varepsilon\, t^{\frac 3 2}\,e^{C\,\varepsilon\, t}\right).
\end{equation*}
Theorem~\ref{thm:smallt} follows from this estimate.}

\subsection{Proof of Corollary~\ref{cor:smallt}}

If we assume that $t\in\left[\frac 1{\kappa^2}, \kappa^{\frac 23} \varepsilon^{-\frac 23}\right]$, then \eqref{est:final-smallt} becomes

\begin{eqnarray*}
\Bigg\| x \mapsto f(t,x)-\frac{f(0,x)\sqrt t e^{-x^2 t}}{f(0,0)\, \int_I e^{-y^2}\,dy}\Bigg\|_{L^1(I)}&\leq&  C \left(\kappa+\kappa\,e^{C\, \kappa^{\frac 23} \varepsilon^{\frac 13}}
\right),
\end{eqnarray*}
%
%
and if furthermore $\varepsilon\leq \kappa\leq 1$, then
\begin{eqnarray*}
\Bigg\|x \mapsto f(t,x)-\frac{f(0,x)\,\sqrt t\, e^{-x^2 t}}{f(0,0)\, \int_I e^{-y^2}\,dy}\Bigg\|_{L^1(I)}&\leq& C\,\kappa,
\end{eqnarray*}
which proves Corollary~\ref{cor:smallt}\gr{, provided that $\kappa>0$ is small enough.}

\medskip

{\bf{Acknowledgement}}:  The research leading to this paper was funded by 
the French ``ANR blanche'' project Kibord: ANR-13-BS01-0004, and  ``ANR JCJC'' project MODEVOL ANR-13-JS01-0009. A.C. and S.C. were partially supported by the Ministerio de Ciencia e Innovación,
Grants MTM2011-27739-C04-02 and MTM2014-52402-C3-2-P.

\end{document}